\newtheorem{assumption}{Assumption}[section]
\newtheorem{aassumption}{}
\newtheorem{bproof}{}
\newtheorem{remark}{}
\begin{document}

\title{Convergence Rate of Stochastic Gradient Search
in the Case of Multiple and Non-Isolated Minima} 

\author{Vladislav B. Tadi\'{c}
\thanks{Department of Mathematics, University of Bristol,
University Walk, Bristol BS8 1TW, United Kingdom. 
(v.b.tadic@bristol.ac.uk). }}

\maketitle

\begin{abstract} The convergence rate of  
stochastic gradient search is analyzed in this paper. 
Using arguments based on differential geometry and Lojasiewicz inequalities, 
tight bounds on the convergence rate of 
general stochastic gradient algorithms 
are derived. 
As opposed to the existing results, 
the results presented in this paper 
allow the objective function to have multiple, non-isolated minima, 
impose no restriction on the values of the Hessian (of the objective function)
and do not require the algorithm estimates to have a single limit point. 
Applying these new results, 
the convergence rate of 
recursive prediction error identification algorithms  
is studied. 
The convergence rate of supervised and temporal-difference learning algorithms 
is also analyzed 
using the results derived in the paper. 
\end{abstract}

\begin{keywords} Stochastic gradient algorithms, rate of convergence, 
Lojasiewicz inequalities, system identification, recursive prediction error, 
ARMA models, 
machine learning, 
supervised learning, temporal-difference learning. 
\end{keywords}

\begin{AMS}
Primary 62L20; 
Secondary 90C15, 93E12, 93E35. 
\end{AMS}

\section{Introduction} 

Stochastic gradient algorithms are a recursive optimization 
method of 
the stochastic approximation type. 
This method is commonly used to compute minima (or maxima) of a function 
whose values are available only through noise-corrupted observations. 
It has found a wide range of applications 
in the areas such as automatic control, system identification, 
signal processing, machine learning, operations research, statistical inference, 
economics and management (to name a few). 
For further details, see \cite{caines}, \cite{ljung2}, \cite{ljung3}, \cite{pflug}, \cite{polyak}, \cite{powell}, \cite{spall} 
and the references cited therein. 

Due to their practical importance, 
the asymptotic behavior of stochastic gradient algorithms has been 
thoroughly studied in a large number of papers and books. 
A significant attention has been given to the rate of convergence, 
as this property directly characterizes the efficiency
and  
enables a construction of reliable stopping rules 
(see \cite{benveniste}, \cite{kushner&yin}, \cite{ljung2}, 
\cite{polyak}, \cite{spall} 
and the references given therein). 
Although the existing results on the convergence rate  
provide 
a good insight into the efficiency and asymptotic behavior   
of stochastic gradient algorithms, 
they hold under very restrictive conditions. 
More specifically, 
the existing results 
require the algorithm estimates to converge 
to an isolated minimum of the objective function
at which the Hessian 
(of the objective function)
is strictly positive definite. 
Unfortunately, such conditions are practically impossible 
to verify for complex, high-dimensional and high-nonlinear 
stochastic gradient algorithms. 

In this paper, 
the rate of convergence of stochastic gradient algorithms 
is analyzed for the case when the objective function has multiple, 
non-isolated minima
(note that the Hessian can be only semi-definite at a non-isolated minimum)
and when the algorithm estimates do not necessarily converge to a single limit 
point. 
Using arguments based on differential geometry and Lojasiewicz inequalities, 
relatively tight upper bounds on the convergence rate are derived. 
The obtained results cover a broad class of complex stochastic 
gradient algorithms. 
We show how they can be used to evaluate the convergence rate of 
recursive prediction error algorithms for 
identification of 
linear stochastic dynamical systems. 
We also show how the convergence rate of supervised and temporal-difference 
learning algorithms 
can be assessed using the 
results derived in the paper. 

The paper is organized as follows. 
The main results are presented in 
Section \ref{section1}, where 
stochastic gradient algorithms with additive noise 
are considered.  
In Section \ref{section2}, the convergence rare of 
stochastic gradient algorithms with Markovian dynamics is analyzed. 
Sections \ref{section4} and \ref{section6} are devoted to   
examples of the results presented in Sections \ref{section1} and \ref{section2}. 
In Section \ref{section4}, supervised learning algorithms for 
feedforward neural networks and their convergence rate are studied, 
while 
the rate of convergence of temporal-difference learning algorithms 
is considered in Section \ref{section5}. 
The convergence rate of 
recursive prediction error algorithms for the identification of 
linear stochastic systems is analyzed in 
Section \ref{section6}. 
Sections \ref{section1*} -- \ref{section6*} contain
the proofs of the results presented in Sections \ref{section1} -- \ref{section6}. 

\section{Main Results} \label{section1}

In this section, the rate of convergence of the following algorithm is analyzed: 
\begin{align} \label{1.1}
	\theta_{n+1}
	=
	\theta_{n}
	-
	\alpha_{n} (\nabla f(\theta_{n} ) + w_{n} ), 
	\;\;\; 
	n\geq 0. 
\end{align}
In this recursion, $f:\mathbb{R}^{d_{\theta} } \rightarrow \mathbb{R}$ is a differentiable function, 
while $\{\alpha_{n} \}_{n\geq 0}$ is a sequence of positive real numbers.  
while 
$\theta_{0}$ is an $\mathbb{R}^{d_{\theta } }$-valued random variable 
defined on 
a probability space 
$(\Omega, {\cal F}, P )$, 
while $\{w_{n} \}_{n\geq 0}$ is an $\mathbb{R}^{d_{\theta} }$-valued stochastic process
defined on the same probability space. 
To allow more generality, we assume that for each $n \geq 0$, 
$w_{n}$ is a random function of $\theta_{0},\dots,\theta_{n}$. 
In the area of stochastic optimization, 
recursion (\ref{1.1}) is known as a stochastic gradient algorithm
(or stochastic gradient search),
while function $f(\cdot )$ is referred to as an objective function. 
For further details see \cite{pflug}, \cite{spall} and 
references given therein. 

Throughout the paper, unless otherwise stated, 
the following notation is used. 
The Euclidean norm is denoted by $\|\cdot \|$, 
while $d(\cdot, \cdot )$ stands for the distance induced by the Euclidean norm. 
$S$ and $C$ are the sets of stationary and critical points of $f(\cdot )$, i.e., 
\begin{align*}
	S=\{\theta \in \mathbb{R}^{d_{\theta } }:\nabla f(\theta ) = 0 \}, 
	\;\;\;\;\;  
	C=\{f(\theta ): \theta \in S \}. 
\end{align*}	
Sequence $\{\gamma_{n} \}_{n\geq 0}$
is defined by 
$\gamma_{0}=0$
and 
\begin{align*}
	\gamma_{n} = \sum_{i=0}^{n-1} \alpha_{i}
\end{align*}
for $n\geq 1$. 	
For $t \in (0, \infty )$ and $n\geq 0$, 
$a(n,t)$ is an integer defined as 
\begin{align*}
	a(n,t)=\max\left\{k\geq n: \gamma_{k} - \gamma_{n} \leq t \right\}.  
\end{align*} 

Algorithm (\ref{1.1}) is analyzed under the following assumptions:  

\begin{assumption} \label{a1.1}
$\lim_{n\rightarrow \infty } \alpha_{n} = 0$ 
and 
$\sum_{n=0}^{\infty } \alpha_{n} = \infty$. 
\end{assumption}

\begin{assumption} \label{a1.2}
There exists a real number 
$r \in (0, \infty )$ such that 
\begin{align*}
	w
	=
	\limsup_{n\rightarrow \infty } 
	\max_{n\leq k < a(n,1) }
	\left\|
	\sum_{i=n}^{k} \alpha_{i} \gamma_{i}^{r} w_{i} 
	\right\|
	< 
	\infty 
\end{align*}
w.p.1 on 
$\{\sup_{n\geq 0} \|\theta_{n} \| < \infty \}$. 
\end{assumption} 

\begin{assumption} \label{a1.3}
For any compact set $Q \subset \mathbb{R}^{d_{\theta } }$ and any 
$a\in f(Q)$, there exist real numbers 
$\delta_{Q,a} \in (0,1)$, 
$\mu_{Q,a} \in (1,2]$, 
$M_{Q,a} \in[1,\infty )$
such that 
\begin{align} \label{a1.3.1}
	|f(\theta ) - a |
	\leq 
	M_{Q,a} \|\nabla f(\theta ) \|^{\mu_{Q,a} } 
\end{align}
for all $\theta \in Q$ satisfying 
$|f(\theta ) - a | \leq \delta_{Q,a}$. 
\end{assumption}

\begin{assumption} \label{a1.4}
For any compact set $Q \subset \mathbb{R}^{d_{\theta } }$, 
there exist real numbers 
$\nu_{Q} \in (0,1]$, $N_{Q} \in [1,\infty )$ such that 
\begin{align} \label{a1.4.1}
	d(\theta, S )
	\leq 
	N_{Q} \|\nabla f(\theta ) \|^{\nu_{Q} }
\end{align} 
for all $\theta \in Q$. 
\end{assumption}

\begin{remark}
In order to show that Assumption \ref{a1.3} holds,
it is sufficient to demonstrate its `local version,'
i.e., that there exists  
an open vicinity $U$ of $S$
with the following property: 
For any compact set $Q \subset U$ and any $a\in f(Q)$, 
there exit real numbers 
$\delta_{Q,a} \in (0,1]$, 
$\mu_{Q,a} \in (1,2]$, 
$M_{Q,a} \in [1,\infty )$ 
such that (\ref{a1.3.1}) holds 
for all $\theta \in Q$ satisfying 
$|f(\theta ) - a | \leq \delta_{Q,a}$
(for details see the appendix at the end of the paper).
Similar conclusions apply to Assumption \ref{a1.4}. 
\end{remark}

Assumption \ref{a1.1} correspond to the sequence 
$\{\alpha_{n} \}_{n\geq 0}$ and is widely used in the asymptotic 
analysis of stochastic gradient and stochastic approximation 
algorithms. 
Assumption \ref{a1.2} is a noise condition. 
In this or a similar form, it is involved in most of 
the results on the convergence rate of stochastic gradient search
and stochastic approximation. 
It holds for algorithms with Markovian dynamics 
(see the next section). 
It is also satisfied when  
when  
$\{w_{n} \}_{n\geq 0}$ is a 
a martingale-difference sequence. 
Assumptions \ref{a1.3} and \ref{a1.4} are related to the stability 
of the gradient flow 
$d\theta/dt = - \nabla f(\theta )$, or more specifically, to the
geometry of the set of stationary points $S$. 
In the area of differential geometry, 
relations (\ref{a1.3.1}) and (\ref{a1.4.1}) 
are known as the Lojasiewicz inequalities
(see \cite{lojasiewicz1} and \cite{lojasiewicz2} for details). 
They hold if $f(\cdot )$ is analytic or subanalytic in 
an open vicinity of $S$
(see \cite{bierstone&milman}, \cite{lojasiewicz2} for the proof; 
for the form of Lojasiewicz inequality appeared in 
Assumption \ref{a1.3} see \cite[Theorem \L I, p. 775]{kurdyka}; 
for the definition and properties of analytic and 
subanalytic functions, consult 
\cite{bierstone&milman}, \cite{krantz&parks}). 
Although analyticity and subanalyticity are fairly strong conditions, 
they hold for the objective functions of many stochastic gradient 
algorithms commonly used in the areas of 
system identification, signal processing, machine learning, 
operations research and statistical inference. 
E.g., in this paper, we show that     
the objective functions associated with 
supervised and temporal-difference learning  
are analytical
(Sections \ref{section4} and \ref{section5}).  
We also demonstrate the same property for 
recursive prediction error identification (Section \ref{section6}). 
Furthermore, in \cite{tadic5}, we
show analyticity for the objective functions 
associated with   
recursive identification methods for  
hidden Markov models. 
It is also worth mentioning that the objective functions 
associated with recursive algorithms 
for principal and independent component analysis
(as well as with many other adaptive signal processing algorithms)
are usually polynomial or rational, and hence, analytic, too
(see e.g., \cite{cichocki&amari} and references cited therein). 

In order to state the main results of this section, we need further notation. 
For a compact set $Q \subset \mathbb{R}^{d_{\theta } }$, 
$C_{Q} \in [1,\infty )$ 
stands for an upper bound of 
$\|\nabla f(\cdot )\|$ on $Q$ 
and for a Lipschitz constant of $\nabla f(\cdot )$ on the same set. 
$\hat{A}$ denotes the set of accumulation points of 
$\{\theta_{n} \}_{n\geq 0}$
(notice that $\hat{A}$ is a random set), 
while 
\begin{align*}
	\hat{f}
	=
	\liminf_{n\rightarrow \infty } f(\theta_{n} ). 
\end{align*}
$\hat{Q}$ is a random set defined as
\begin{align*}
	\hat{Q}
	=
	\begin{cases}
	\{\theta: d(\theta, \hat{A} ) \leq \rho \}, 
	& \text{ if } 
	\sup_{n\geq 0} \|\theta_{n} \| < \infty
	\\
	\hat{A}, 
	&
	\text{ otherwise }
	\end{cases}
\end{align*}
where $\rho$ is an arbitrary positive 
(deterministic or random) quantity. 
$\hat{\delta}$, $\hat{\mu}$, $\hat{\nu}$, 
$\hat{C}$, $\hat{M}$ and $\hat{N}$
are random quantities defined by 
\begin{align}\label{1.21}
	\hat{\delta}
	=
	\delta_{\hat{Q}, \hat{f} }, 
	\;\;\;
	\hat{\mu}
	=
	\mu_{\hat{Q}, \hat{f} }, 
	\;\;\; 
	\hat{\nu}
	=
	\mu_{\hat{Q}, \hat{f} } \: \nu_{\hat{Q} } \: / 2, 
	\;\;\; 
	\hat{C}
	=
	C_{\hat{Q} }, 
	\;\;\; 
	\hat{M}
	=
	M_{\hat{Q}, \hat{f} }, 
	\;\;\; 
	\hat{N}
	=
	N_{\hat{Q} }
\end{align}
when 
$\sup_{n\geq 0} \|\theta_{n} \| < \infty$
and by 
\begin{align}\label{1.23}
	\hat{\delta}
	=
	1, 
	\;\;\;
	\hat{\mu}
	=
	2, 
	\;\;\; 
	\hat{\nu}
	=
	1, 
	\;\;\; 
	\hat{C}
	=
	1, 
	\;\;\; 
	\hat{M}
	=
	1, 
	\;\;\; 
	\hat{N}
	=
	1
\end{align}
otherwise  
(symbol $\:\hat{}$ is used to emphasize the dependence on $\hat{f}$ 
and $\hat{Q}$).  
Moreover, 
let 
\begin{align}\label{1.25}
	\hat{r} 
	= 
	\begin{cases} 
	1/(2 - \hat{\mu} ), 
	&\text{if } \hat{\mu} < 2
	\\
	\infty, 
	&\text{if } \hat{\mu} = 2
	\end{cases}, 
	\;\;\;\;\; 
	\hat{p} = \hat{\mu} \min\{r,\hat{r} \}, 
	\;\;\;\;\; 
	\hat{q} = \hat{\nu} \min\{r,\hat{r} \}.  
\end{align}
Furthermore, let 
\begin{align*}
	\phi(w)
	=
	\begin{cases}
	w, 
	&\text{if } r < \hat{r} \\
	1+w, 
	&\text{if } r = \hat{r} \\
	1, 
	&\text{if } r> \hat{r}  
	\end{cases}
\end{align*}

\begin{remark}
Since  
$\hat{f} \in f(\hat{Q} )$
when $\sup_{n\geq 0} \|\theta_{n} \| < \infty$, 
it is obvious that 
random quantities 
$\hat{\delta}$, $\hat{\mu}$, $\hat{\nu}$, 
$\hat{p}$, $\hat{q}$, $\hat{r}$, $\hat{C}$, $\hat{M}$, $\hat{N}$
are well-defined. 
Moreover, it is easy to conclude that 
inequalities 
$0 < \hat{\delta} \leq 1$, 
$1 < \hat{\mu} \leq 2$, 
$\hat{p} > \min\{1,r\}$, 
$\hat{q} > 1$, 
$\hat{r} > 1$, 
$1\leq \hat{C}, \hat{M}, \hat{N} < \infty$
hold everywhere
(i.e., on entire $\Omega$). 
It can also be demonstrated that 
(Lojasiewicz coefficients) 
$\delta_{Q,a}$, $\mu_{Q,a}$, $\nu_{Q}$, $M_{Q,a}$, $N_{Q}$
have `measurable versions'
such that 
$\hat{\delta}$, $\hat{\mu}$, $\hat{\nu}$, 
$\hat{p}$, $\hat{q}$, $\hat{r}$, $\hat{M}$, $\hat{N}$
are random variables in probability space
$(\Omega, {\cal F}, P )$
(i.e., measurable with respect to ${\cal F}$; 
details are provided in the appendix at the end of the paper).
Furthermore, 
as a consequence of Assumption \ref{a1.3}, we have
\begin{align}\label{1.1'}
	|f(\theta ) - \hat{f} |
	\leq 
	\hat{M} \|\nabla f(\theta ) \|^{\hat{\mu} }
\end{align}
on $\{\sup_{n\geq 0} \|\theta_{n} \| < \infty \}$
for all $\theta \in \hat{Q}$ 
satisfying $|f(\theta ) - \hat{f} | \leq \hat{\delta}$. 
\end{remark}

Our main results on the convergence and convergence rate of the recursion (\ref{1.1}) 
are contained in the next two theorems.  

\begin{theorem} \label{theorem1.1}
Let Assumptions \ref{a1.1} -- \ref{a1.3} hold. 
Then, $\lim_{n\rightarrow \infty } \nabla f(\theta_{n} ) = 0$
and 
$\lim_{n\rightarrow \infty } f(\theta_{n} ) = \hat{f}$
w.p.1 
on $\{\sup_{n\geq 0} \|\theta_{n} \| < \infty \}$. 
\end{theorem}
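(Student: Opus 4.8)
\medskip
\noindent\emph{Proof strategy.}
The plan is to run the dynamical-systems (``ODE'') method for stochastic approximation and then to exploit a structural consequence of the Lojasiewicz inequality in Assumption~\ref{a1.3}. The argument is pathwise: work on $\Lambda:=\{\sup_{n\ge0}\|\theta_n\|<\infty\}$ and fix $\omega\in\Lambda$ at which the conclusion of Assumption~\ref{a1.2} holds; after the first step everything is deterministic. On $\Lambda$ the sequence $\{\theta_n\}$ lies in a compact set, which we may take to be $\hat Q$, and on a neighbourhood of $\hat Q$ the field $-\nabla f$ is bounded and $\hat C$-Lipschitz; modifying it outside that neighbourhood (which affects neither the recursion for large $n$ nor the flow near $\hat A$) we may take these bounds global. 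Let $x^{a}(\cdot)$ solve $\dot x=-\nabla f(x)$, $x(0)=a$. The first step is a noise estimate: writing $\alpha_i w_i=\gamma_i^{-r}\bigl(\alpha_i\gamma_i^{r}w_i\bigr)$, applying summation by parts to the inner partial sums (which are bounded on $\Lambda$ by Assumption~\ref{a1.2} for $n$ large) and using $\gamma_i^{-r}\downarrow0$ (since $\gamma_n\to\infty$ by Assumption~\ref{a1.1}), one obtains, for each fixed $t>0$,
\begin{align*}
	W_n(t):=\max_{n\le k<a(n,t)}\Bigl\|\sum_{i=n}^{k}\alpha_i w_i\Bigr\|\longrightarrow 0\qquad\text{on }\Lambda
\end{align*}
(first $t=1$, with $W_n(1)\le\bar w\,\gamma_n^{-r}$ for a suitable $\bar w$, then general $t$ by splitting into $\lceil t\rceil$ unit $\gamma$-intervals). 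In particular $\alpha_n\|w_n\|\to0$, hence $\|\theta_{n+1}-\theta_n\|\le\alpha_n(\hat C+\|w_n\|)\to0$.

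The second step is the ODE comparison. From
\begin{align*}
	\theta_k-x^{\theta_n}(\gamma_k-\gamma_n)=\sum_{i=n}^{k-1}\int_{\gamma_i-\gamma_n}^{\gamma_{i+1}-\gamma_n}\bigl(\nabla f(x^{\theta_n}(s))-\nabla f(\theta_i)\bigr)\,ds-\sum_{i=n}^{k-1}\alpha_i w_i ,
\end{align*}
the Lipschitz bound on $\nabla f$ and the discrete Gronwall inequality give
\begin{align*}
	\max_{n\le k\le a(n,t)}\bigl\|\theta_k-x^{\theta_n}(\gamma_k-\gamma_n)\bigr\|\le\bigl(\hat C^{2}\,t\sup_{i\ge n}\alpha_i+W_n(t)\bigr)e^{\hat C t}\longrightarrow0 .
\end{align*}
Thus the affine interpolation of $\{\theta_n\}$ is an asymptotic pseudotrajectory of the gradient flow, and, together with $\|\theta_{n+1}-\theta_n\|\to0$, the standard dynamical-systems analysis of stochastic approximation shows that $\hat A$ is a non-empty, compact, connected set which is invariant and internally chain transitive for that flow.

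The third step brings in Assumption~\ref{a1.3} (its only use in this proof). Applied with $Q=\hat Q$ at a point $\theta\in S$, where $\nabla f(\theta)=0$, inequality (\ref{a1.3.1}) forces $|f(\theta)-a|>\delta_{\hat Q,a}$ for every $a\in f(S\cap\hat Q)$ with $a\ne f(\theta)$; hence $f(S\cap\hat Q)$ is a discrete, thus finite, subset of $f(\hat Q)$. Since $f$ is a strict Lyapunov function for the gradient flow (strictly decreasing off $S$) and $f(S\cap\hat Q)$ is finite, the classical Lyapunov criterion for internally chain transitive sets yields $\hat A\subseteq S$ and $f\equiv c$ on $\hat A$ for some $c$. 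Choosing a subsequence with $f(\theta_n)\to\hat f$ and a convergent sub-subsequence of $\{\theta_n\}$ (with limit in $\hat A$) identifies $c=\hat f$. Finally, every accumulation point of $\{\theta_n\}$ lies in $\hat A\subseteq S$, so $\nabla f(\theta_n)\to0$; and every accumulation point of $\{f(\theta_n)\}$ equals $f$ at a point of $\hat A$, hence equals $\hat f$, so $f(\theta_n)\to\hat f$.

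The main obstacle is the second step and the dynamical-systems fact it feeds into: deriving the asymptotic-pseudotrajectory property from the comparatively weak noise hypothesis (Assumption~\ref{a1.2}) requires care with the localization of the field and with the windowed noise bounds, and the implication ``internally chain transitive $+$ strict Lyapunov function with nowhere-dense value set $\Rightarrow$ contained in $S$, Lyapunov function constant'' (classical, but delicate) is the crucial ingredient. The first step, the Gronwall estimate itself, the finiteness of $f(S\cap\hat Q)$, and the final extraction of limits are routine.
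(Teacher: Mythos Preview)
Your argument is correct: the ODE/chain-recurrence route works, and your observation that Assumption~\ref{a1.3} forces $f(S\cap\hat Q)$ to be finite (each critical value is isolated by $\delta_{\hat Q,a}$, and the set is compact) is precisely what makes Bena\"im's Lyapunov criterion apply. The paper, however, takes a more elementary and self-contained path that avoids the chain-transitivity machinery. Its Lemma~\ref{lemma1.2} proves, via a Gronwall bound on $\|\theta_k-\theta_n\|$ and a Taylor expansion of $f$ (essentially the discrete analogue of your step~2, but aimed at $f$ rather than at tracking the flow), a windowed descent inequality
\[
f(\theta_{a(n,\hat t)})-f(\theta_n)\le -\tfrac{\hat t}{2}\|\nabla f(\theta_n)\|^2 + o(1)\quad\text{on }\Lambda;
\]
from this, two short contradiction arguments (Lemmas~\ref{lemma1.3} and~\ref{lemma1.3'}) give $\nabla f(\theta_n)\to 0$ and $f(\theta_n)\to\hat f$, each time applying the Lojasiewicz inequality \emph{pointwise along the iterates}: if $f(\theta_{m_k})\to b>\hat f$ while $\nabla f(\theta_{m_k})\to 0$, then $0<(b-\hat f)/2\le \hat M\|\nabla f(\theta_{m_k})\|^{\hat\mu}\to 0$. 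The trade-off: your route is conceptually cleaner and reuses off-the-shelf stochastic-approximation theory, but imports a nontrivial external theorem and does not by itself point toward the rates of Theorem~\ref{theorem1.2}; the paper's descent inequality is more hands-on but is exactly the tool (sharpened in Lemma~\ref{lemma1.4}) that drives the subsequent rate analysis, so their proof of Theorem~\ref{theorem1.1} is in effect a warm-up for the main quantitative result. Your use of Assumption~\ref{a1.3} is also lighter---you extract only the discreteness of critical values---whereas the paper exploits the inequality quantitatively at specific iterates.
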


\begin{theorem} \label{theorem1.2}
Let Assumptions \ref{a1.1} -- \ref{a1.3} hold. 
Then, 
there exists a random quantity 
$\hat{K}$ (which is a deterministic function of $\hat{C}, \hat{M}$)
such that $1\leq \hat{K} < \infty$ everywhere and 
such that 
\begin{align}
	& \label{t1.1.1*}
	\limsup_{n\rightarrow \infty } 
	\gamma_{n}^{\hat{p} }
	\|\nabla f(\theta_{n} ) \|^{2} 
	\leq 
	\hat{K}
	\big(
	\phi(w)
	\big)^{\hat{\mu} }, 
	\\
	& \label{t1.1.3*}
	\limsup_{n\rightarrow \infty } 
	\gamma_{n}^{\hat{p} }
	|f(\theta_{n} ) - \hat{f} |
	\leq 
	\hat{K} 
	\big(
	\phi(w)
	\big)^{\hat{\mu} }
\end{align}
w.p.1 on 
$\{\sup_{n\geq 0} \|\theta_{n} \| < \infty \}$.
If additionally, Assumption \ref{a1.4} is satisfied, 
then, there exists another random quantity 
$\hat{L}$
(which is a deterministic function of $\hat{C}, \hat{M}, \hat{N}$)
such that $1\leq \hat{L} < \infty$ everywhere and 
such that 
\begin{align}
	& \label{t1.1.5*}
	\limsup_{n\rightarrow \infty } 
	\gamma_{n}^{\hat{q} }
	d(\theta_{n}, S ) 
	\leq 
	\hat{L} 
	\big(
	\phi(w)
	\big)^{\hat{\nu} }
\end{align}
w.p.1 on 
$\{\sup_{n\geq 0} \|\theta_{n} \| < \infty \}$.
\end{theorem}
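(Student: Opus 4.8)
The plan is to fix a sample point $\omega$ in the almost sure event on which the conclusions of Theorem~\ref{theorem1.1} and the bound in Assumption~\ref{a1.2} hold, and to argue deterministically from there; off the event $\{\sup_{n\ge 0}\|\theta_n\|<\infty\}$ there is nothing to prove, the conventions (\ref{1.23}) only serving to make $\hat K,\hat L$ well defined everywhere. On that event one has $\theta_n\in\hat Q$ and $|f(\theta_n)-\hat f|\le\hat\delta$ for all large $n$, so that the Łojasiewicz inequality (\ref{1.1'}) applies, $\nabla f(\theta_n)\to 0$, $f(\theta_n)\to\hat f$, and $\|\sum_{i=n}^{k}\alpha_i\gamma_i^{r}w_i\|$ is bounded by a quantity of order $w$ uniformly over $n\le k<a(n,1)$. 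The starting point is the one--step descent inequality, obtained from Taylor's formula and the Lipschitz constant $\hat C$ of $\nabla f$ on $\hat Q$ (recall $\hat C=C_{\hat Q}$):
\begin{align*}
	f(\theta_{n+1})-f(\theta_n)
	&\leq
	-\alpha_n\|\nabla f(\theta_n)\|^{2}
	-\alpha_n\langle\nabla f(\theta_n),w_n\rangle
	\\
	&\qquad
	+\frac{\hat C}{2}\alpha_n^{2}\|\nabla f(\theta_n)+w_n\|^{2} .
\end{align*}

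I would prove the bound (\ref{t1.1.3*}) on $|f(\theta_n)-\hat f|$ first, as it is the heart of the theorem. Writing $\xi_n=f(\theta_n)-\hat f$ and using (\ref{1.1'}) in the form $\|\nabla f(\theta_n)\|^{2}\ge(\xi_n/\hat M)^{2/\hat\mu}$ with $2/\hat\mu\in[1,2)$, the descent inequality yields a perturbed scalar recursion of the type $\xi_{n+1}\le\xi_n-c\,\alpha_n\,\xi_n^{2/\hat\mu}+(\text{noise})$ with $c$ a function of $\hat M$, the (vanishing) excursions of $f(\theta_n)$ below $\hat f$ being absorbed into the noise term via the near--monotonicity of $f(\theta_n)$ along the recursion. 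The noiseless recursion $\dot v=-c\,v^{2/\hat\mu}$ decays like $t^{-\hat\mu\hat r}$, which matches the claimed rate $\gamma_n^{-\hat p}$ precisely when the intrinsic dissipation dominates (the regime $r\ge\hat r$, where $\phi(w)=1$), the case $r<\hat r$ being governed instead by the noise. The idea is therefore to rescale, studying $V_n=\gamma_n^{\hat p}\xi_n$, and to show that $V_n$ has strictly negative drift whenever it exceeds a threshold depending only on $\hat C$ and $\hat M$, so that $\limsup_n V_n$ is pinned by the accumulated noise input. That input is estimated by summation by parts against $\sum_i\alpha_i\gamma_i^{r}w_i$: the cross term $\sum_i\alpha_i\langle\nabla f(\theta_i),w_i\rangle$ is written as $\sum_i\langle\gamma_i^{-r}\nabla f(\theta_i),\alpha_i\gamma_i^{r}w_i\rangle$, and Abel summation transfers the Assumption~\ref{a1.2} bound onto the variation of $\gamma_i^{-r}\nabla f(\theta_i)$, which is controlled via $\hat C$ and, crucially, via $\|\nabla f(\theta_i)\|\sim\xi_i^{1/\hat\mu}$ from (\ref{1.1'}). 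It is this nonlinear coupling between the gradient magnitude and $\xi_i$ that turns the linear--in--$w$ bound of Assumption~\ref{a1.2} into the power $(\phi(w))^{\hat\mu}$ appearing on the right--hand side of (\ref{t1.1.3*}), the resonant case $r=\hat r$ accounting for the extra unit in $\phi(w)=1+w$. Balancing the negative drift against this input and passing to the $\limsup$ gives (\ref{t1.1.3*}) with $\hat K$ a function of $\hat C,\hat M$.

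Given (\ref{t1.1.3*}), the bound (\ref{t1.1.1*}) is obtained by returning to the descent inequality, summing it over a window $[n,a(n,1)]$ and telescoping: $\sum_{i=n}^{a(n,1)-1}\alpha_i\|\nabla f(\theta_i)\|^{2}\le\xi_n-\xi_{a(n,1)}+(\text{noise})$, so the $\gamma$--average of $\|\nabla f(\theta_i)\|^{2}$ over such a window is of order $\gamma_n^{-\hat p}(\phi(w))^{\hat\mu}$; since over a unit $\gamma$--window $\theta$ moves by a vanishing amount (its gradient part is $O(\sup\|\nabla f\|)$ and its noise part is $O(\gamma_n^{-r}w)$ by the same Abel argument), Lipschitz continuity of $\nabla f$ upgrades this average bound to the pointwise bound (\ref{t1.1.1*}). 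Finally, under Assumption~\ref{a1.4} one has $d(\theta_n,S)\le\hat N\|\nabla f(\theta_n)\|^{\nu_{\hat Q}}$ on $\hat Q$; substituting (\ref{t1.1.1*}) and using the identities $\hat p\,\nu_{\hat Q}/2=\hat q$ and $\hat\mu\,\nu_{\hat Q}/2=\hat\nu$, which follow directly from the definitions (\ref{1.21}), (\ref{1.25}), yields $\gamma_n^{\hat q}d(\theta_n,S)\le\hat L(\phi(w))^{\hat\nu}$ for a suitable $\hat L$ depending on $\hat C,\hat M,\hat N$, which is (\ref{t1.1.5*}).

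The main obstacle is the noise bookkeeping in the core step. The windows $[n,a(n,t)]$ are defined through the irregular sequence $\{\gamma_n\}$; the increments of $\gamma_i^{-r}\nabla f(\theta_i)$ involve, through $\|\theta_{i+1}-\theta_i\|$, a term proportional to $\|w_i\|$ that is not individually controlled by Assumption~\ref{a1.2} and so must be re--summed by parts rather than bounded termwise; and the whole argument has to be run uniformly across the regimes $r<\hat r$, $r=\hat r$, $r>\hat r$ and $\hat\mu=2$, and on the below--$\hat f$ excursions. Making the rescaling exponent, the drift threshold and the noise power fit together so that the final constants depend only on $\hat C,\hat M$ (and $\hat N$) is the delicate part of the argument.
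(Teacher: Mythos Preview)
Your overall architecture---prove the rate for $|f(\theta_n)-\hat f|$ first via a perturbed scalar recursion driven by the \L ojasiewicz inequality, then deduce the gradient rate by windowing, then plug into Assumption~\ref{a1.4}---matches the paper's, and your reading of how the exponent $\hat\mu$ arises on the noise is correct. But the mechanism you propose for the core step is different from the paper's, and in one place it does not close.

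The gap is the Abel summation. When you sum by parts in $\sum_i\langle\gamma_i^{-r}\nabla f(\theta_i),\alpha_i\gamma_i^{r}w_i\rangle$, the variation of $g_i=\gamma_i^{-r}\nabla f(\theta_i)$ contains the term $\gamma_{i+1}^{-r}(\nabla f(\theta_{i+1})-\nabla f(\theta_i))$, hence $\hat C\,\gamma_{i+1}^{-r}\alpha_i\|w_i\|$. This is a sum of \emph{norms} of $w_i$, not a partial sum of $\alpha_i\gamma_i^{r}w_i$, so Assumption~\ref{a1.2} gives no control over it, and a second summation by parts does not help (you cannot recover a signed telescoping from $\|w_i\|$). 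You flag this as ``the main obstacle,'' but the proposed fix---``re-summed by parts rather than bounded termwise''---does not apply to that particular piece.

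The paper sidesteps this entirely by never using one-step recursions or Abel summation. It works exclusively over windows $[n,a(n,\hat t\,)]$ of fixed $\gamma$-length $\hat t=\hat t(\hat C)$, controls $\|\theta_k-\theta_n\|$ and $\|\nabla f(\theta_k)\|$ on the window by a Bellman--Gronwall argument against the single quantity $\|u_{n,k}\|=\|\sum_{i=n}^{k-1}\alpha_i w_i\|$ (which \emph{is} bounded by Assumption~\ref{a1.2}), and arrives at a clean window-descent inequality with noise appearing only through $\gamma_n^{-\hat p/\hat\mu}\phi_\varepsilon(w)$. On top of this it does not linearly rescale but uses the singular Lyapunov function $v(\theta)=(f(\theta)-\hat f)^{-1/\hat p}$: above the threshold $\gamma_n^{\hat p}u(\theta_n)\ge\hat C_2(\phi_\varepsilon(w))^{\hat\mu}$ one gets $v(\theta_{a(n,\hat t\,)})-v(\theta_n)\ge(\hat t/\hat C_3)(\phi_\varepsilon(w))^{-\hat\mu/\hat p}$ when $\hat\mu<2$ (and a contraction of $u$ when $\hat\mu=2$), which immediately yields the $\limsup$ bound by a short contradiction. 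The below-$\hat f$ excursions are handled by a separate one-line contradiction (if $u(\theta_n)$ ever drops below $-\hat C_2\gamma_n^{-\hat p}(\phi_\varepsilon(w))^{\hat\mu}$, the window inequality forces $u$ to stay there, contradicting $u(\theta_n)\to 0$), rather than being ``absorbed into the noise.'' If you replace your one-step/Abel step by this windowed Gronwall estimate, the rest of your plan goes through essentially as in the paper.
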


The proofs are provided in Section \ref{section1*}. 
As an immediate consequence of the previous theorems, we get the following corollaries: 

\begin{corollary} \label{corollary1.1}
Let Assumptions \ref{a1.1} -- \ref{a1.4} hold. 
Then, the following is true: 
\begin{align*}
	\|\nabla f(\theta_{n} ) \|^{2}
	=
	o\big(\gamma_{n}^{-\hat{p} } \big),
	\;\;\; 
	d(f(\theta_{n} ), C ) 
	=
	o\big(\gamma_{n}^{-\hat{p} } \big), 
	\;\;\; 
	d(\theta_{n}, S )
	=
	o\big(\gamma_{n}^{-\hat{q} } \big)
\end{align*} 
w.p.1 on 
$\{\sup_{n\geq 0} \|\theta_{n} \| < \infty \} \cap
\{w = 0, \hat{r} > r \}$,  
and 
\begin{align*}
	\|\nabla f(\theta_{n} ) \|^{2}
	=
	O\big(\gamma_{n}^{-\hat{p} } \big),
	\;\;\; 
	d(f(\theta_{n} ), C ) 
	=
	O\big(\gamma_{n}^{-\hat{p} } \big), 
	\;\;\; 
	d(\theta_{n}, S )
	=
	O\big(\gamma_{n}^{-\hat{q} } \big)
\end{align*}
w.p.1 on 
$\{\sup_{n\geq 0} \|\theta_{n} \| < \infty \} \cap 
\{w = 0, \hat{r} > r \}^{c}$. 
\end{corollary}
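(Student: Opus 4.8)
The plan is to deduce the corollary directly from Theorems \ref{theorem1.1} and \ref{theorem1.2}, using only elementary properties of the random quantities $\hat p$, $\hat q$, $\hat\mu$, $\hat\nu$, $\hat K$, $\hat L$ and of $w$ that are provided by Assumption \ref{a1.2}, Theorem \ref{theorem1.2}, equations (\ref{1.21})--(\ref{1.25}) and the Remark following (\ref{1.25}). First I would note two preliminary facts valid w.p.1 on $\{\sup_{n\geq0}\|\theta_n\|<\infty\}$. On one hand, Assumption \ref{a1.1} gives $\gamma_n\to\infty$, while $\hat p>\min\{1,r\}>0$ and $\hat q>1>0$; hence $\gamma_n^{-\hat p}\to0$, $\gamma_n^{-\hat q}\to0$, so that the $o(\cdot)$ and $O(\cdot)$ statements in the corollary are meaningful. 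On the other hand, Theorem \ref{theorem1.1} gives $\nabla f(\theta_n)\to0$ and $f(\theta_n)\to\hat f$; since $\{\theta_n\}_{n\geq0}$ is bounded, it has an accumulation point $\theta$, and any such $\theta$ satisfies $\nabla f(\theta)=0$ (so $\theta\in S$) and, by continuity of $f$, also $f(\theta)=\hat f$, so that $\hat f\in f(S)=C$. Consequently $d(f(\theta_n),C)\leq|f(\theta_n)-\hat f|$ for every $n$, which reduces the estimate for $d(f(\theta_n),C)$ in the corollary to the estimate for $|f(\theta_n)-\hat f|$ supplied by (\ref{t1.1.3*}).

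I would then distinguish the two events. On $\{\sup_{n\geq0}\|\theta_n\|<\infty\}\cap\{w=0,\hat r>r\}$ one has $r<\hat r$, hence $\phi(w)=w=0$; since $\hat\mu,\hat\nu>0$ and $\hat K,\hat L<\infty$ everywhere, the right-hand sides of (\ref{t1.1.1*}), (\ref{t1.1.3*}) and (\ref{t1.1.5*}) vanish, so that
\begin{align*}
	\limsup_{n\to\infty}\gamma_n^{\hat p}\|\nabla f(\theta_n)\|^2\leq0,
	\qquad
	\limsup_{n\to\infty}\gamma_n^{\hat p}|f(\theta_n)-\hat f|\leq0,
	\qquad
	\limsup_{n\to\infty}\gamma_n^{\hat q}d(\theta_n,S)\leq0.
\end{align*}
These three quantities being nonnegative, their limsups equal $0$; in view of $\gamma_n\to\infty$ this is precisely $\|\nabla f(\theta_n)\|^2=o(\gamma_n^{-\hat p})$, $|f(\theta_n)-\hat f|=o(\gamma_n^{-\hat p})$ and $d(\theta_n,S)=o(\gamma_n^{-\hat q})$, which together with $d(f(\theta_n),C)\leq|f(\theta_n)-\hat f|$ gives the first display. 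On the complementary event $\{\sup_{n\geq0}\|\theta_n\|<\infty\}\cap\{w=0,\hat r>r\}^{c}$, Assumption \ref{a1.2} gives $w<\infty$ w.p.1 on $\{\sup_{n\geq0}\|\theta_n\|<\infty\}$, and in each of the three cases of its definition $\phi(w)$ equals $w$, $1+w$ or $1$, hence is finite; since $\hat\mu,\hat\nu,\hat K,\hat L$ are finite everywhere, the right-hand sides of (\ref{t1.1.1*}), (\ref{t1.1.3*}) and (\ref{t1.1.5*}) are finite, so the corresponding limsups are finite, which is exactly $\|\nabla f(\theta_n)\|^2=O(\gamma_n^{-\hat p})$, $|f(\theta_n)-\hat f|=O(\gamma_n^{-\hat p})$ and $d(\theta_n,S)=O(\gamma_n^{-\hat q})$; together with $d(f(\theta_n),C)\leq|f(\theta_n)-\hat f|$ this yields the second display.

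Since the corollary is an immediate consequence of Theorems \ref{theorem1.1} and \ref{theorem1.2}, no genuine obstacle arises; the one point needing a short separate argument is the verification that $\hat f$ is a critical value of $f$, so that the bound on $|f(\theta_n)-\hat f|$ transfers to $d(f(\theta_n),C)$, and this has been indicated above using Theorem \ref{theorem1.1}.
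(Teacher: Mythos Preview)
Your proposal is correct and is precisely what the paper intends: the paper gives no separate proof of Corollary \ref{corollary1.1}, stating it merely as ``an immediate consequence'' of Theorems \ref{theorem1.1} and \ref{theorem1.2}, and you have spelled out exactly those immediate consequences, including the only point that requires a sentence of justification, namely that $\hat f\in C$ so that $d(f(\theta_n),C)\leq|f(\theta_n)-\hat f|$.
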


\begin{corollary} \label{corollary1.2} 
Let Assumptions \ref{a1.1} -- \ref{a1.3} hold. 
Then, 
\begin{align*}
	\|\nabla f(\theta_{n} ) \|^{2} = o(\gamma_{n}^{-p} ), 
	\;\;\;\;\; 
	d(f(\theta_{n} ), C ) = o(\gamma_{n}^{-p} )
\end{align*}	
w.p.1 on 
$\{\sup_{n\geq 0} \|\theta_{n} \| < \infty \}$, 
where $p=\min\{1,r\}$. 
\end{corollary}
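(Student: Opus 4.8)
The plan is to obtain Corollary \ref{corollary1.2} as a direct consequence of Theorems \ref{theorem1.1} and \ref{theorem1.2}, with no additional estimates required. The key observation is that the exponent $\hat{p}$ appearing in Theorem \ref{theorem1.2} always strictly dominates $p=\min\{1,r\}$: indeed, by the Remark preceding Theorem \ref{theorem1.1}, $\hat{p}>\min\{1,r\}=p$ on all of $\Omega$, while Assumption \ref{a1.1} yields $\gamma_{n}\to\infty$; hence $\gamma_{n}^{p-\hat{p}}\to 0$ pathwise.

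First I would check that the right-hand sides of (\ref{t1.1.1*}) and (\ref{t1.1.3*}) are finite w.p.1 on $\{\sup_{n\geq 0}\|\theta_{n}\|<\infty\}$. On this event Assumption \ref{a1.2} gives $w<\infty$ w.p.1, so $\phi(w)<\infty$; combined with $1\leq\hat{K}<\infty$ and $1<\hat{\mu}\leq 2$ (both holding everywhere, by the Remark and by Theorem \ref{theorem1.2}), the quantities $\hat{K}(\phi(w))^{\hat{\mu}}$ are finite w.p.1 on the event. Consequently, w.p.1 on $\{\sup_{n\geq 0}\|\theta_{n}\|<\infty\}$,
\begin{align*}
\limsup_{n\to\infty}\gamma_{n}^{\hat{p}}\|\nabla f(\theta_{n})\|^{2}<\infty,
\qquad
\limsup_{n\to\infty}\gamma_{n}^{\hat{p}}|f(\theta_{n})-\hat{f}|<\infty.
\end{align*}

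Next I would pass from $\hat{p}$ to $p$. Writing $\gamma_{n}^{p}\|\nabla f(\theta_{n})\|^{2}=\gamma_{n}^{p-\hat{p}}\bigl(\gamma_{n}^{\hat{p}}\|\nabla f(\theta_{n})\|^{2}\bigr)$ and noting that the first factor tends to $0$ while the second (being nonnegative) has finite $\limsup$, the product tends to $0$; thus $\|\nabla f(\theta_{n})\|^{2}=o(\gamma_{n}^{-p})$ w.p.1 on $\{\sup_{n\geq 0}\|\theta_{n}\|<\infty\}$. The identical argument applied to (\ref{t1.1.3*}) gives $|f(\theta_{n})-\hat{f}|=o(\gamma_{n}^{-p})$ on the same event.

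Finally I would identify $\hat{f}$ as a critical value, so as to replace $|f(\theta_{n})-\hat{f}|$ by $d(f(\theta_{n}),C)$. On $\{\sup_{n\geq 0}\|\theta_{n}\|<\infty\}$ the bounded sequence $\{\theta_{n}\}_{n\geq 0}$ has a nonempty accumulation set; picking an accumulation point $\theta^{\ast}$ and a subsequence $\theta_{n_{k}}\to\theta^{\ast}$, Theorem \ref{theorem1.1} and the continuity of $\nabla f(\cdot)$ and $f(\cdot)$ give $\nabla f(\theta^{\ast})=0$ and $f(\theta^{\ast})=\hat{f}$, so $\theta^{\ast}\in S$ and $\hat{f}=f(\theta^{\ast})\in C$. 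Therefore $d(f(\theta_{n}),C)\leq|f(\theta_{n})-\hat{f}|=o(\gamma_{n}^{-p})$ w.p.1 on $\{\sup_{n\geq 0}\|\theta_{n}\|<\infty\}$, which is the assertion. No serious obstacle is expected; the only point requiring care is the bookkeeping between the ``everywhere'' statements ($\hat{p}>p$, $\hat{K}<\infty$, $\hat{\mu}\leq 2$) and the ``w.p.1 on the event'' statements, together with the pathwise nature of the $O$-to-$o$ reduction.
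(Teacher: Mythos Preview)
Your proposal is correct and matches the paper's intended derivation: the paper states Corollary~\ref{corollary1.2} simply as an ``immediate consequence'' of Theorems~\ref{theorem1.1} and~\ref{theorem1.2} without a separate proof, and the route you describe---using $\hat p>\min\{1,r\}$ from the Remark to convert the $O(\gamma_n^{-\hat p})$ bounds of Theorem~\ref{theorem1.2} into $o(\gamma_n^{-p})$, together with $\hat f\in C$ via Theorem~\ref{theorem1.1}---is exactly that consequence spelled out.
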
 

In the literature on stochastic and deterministic optimization, 
the asymptotic behavior of gradient search is usually 
characterized by
the gradient, objective and estimate convergence, 
i.e., 
by the convergence of sequences 
$\{\nabla f(\theta_{n} ) \}_{n\geq 0}$,  
$\{f(\theta_{n} ) \}_{n\geq 0}$ and   
$\{\theta_{n} \}_{n\geq 0}$
(see e.g., \cite{bertsekas}, \cite{bertsekas&tsitsiklis2}, 
\cite{polyak&tsypkin}, \cite{polyak}  
are references quoted therein). 
Similarly, the convergence rate can be described 
by the rates at which 
$\{\nabla f(\theta_{n} ) \}_{n\geq 0}$,  
$\{f(\theta_{n} ) \}_{n\geq 0}$  
and 
$\{\theta_{n} \}_{n\geq 0}$
tend to the sets of their limit points. 
Theorem \ref{theorem1.2} and Corollary \ref{corollary1.1}
provide relatively tight upper bounds on these rates in the terms 
of the asymptotic properties of 
noise $\{w_{n} \}_{n\geq 0}$
and 
the gradient flow $d\theta/dt = - \nabla f(\theta )$. 
Basically, the theorem and its corollary claim that 
the convergence rate of 
$\{\|\nabla f(\theta_{n} ) \|^{2} \}_{n\geq 0}$ 
and 
$\{f(\theta_{n} ) \}_{n\geq 0}$ is the slower of the rates 
$O(\gamma_{n}^{- \hat{r} \hat{\mu} } )$
(the rate of the gradient flow 
$d\theta/dt = - \nabla f(\theta )$ sampled at instants 
$\{\gamma_{n} \}_{n\geq 0}$) 
and 
$O(\gamma_{n}^{- r \hat{\mu} } )$
(the rate of the noise averages 
$\max_{k\geq n} \|\sum_{i=n}^{k} \alpha_{i} w_{i} \|^{\hat{\mu} }$). 
Apparently, the rates provided in 
Theorem \ref{theorem1.1} and Corollary \ref{corollary1.1} 
are of a local nature: 
They hold only on the event where 
algorithm (\ref{1.1}) is stable
(i.e., where sequence 
$\{\theta_{n} \}_{n\geq 0}$ is bounded). 
Stating results on the convergence rate 
in such a local form is quite reasonable due to the following 
reasons. 
The stability of stochastic gradient search is based on 
well-understood arguments which are rather different from 
the arguments used in the analysis of the convergence rate. 
Moreover and more importantly, 
it is straightforward to get a global version of the rates 
provided in Theorem \ref{theorem1.1} and Corollary \ref{corollary1.1} 
by combining the theorem with 
the methods used to verify or ensure the stability
(e.g., with the results of 
\cite{borkar&meyn} and \cite{chen}). 

Due to its practical and theoretical importance, 
the rate of convergence of stochastic gradient search 
(and stochastic approximation) 
has been the subject of a large number of papers and books
(see see \cite{benveniste}, \cite{kushner&yin}, \cite{ljung2}, 
\cite{polyak}, \cite{spall} 
and references cited therein). 
Although the existing results 
provide a good insight into 
the asymptotic behavior and efficiency 
of stochastic gradient algorithms, 
they are based on fairly restrictive assumptions: 
Literally, they all require the objective function 
$f(\cdot )$ to have an isolated minimum $\hat{\theta}$
(sometimes even to be strongly unimodal) 
such that 
Hessian $\nabla^{2} f(\hat{\theta } )$ is strictly positive definite
and $\lim_{n\rightarrow \infty } \theta_{n} = \hat{\theta}$ w.p.1. 
Unfortunately, 
in the case of high-dimensional and high-nonlinear stochastic gradient algorithms
(such as online machine learning and recursive identification), 
it is hard (if not impossible at all) to 
show even the existence of an isolated minimum, 
let alone the definiteness of $\nabla^{2} f(\cdot )$
and the point-convergence of $\{\theta_{n} \}_{n\geq 0}$.  
Relying on the Lojasiewicz inequalities, 
Theorem \ref{theorem1.1} and Corollary \ref{corollary1.1} 
overcome these difficulties: 
Both the theorem and its corollary allow the objective function 
$f(\cdot )$
to have multiple, non-isolated minima, 
impose no restriction on the values of 
$\nabla^{2} f(\cdot )$ 
(notice that $\nabla^{2} f(\cdot )$ cannot be strictly definite 
at a non-isolated minimum or maximum)
and permit $\{\theta_{n} \}_{n\geq 0}$ to have 
multiple limit points. 
Moreover, they 
cover a broad class of complex stochastic gradient algorithms
(see Sections \ref{section4} and \ref{section6}; see also \cite{tadic5}). 
To the best or our knowledge, 
these are the only results on the convergence rate with such features. 

Regarding the results of Theorem \ref{theorem1.1} and 
Corollary \ref{corollary1.1}, 
it is worth mentioning that they are not just a combination 
of the Lojasiewicz inequalities and the existing techniques for 
the asymptotic analysis of stochastic gradient search and 
stochastic approximation. 
On the contrary, the existing techniques seem to be inapplicable to 
the case of multiple non-isolated minima. 
The reason comes out of the fact that 
these techniques 
crucially rely on 
the Lyapunov function 
$u(\theta ) = 
(\theta - \hat{\theta } )^{T} \nabla^{2} f(\hat{\theta } ) (\theta - \hat{\theta } )$, 
where 
$\hat{\theta }$ is an isolated minimum such that 
$\lim_{n\rightarrow \infty } \theta_{n} = \hat{\theta}$ w.p.1 
and 
$\nabla^{2} f(\cdot )$ is strictly positive definite. 
Unfortunately, in the case of multiple, non-isolated minima, 
neither does 
$\{\theta_{n} \}_{n\geq 0}$ necessarily have a single limit point 
(limit cycles can occur), 
nor $\nabla^{2} f(\cdot )$ can be a strictly positive definite matrix. 
In order to overcome this problem, 
we use a `singular' Lyapunov function 
$v(\theta ) = 1/(f(\theta ) - \hat{f} )^{1/p}$,  
where $p \in (0, \hat{\mu}/(2-\hat{\mu} )]$  
and $\theta\in \{\vartheta \in \mathbb{R}^{d_{\theta } }: f(\vartheta ) > \hat{f} \}$. 
Although subtle techniques are needed to 
handle such a Lyapunov function
(see Section \ref{section1*}), 
$v(\cdot )$ provides intuitively clear explanation of the results of 
Theorem \ref{theorem1.2} and Corollary \ref{corollary1.1}. 
The explanation is based on the heuristic analysis of the following two 
cases. 

{\em Case 1:
$\sup_{n\geq 0} \|\theta_{n} \| < \infty$ and 
$\liminf_{n\rightarrow \infty } 
\gamma_{n}^{r\hat{\mu} } (f(\theta_{n} ) - \hat{f} ) = -\infty$.}
\newline
In this case, 
there exists an increasing integer sequence
$\{n_{k} \}_{k\geq 0}$
such that 
$f(\theta_{n_{k} } ) < \hat{f}$ for each $k\geq 0$
and 
$\lim_{n\rightarrow \infty } 
\gamma_{n_{k} }^{r\hat{\mu} } (f(\theta_{n_{k} } ) - \hat{f} ) = -\infty$.
Therefore, Assumption \ref{a1.3} implies 
$\lim_{n\rightarrow \infty } 
\gamma_{n_{k} }^{r} \|\nabla f(\theta_{n_{k} } ) \| = \infty$. 
Since 
$\max_{k\geq n} \left\|\sum_{i=n}^{k} \alpha_{i} w_{i} \right\|
= O(\gamma_{n}^{-r} )$
(see Lemma \ref{lemma1.1}), 
there exists a large integer $m\gg 1$ such that 
$f(\theta_{m} ) < \hat{f}$
and 
$\max_{n\geq m} \left\|\sum_{i=m}^{n} \alpha_{i} w_{i} \right\|
\leq \|\nabla f(\theta_{m} ) \|/2$.
Then, for $n\geq a(m,1)$, Taylor formula yields 
\begin{align*}
	f(\theta_{n} ) 
	\approx &
	f(\theta_{m} ) 
	-
	(\nabla f(\theta_{m} ) )^{T}
	\sum_{i=m}^{n-1} 
	\alpha_{i} (\nabla f(\theta_{i} ) + w_{i} )
	\\
	\approx &
	f(\theta_{m} )
	-
	\|\nabla f(\theta_{m} ) \|^{2} 
	(\gamma_{n} - \gamma_{m} )
	-
	(\nabla f(\theta_{m} ) )^{T}
	\sum_{i=m}^{n-1} 
	\alpha_{i} w_{i} 
	\\
	\leq &
	f(\theta_{m} )
	-
	\frac{\|\nabla f(\theta_{m} ) \|^{2} }{2}
	-
	\|\nabla f(\theta_{m} ) \|
	\left(
	\frac{\|\nabla f(\theta_{m} ) \| }{2}
	-
	\left\|
	\sum_{i=m}^{n-1} \alpha_{i} w_{i} 
	\right\|
	\right)
	\\
	\leq &
	f(\theta_{m} )
\end{align*}
(notice that $\gamma_{n} - \gamma_{m} \geq 1$). 
Hence, $f(\theta_{n} ) \leq f(\theta_{m} ) < \hat{f}$ 
for $n\geq a(m,1)$, 
which is impossible as 
$\lim_{n\rightarrow \infty } f(\theta_{n} ) = \hat{f}$. 

{\em Case 2:
$\sup_{n\geq 0} \|\theta_{n} \| < \infty$ and 
$\limsup_{n\rightarrow \infty } 
\gamma_{n}^{r\hat{\mu} } (f(\theta_{n} ) - \hat{f} ) = \infty$.}
\newline
Similarly as in the previous case, 
there exists an increasing integer sequence 
$\{n_{k} \}_{k\geq 0}$ such that 
$f(\theta_{n_{k} } ) > \hat{f}$ for each $k\geq 0$
and 
$\lim_{n\rightarrow \infty } 
\gamma_{n_{k} }^{r\hat{\mu} } (f(\theta_{n_{k} } ) - \hat{f} ) = \infty$.
Consequently, Assumption \ref{a1.3} yields 
$\lim_{k\rightarrow \infty } \gamma_{n_{k} }^{r} \|\nabla f(\theta_{n_{k} } ) \| 
= \infty$ and 
\begin{align*}
	\frac{\|\nabla f(\theta_{n_{k} } ) \|^{2} }
	{(f(\theta_{n_{k} } ) - \hat{f} )^{1+1/p} }
	\geq 
	\frac{1}
	{\hat{M}^{2/\hat{\mu} } 
	(f(\theta_{n_{k} } ) - \hat{f} )^{1 + 1/p - 2/\hat{\mu} } }
\end{align*}
for $k\geq 0$. 
Since 
$1 + 1/p \geq 2/\hat{\mu}$, 
$\lim_{n\rightarrow \infty } f(\theta_{n} ) = \hat{f}$
and 
$\max_{k\geq n} \left\|\sum_{i=n}^{k} \alpha_{i} w_{i} \right\|
= O(\gamma_{n}^{-r} )$, 
there exists a large integer 
$m\gg 1$
such that 
$\max_{n\geq m} \left\|\sum_{i=m}^{n} \alpha_{i} w_{i} \right\|
\leq \|\nabla f(\theta_{m} ) \|/2$, 
$f(\theta_{m} ) \geq \hat{f}$
and 
\begin{align*}
	\frac{\|\nabla f(\theta_{m} ) \|^{2} }
	{(f(\theta_{m} ) - \hat{f} )^{1 + 2/p} }
	\geq 
	\frac{1}{\hat{M}^{2/\hat{\mu} } }. 
\end{align*}
Then, for any $n\geq a(m,1)$ satisfying $f(\theta_{n} ) > \hat{f}$, 
Taylor formula implies 
\begin{align*}
	v(\theta_{n} )
	\approx &
	v(\theta_{m} )
	-
	(\nabla v(\theta_{m} ) )^{T} 
	\sum_{i=m}^{n-1} 
	\alpha_{i} (\nabla f(\theta_{i} ) + w_{i} )
	\\
	\approx &
	v(\theta_{m} )
	+
	\frac{\|\nabla f(\theta_{m} ) \|^{2} }
	{p (f(\theta_{m} ) - \hat{f} )^{1+1/p} } 	
	(\gamma_{n} - \gamma_{m} ) 
	+
	\frac{(\nabla f(\theta_{m} ) )^{T} }
	{p (f(\theta_{m} ) - \hat{f} )^{1+1/p} }
	\sum_{i=m}^{n-1} \alpha_{i} w_{i}
	\\
	\geq &
	v(\theta_{m} )
	+
	\frac{1}{2 p \hat{M}^{2/\hat{\mu} } } 
	(\gamma_{n} - \gamma_{m} ) 
	+
	\frac{\|\nabla f(\theta_{m} ) \| }
	{p (f(\theta_{m} ) - \hat{f} )^{1+1/p} }
	\left(
	\frac{1}{2}
	-
	\left\|
	\sum_{i=m}^{n-1} \alpha_{i} w_{i} 
	\right\|	
	\right)
	\\
	\geq &
	\frac{1}{2 p \hat{M}^{2/\hat{\mu} } } 
	(\gamma_{n} - \gamma_{m} ). 
\end{align*}
Thus, 
$f(\theta_{n} ) - \hat{f} 
\leq (2p\hat{M} )^{2p} (\gamma_{n} - \gamma_{m} )^{-p}$
for $n\geq a(m,1)$
(notice that $\hat{\mu} > 1$). 
\vspace{0.3em}

Following the reasoning outlined in the above cases, 
it can easily be concluded that 
the slower of 
$O(\gamma_{n}^{-p})$ and $O(\gamma_{n}^{-r\hat{\mu} })$
is the rate at which $f(\theta_{n} )$ tends to $\hat{f}$. 
Since $p$ can be any number from 
$(0, \hat{r} \hat{\mu} ]$
(in the proof of Theorem \ref{theorem1.1}, Section \ref{section1*}, 
value 
$p = \hat{p} = \hat{\mu} \min\{r,\hat{r} \}$ is used), 
it is also straightforward to deduce that 
$O(\gamma_{n}^{-\hat{p} } )$
is the convergence rate of 
$\{f(\theta_{n} ) \}_{n\geq 0}$. 
In addition to this, the previously described heuristics 
indicate that in the terms of $r$ and $\hat{\mu}$, 
$O(\gamma_{n}^{-\hat{p} } )$ is probably the tightest estimate 
of the convergence rate of $\{f(\theta_{n} ) \}_{n\geq 0}$.  
The same conclusion is suggested by the following two special cases: 

{\em Case (a): $w_{n} = 0$ for each $n\geq 0$.}  
\newline
Due to Assumption \ref{a1.3}, we have 
\begin{align*}
	\frac{d(f(\theta(t) ) - \hat{f} )}{dt}
	=
	-\|\nabla f(\theta(t) ) \|^{2} 
	\leq 
	- 
	\left(1/\hat{M} \right)^{2/\hat{\mu} }
	(f(\theta(t)) - \hat{f} )^{2/\hat{\mu} }
\end{align*}
for a solution $\theta(\cdot )$ of 
$d\theta/dt = - \nabla f(\theta )$
satisfying  
$\theta(t) \in \hat{Q}$ for all $t\in [0,\infty )$ and
$\lim_{t\rightarrow \infty } f(\theta(t) ) = \hat{f}$. 
Consequently, 
$f(\theta(t) ) - \hat{f} = 
O(t^{-\hat{\mu}/(2 - \hat{\mu} ) } ) =
O(t^{-\hat{r} \hat{\mu} })$. 
As $\{\theta_{n} \}_{n\geq 0}$ is asymptotically equivalent to 
$\theta(\cdot )$ sampled at time instances  
$\{\gamma_{n} \}_{n\geq 0}$, 
we get 
$f(\theta_{n} ) - \hat{f} = O(\gamma_{n}^{-\hat{r} \hat{\mu} } )$. 
The same result is implied by Theorem \ref{theorem1.1} 
and Corollary \ref{corollary1.1}. 

{\em Case (b): $f(\theta ) = \theta^{T} A \theta$ and $A$ is a strictly positive definite
matrix.}  
\newline
Recursion (\ref{1.1}) reduces to a linear stochastic approximation 
algorithm in this case. 
For such an algorithm, it is known that the tightest estimate of 
the convergence rate is 
$f(\theta_{n} ) = O(\gamma_{n}^{-2r} )$ if $w>0$, 
and 
$f(\theta_{n} ) = o(\gamma_{n}^{-2r} )$ for $w=0$
(see \cite{tadic1}). 
The same rate is provided by Theorem \ref{theorem1.2} and 
Corollary \ref{corollary1.1}. 

\section{Stochastic Gradient Algorithms with Markovian Dynamics} \label{section2}

In order to illustrate the results of Section \ref{section1} and 
to set up a framework for the analysis carried out in Sections 
\ref{section4} and \ref{section6}, 
we apply Theorems \ref{theorem1.1}, \ref{theorem1.2} and 
Corollaries \ref{corollary1.1}, \ref{corollary1.2} 
to stochastic gradient algorithms with Markovian dynamics. 
These algorithms are defined by the following difference equation: 
\begin{align} \label{2.1}
	\theta_{n+1}
	=
	\theta_{n} 
	-
	\alpha_{n} F(\theta_{n}, \xi_{n+1} ), 
	\;\;\; 
	n\geq 0. 
\end{align}
In this recursion, 
$F: \mathbb{R}^{d_{\theta} } \times \mathbb{R}^{d_{\xi} } \rightarrow \mathbb{R}^{d_{\theta } }$ is 
a Borel-measurable function, 
while $\{\alpha_{n} \}_{n\geq 0}$ is a sequence of positive real numbers. 
$\theta_{0}$ is an $\mathbb{R}^{d_{\theta } }$-valued random variable defined on 
a probability space $(\Omega, {\cal F}, P)$, 
while 
$\{\xi_{n} \}_{n\geq 0}$ is an $\mathbb{R}^{d_{\xi } }$-valued stochastic process 
defined on the same probability space. 
$\{\xi_{n} \}_{n\geq 0}$ is a Markov process controlled by 
$\{\theta_{n} \}_{n\geq 0}$, i.e., 
there exists a family of transition probability kernels 
$\{\Pi_{\theta }(\cdot,\cdot ) \}_{\theta \in \mathbb{R}^{d_{\theta } } }$
defined on $\mathbb{R}^{d_{\xi } }$
such that 
\begin{align*}
	P(\xi_{n+1} \in B|\theta_{0},\xi_{0},\dots,\theta_{n},\xi_{n} ) 
	=
	\Pi_{\theta_{n} }(\xi_{n}, B )
\end{align*}
w.p.1 for any Borel-measurable set $B \subseteq \mathbb{R}^{d_{\xi } }$ and $n\geq 0$. 
In the context of stochastic gradient search, 
$F(\theta_{n},\xi_{n+1} )$ is regarded to as an estimator of 
$\nabla f(\theta_{n} )$. 

The algorithm (\ref{2.1}) is analyzed under the following assumptions. 

\begin{assumption} \label{a2.1}
$\lim_{n\rightarrow \infty } \alpha_{n} = 0$, 
$\limsup_{n\rightarrow \infty } |\alpha_{n+1}^{-1} - \alpha_{n}^{-1} | < \infty$ 
and 
$\sum_{n=0}^{\infty } \alpha_{n} = \infty$. 
There exists a real number $r\in (0,\infty )$
such that 
$\sum_{n=0}^{\infty } \alpha_{n}^{2} \gamma_{n}^{2r} < \infty$. 
\end{assumption}

\begin{assumption} \label{a2.2}
There exist a differentiable function 
$f: \mathbb{R}^{d_{\theta } } \rightarrow \mathbb{R}$ 
and 
a Borel-measurable function 
$\tilde{F}: \mathbb{R}^{d_{\theta} } \times \mathbb{R}^{d_{\xi} } \rightarrow \mathbb{R}^{d_{\theta } }$
such that 
$\nabla f(\cdot )$ is locally Lipschitz continuous and 
such that 
\begin{align*}
	F(\theta, \xi ) 
	-
	\nabla f(\theta )
	=
	\tilde{F}(\theta, \xi ) 
	-
	(\Pi\tilde{F} )(\theta, \xi )
\end{align*}
for each $\theta\in \mathbb{R}^{d_{\theta } }$, 
$\xi \in \mathbb{R}^{d_{\xi } }$, 
where 
$(\Pi\tilde{F} )(\theta, \xi ) 
= \int \tilde{F}(\theta, \xi' ) \Pi_{\theta }(\xi, d\xi' )$. 
\end{assumption}

\begin{assumption} \label{a2.3}
For any compact set $Q \subset \mathbb{R}^{d_{\theta } }$ and $s \in (0,1)$, 
there exists a Borel-measurable function 
$\varphi_{Q, s }: \mathbb{R}^{d_{\xi } } \rightarrow [1,\infty )$ such that 
\begin{align*}
	&
	\max\{
	\|F(\theta,\xi ) \|, \|\tilde{F}(\theta,\xi ) \|, \|(\Pi \tilde{F} )(\theta,\xi ) \|
	\}
	\leq 
	\varphi_{Q, s }(\xi ), 
	\\
	&
	\|(\Pi \tilde{F} )(\theta',\xi ) - (\Pi \tilde{F} )(\theta'',\xi ) \|
	\leq 
	\varphi_{Q, s }(\xi ) \|\theta' - \theta'' \|^{s} 
\end{align*}
for all 
$\theta, \theta', \theta'' \in Q$, $\xi \in \mathbb{R}^{d_{\xi } }$.  
\end{assumption}

\begin{assumption} \label{a2.4}
Given a compact set $Q \subset \mathbb{R}^{d_{\theta } }$ and $s \in (0,1)$, 
\begin{align*}
	\sup_{n\geq 0}
	E\left(
	\varphi_{Q, s }^{2}(\xi_{n} ) 
	I_{\{\tau_{Q} \geq n \} }
	|\theta_{0}=\theta, \xi_{0}=\xi 
	\right)
	< 
	\infty
\end{align*}
for all $\theta \in \mathbb{R}^{d_{\theta } }$, $\xi \in \mathbb{R}^{d_{\xi } }$, 
where 
$\tau_{Q} = 
\inf\{n\geq 0: \theta_{n} \not\in Q \}$. 
\end{assumption}

The main results on the convergence rate of recursion 
(\ref{2.1}) are in the next theorem. 

\begin{theorem} \label{theorem2.1}
Let Assumptions \ref{a2.1} -- \ref{a2.4} hold, 
and suppose that $f(\cdot )$ (introduced in Assumption \ref{a2.2}) 
satisfies Assumptions \ref{a1.3} and \ref{a1.4}. 
Then, 
\begin{align*}
	\|\nabla f(\theta_{n} ) \|^{2} = o(\gamma_{n}^{-p} ), 
	\;\;\;\;\; 
	d(f(\theta_{n} ), C ) = o(\gamma_{n}^{-p} )
\end{align*} 
w.p.1 on 
$\{\sup_{n\geq 0} \|\theta_{n} \| < \infty \}$. 
Moreover, 
the following is true: 
\begin{align*}
	\|\nabla f(\theta_{n} ) \|^{2}
	=
	o\big(\gamma_{n}^{-\hat{p} } \big),
	\;\;\; 
	d(f(\theta_{n} ), C )
	=
	o\big(\gamma_{n}^{-\hat{p} } \big),
	\;\;\; 
	d(\theta_{n}, S )
	=
	o\big(\gamma_{n}^{-\hat{q} } \big)
\end{align*} 
w.p.1 on 
$\{\sup_{n\geq 0} \|\theta_{n} \| < \infty \} 
\cap \{\hat{r} > r\}$,  
and 
\begin{align*}
	\|\nabla f(\theta_{n} ) \|^{2}
	=
	O\big(\gamma_{n}^{-\hat{p} } \big),
	\;\;\; 
	d(f(\theta_{n} ), C )
	=
	O\big(\gamma_{n}^{-\hat{p} } \big), 
	\;\;\; 
	d(\theta_{n}, S )
	=
	O\big(\gamma_{n}^{-\hat{q} } \big)
\end{align*} 
w.p.1 on 
$\{\sup_{n\geq 0} \|\theta_{n} \| < \infty \} 
\cap \{\hat{r} \leq r\}$. 
\end{theorem}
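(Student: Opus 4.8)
The plan is to deduce Theorem~\ref{theorem2.1} from Theorems~\ref{theorem1.1}--\ref{theorem1.2} and Corollaries~\ref{corollary1.1}--\ref{corollary1.2} by rewriting the Markovian recursion (\ref{2.1}) in the additive-noise form (\ref{1.1}) and then verifying that Assumptions~\ref{a1.1}--\ref{a1.2} follow from Assumptions~\ref{a2.1}--\ref{a2.4}. First I would set $w_{n} = F(\theta_{n},\xi_{n+1}) - \nabla f(\theta_{n})$, so that (\ref{2.1}) becomes exactly (\ref{1.1}). Assumption~\ref{a1.1} is immediate from the first and third clauses of Assumption~\ref{a2.1}. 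The crux is Assumption~\ref{a1.2}: I must show that on $\{\sup_{n\geq 0}\|\theta_{n}\| < \infty\}$ the random variable
\begin{align*}
	w = \limsup_{n\rightarrow\infty}\max_{n\leq k < a(n,1)}\left\|\sum_{i=n}^{k}\alpha_{i}\gamma_{i}^{r}w_{i}\right\|
\end{align*}
is finite w.p.1, and in fact (to get the $o(\cdot)$ conclusions rather than merely $O(\cdot)$) that $w = 0$ w.p.1 on that event.

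The key device is the Poisson-equation decomposition from Assumption~\ref{a2.2}: writing $\tilde{F}_{n} = \tilde{F}(\theta_{n},\xi_{n})$ and $(\Pi\tilde{F})_{n} = (\Pi\tilde{F})(\theta_{n},\xi_{n})$, we have $w_{n} = \tilde{F}_{n+1} - (\Pi\tilde{F})_{n+1} = \big(\tilde{F}_{n+1} - (\Pi\tilde{F})_{n}\big) + \big((\Pi\tilde{F})_{n} - (\Pi\tilde{F})_{n+1}\big)$. The first bracket is a martingale-difference term (with respect to the natural filtration), while the second is a "telescoping remainder" controlled by the Hölder-type modulus bound in Assumption~\ref{a2.3} together with the increment $\|\theta_{n+1} - \theta_{n}\| = \alpha_{n}\|F(\theta_{n},\xi_{n+1})\| \leq \alpha_{n}\varphi_{Q,s}(\xi_{n+1})$. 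Thus $\sum_{i=n}^{k}\alpha_{i}\gamma_{i}^{r}w_{i}$ splits into (i) a weighted martingale sum, (ii) an Abel-summation term picking up $|\alpha_{i+1}^{-1} - \alpha_{i}^{-1}|$-type differences (here the second clause of Assumption~\ref{a2.1} enters), and (iii) two boundary terms $\alpha_{n}\gamma_{n}^{r}(\Pi\tilde{F})_{n}$ and $\alpha_{k}\gamma_{k}^{r}(\Pi\tilde{F})_{k+1}$ which vanish as $n\to\infty$ since $\alpha_{n}\gamma_{n}^{r}\to 0$. On the localizing event I would replace $Q$ by a fixed large ball and use the stopping time $\tau_{Q}$ of Assumption~\ref{a2.4}: the moment bound $\sup_{n}E(\varphi_{Q,s}^{2}(\xi_{n})I_{\{\tau_{Q}\geq n\}}) < \infty$ gives square-integrability of all the relevant increments on $\{\tau_{Q} = \infty\}$, and then the square-summability $\sum_{n}\alpha_{n}^{2}\gamma_{n}^{2r} < \infty$ from Assumption~\ref{a2.1} makes the weighted martingale converge a.s.\ and forces the max-increments over windows $[n,a(n,1))$ to tend to $0$; a routine covering argument over a countable family of balls then removes the restriction to a fixed $Q$.

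Once Assumptions~\ref{a1.1}--\ref{a1.2} are established with $w = 0$ w.p.1 on $\{\sup_{n}\|\theta_{n}\| < \infty\}$, the conclusions follow mechanically: Assumptions~\ref{a1.3} and~\ref{a1.4} are assumed to hold for $f(\cdot)$ by hypothesis, so Corollary~\ref{corollary1.2} gives $\|\nabla f(\theta_{n})\|^{2} = o(\gamma_{n}^{-p})$ and $d(f(\theta_{n}),C) = o(\gamma_{n}^{-p})$ with $p = \min\{1,r\}$; and Corollary~\ref{corollary1.1}, specialized to $w = 0$, yields the $o(\gamma_{n}^{-\hat{p}})$, $o(\gamma_{n}^{-\hat{q}})$ rates on $\{\sup_{n}\|\theta_{n}\| < \infty\}\cap\{\hat{r} > r\}$ (this is exactly the set $\{w = 0,\hat{r} > r\}$ intersected with stability) and the $O(\gamma_{n}^{-\hat{p}})$, $O(\gamma_{n}^{-\hat{q}})$ rates on its complement within the stability event, i.e.\ on $\{\sup_{n}\|\theta_{n}\| < \infty\}\cap\{\hat{r}\leq r\}$. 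The main obstacle, and where the real work lies, is step (ii): carefully carrying out the Abel/Poisson bookkeeping so that the Hölder exponent $s$ in Assumption~\ref{a2.3} combines with the $\gamma_{i}^{r}$-weights and the $\alpha$-regularity of Assumption~\ref{a2.1} to produce terms that are genuinely $\ell^{1}$ or $\ell^{2}$ summable after weighting, rather than merely bounded — this is the technically delicate part that justifies the $o$ (not just $O$) rates and that distinguishes the Markovian analysis from the martingale-difference case.
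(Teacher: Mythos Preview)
Your approach is essentially the paper's: rewrite (\ref{2.1}) as (\ref{1.1}) with $w_{n}=F(\theta_{n},\xi_{n+1})-\nabla f(\theta_{n})$, use the Poisson decomposition from Assumption~\ref{a2.2} to show $\sum_{n}\alpha_{n}\gamma_{n}^{r}w_{n}$ converges a.s.\ on each $\{\tau_{Q}=\infty\}$ (hence $w=0$), and then invoke Corollaries~\ref{corollary1.1}--\ref{corollary1.2}. Two small points where the paper's execution differs from your sketch are worth noting. First, your indexing is off: with $\tilde{F}_{n}=\tilde{F}(\theta_{n},\xi_{n})$ you do \emph{not} get $w_{n}=\tilde{F}_{n+1}-(\Pi\tilde{F})_{n+1}$, since $w_{n}=\tilde{F}(\theta_{n},\xi_{n+1})-(\Pi\tilde{F})(\theta_{n},\xi_{n+1})$ carries $\theta_{n}$, not $\theta_{n+1}$; and your second bracket mixes a $\theta$-change and a $\xi$-change, so it is neither purely telescoping nor purely H\"older-controlled. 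The paper instead splits into three pieces
\[
w_{1,n}=\tilde{F}(\theta_{n},\xi_{n+1})-(\Pi\tilde{F})(\theta_{n},\xi_{n}),\quad
w_{2,n}=(\Pi\tilde{F})(\theta_{n},\xi_{n})-(\Pi\tilde{F})(\theta_{n-1},\xi_{n}),\quad
w_{3,n}=-(\Pi\tilde{F})(\theta_{n},\xi_{n+1}),
\]
so that $w_{n}=w_{1,n}+w_{2,n}+(w_{3,n}-w_{3,n-1})$, and then Abel-sums only the $w_{3}$ piece. Second, the ``technically delicate part'' you flag is isolated in the paper as a separate lemma (Lemma~\ref{lemma11.1}): via H\"older's inequality with exponents $p=(2+2r)/(2+r)$, $q=(2+2r)/r$ one gets $\sum_{n}\alpha_{n}^{1+s}\gamma_{n}^{r}<\infty$ for $s=(2+r)/(2+2r)$, which is exactly what is needed to make $\sum_{n}\alpha_{n}\gamma_{n}^{r}\|w_{2,n}\|$ summable after bounding $\|w_{2,n}\|\leq\alpha_{n-1}^{s}\varphi_{Q,s}^{2}(\xi_{n})$.
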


The proof is provided in Section \ref{section2*}. 
$C, S, p, \hat{p}, \hat{q}$ and $\hat{r}$
are defined in Section \ref{section1}. 

Assumption \ref{a2.1} is related to the sequence  
$\{\alpha_{n} \}_{n\geq 0}$. 
It holds if 
$\alpha_{n} = 1/n^{a}$ for $n\geq 1$, 
where $a \in (1/2,1]$ is a constant. 
On the other side,  
Assumptions \ref{a2.2} -- \ref{a2.4} 
correspond to   
the stochastic process 
$\{\xi_{n} \}_{n\geq 0}$
and are quite standard for the asymptotic analysis of 
stochastic approximation algorithms with Markovian dynamics. 
Assumptions \ref{a2.2} -- \ref{a2.4} have been introduced by 
Metivier and Priouret in \cite{metivier&priouret1} 
(see also \cite[Part II]{benveniste}), 
and later generalized by Kushner and his co-workers 
(see \cite{kushner&yin} and references cited therein). 
However, 
neither the results of Metivier and Priouret, 
nor the results of Kushner and his co-workers 
provide any information on the convergence rate 
of stochastic gradient search in 
the case of multiple, non-isolated 
minima.  

Regarding Theorem \ref{theorem2.1}, 
the following note is also in order.  
As already mentioned in the beginning of the section, 
the purpose of the theorem 
is illustrating the results of Theorem \ref{theorem1.1} 
and providing a framework for studying the examples 
presented in the next sections. 
Since these examples perfectly fit into the framework 
developed by Metivier and Priouret, 
more general assumptions and settings of 
\cite{kushner&yin} are not considered here 
in order just to keep the exposition as concise as possible. 

\section{Example 1: Supervised Learning} \label{section4} 

In this section, 
online algorithms for supervised learning in  
feedforward neural networks are analyzed using the 
results of Theorems \ref{theorem1.2} and \ref{theorem2.1}. 

To state the problem of supervised learning  
and to define the corresponding algorithms, we need the following notation. 
$N_{1}$ and $N_{2}$ are positive integers, 
while $d_{\theta } = N_{1} (N_{2} + 1 )$.  
$\phi_{1}, \phi_{2}: \mathbb{R} \rightarrow \mathbb{R}$ are differentiable functions, 
while 
$\psi_{1},\dots,\psi_{N_{2} }: \mathbb{R}^{d_{x} } \rightarrow \mathbb{R}$ are 
Borel-measurable functions. 
For $a'_{1},\dots,a'_{N_{1} } \in \mathbb{R}$, 
$a''_{1,1},\dots,a''_{N_{1},N_{2} } \in \mathbb{R}$, 
$x \in \mathbb{R}^{d_{x} }$, let  
\begin{align*}
	G_{\theta }(x) 
	=
	\phi_{1}\left(
	\sum_{i_{1}=1}^{N_{1} } a'_{i_{1} } 
	\phi_{2}\left(
	\sum_{i_{2}=1}^{N_{2}}
	a''_{i_{1},i_{2} } \psi_{i_{2}}(x) 
	\right)
	\right), 
\end{align*}
where $\theta = [a'_{1} \cdots a'_{N_{1}} \; a''_{1,1} \cdots a''_{N_{1},N_{2} }]^{T}$. 
Moreover, $\pi(\cdot,\cdot )$ denotes a probability measure 
on $\mathbb{R}^{d_{x} } \times \mathbb{R}$, 
while 
\begin{align*}
	f(\theta ) 
	= 
	\frac{1}{2}
	\int (y-G_{\theta }(x) )^{2} \pi(dx,dy)
\end{align*}
for $\theta \in \mathbb{R}^{d_{\theta } }$. 
Then, the mean-square error based supervised learning
in feedforward neural networks can be described as the minimization 
of $f(\cdot )$
in a situation when only samples from 
$\pi(\cdot,\cdot )$ are available. 
In this context, $G_{\theta }(\cdot )$ represents 
the input-output function 
(i.e., the architecture) 
of the feedforward neural network to be trained. 
$\phi_{1}(\cdot )$ and $\phi_{2}(\cdot )$ are the network activation functions, 
while 
$\theta$ is the vector of the network parameters to be tuned through the process 
of supervised learning. 
For more details on neural networks and supervised learning, 
see e.g., \cite{hastie&tibshirani&friedman}, 
\cite{haykin} and references cited therein. 

Function $f(\cdot )$ is usually minimized by 
the following stochastic gradient algorithm: 
\begin{align} \label{4.1} 
	\theta_{n+1} 
	=
	\theta_{n} 
	+
	\alpha_{n} 
	(y_{n} - G_{\theta_{n} }(x_{n} ) ) H_{\theta_{n} }(x_{n} ), 
	\;\;\; 
	n\geq 0. 
\end{align}
In this recursion, 
$\{\alpha_{n} \}_{n\geq 0}$
is a sequence of positive real numbers, 
while 
$H_{\theta }(\cdot ) = \nabla_{\theta } G_{\theta }(\cdot )$. 
$\theta_{0}$
is an $\mathbb{R}^{d_{\theta } }$-valued random variable
defined on a probability space $(\Omega, {\cal F}, P)$,  
while 
$\{(x_{n}, y_{n} ) \}_{n\geq 0}$
is an $\mathbb{R}^{d_{\theta } } \times \mathbb{R}$-valued 
stochastic process defined on the same probability space. 
In the context of supervised learning, 
$\{x_{n}, y_{n} \}_{n\geq 0}$
is regarded to as a training sequence. 

The asymptotic behavior of algorithm (\ref{4.1}) is analyzed under the 
following assumptions: 

\begin{assumption} \label{a4.1}
$\phi_{1}(\cdot )$ and $\phi_{2}(\cdot )$ are real-analytic. 
Moreover, 
$\phi_{1}(\cdot )$ and $\phi_{2}(\cdot )$ have 
(complex-valued) continuations 
$\hat{\phi}_{1}(\cdot )$ and $\hat{\phi}_{2}(\cdot )$ 
(respectively) with the following properties: 
\begin{romannum}
\item
$\phi_{1}(z)$ and $\phi_{2}(z)$ map $z\in \mathbb{C}$ into $\mathbb{C}$
($\mathbb{C}$ denotes the set of complex numbers). 
\item
$\hat{\phi}_{1}(x) = \phi_{1}(x)$
and 
$\hat{\phi}_{2}(x) = \phi_{2}(x)$
for all $x\in \mathbb{R}$. 
\item
There exist real numbers
$\varepsilon \in (0,1)$, $K \in [1,\infty )$
such that 
$\hat{\phi}_{1}(\cdot )$ and $\hat{\phi}_{2}(\cdot )$
are analytic
on $\hat{V}_{\varepsilon } = 
\{z\in \mathbb{C}: d(z, \mathbb{R} ) \leq \varepsilon \}$, and such that 
\begin{align*}
	&
	|\hat{\phi}_{1}(z) |
	\leq 
	K (1 + |z| ), 
	\\
	&
	\max\{|\hat{\phi}'_{1}(z) |, |\hat{\phi}_{2}(z) |, |\hat{\phi}'_{2}(z) | \}
	\leq 
	K
\end{align*} 
for all $z \in \hat{V}_{\varepsilon }$
($\hat{\phi}_{1}(\cdot )$, $\hat{\phi}_{2}(\cdot )$
are the derivatives of $\hat{\phi}_{1}(\cdot )$, $\hat{\phi}_{2}(\cdot )$). 
\end{romannum}
\end{assumption} 

\begin{assumption} \label{a4.2}
$\{(x_{n}, y_{n} ) \}_{n\geq 0}$ are i.i.d. random variables 
distributed according 
the probability measure $\pi(\cdot, \cdot )$. 
There exists a real number $L \in [1,\infty )$ such that 
$\max_{1\leq k \leq N_{2} } |\psi_{k}(x_{0})| \leq L$ and $|y_{0} | \leq L$ w.p.1. 
\end{assumption}

Our main results on the properties of 
objective function $f(\cdot )$
and algorithm (\ref{4.1})
are contained in the next two theorems. 

\begin{theorem} \label{theorem4.1}
Let Assumptions \ref{a4.1} and \ref{a4.2} hold.
Then, $f(\cdot )$ is analytic on entire $\mathbb{R}^{d_{\theta } }$, 
i.e., 
it satisfies Assumptions \ref{a1.3} and \ref{a1.4}. 
\end{theorem}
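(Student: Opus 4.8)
The plan is to show that $f(\cdot)$ is real-analytic on $\mathbb{R}^{d_\theta}$ by constructing, for each bounded subset of $\mathbb{R}^{d_\theta}$, a holomorphic extension of $f$ to a complex tube around it, and then to invoke the classical fact --- already quoted in the paper --- that analytic functions satisfy the Lojasiewicz inequalities, i.e.\ Assumptions~\ref{a1.3} and~\ref{a1.4}.

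\emph{Step 1: complex extension of $G_\theta(x)$.} Fix $R\in(1,\infty)$ and define $\hat G_\theta(x)=\hat\phi_1\big(\sum_{i_1}a'_{i_1}\hat\phi_2\big(\sum_{i_2}a''_{i_1,i_2}\psi_{i_2}(x)\big)\big)$ for complex $\theta$. I would show that $\hat G_\theta(x)$ is holomorphic in $\theta$ on a tube $T_R=\{\theta\in\mathbb{C}^{d_\theta}:\|\mathrm{Re}\,\theta\|<R,\ \|\mathrm{Im}\,\theta\|<\eta_R\}$, with $\eta_R\in(0,1)$ chosen small (depending only on $R,\varepsilon,K,L,N_1,N_2$), and bounded there uniformly in $x$ over $\{\max_k|\psi_k(x)|\le L\}$. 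The mechanism: since $|\psi_{i_2}(x)|\le L$ by Assumption~\ref{a4.2}, the inner arguments $z_{i_1}=\sum_{i_2}a''_{i_1,i_2}\psi_{i_2}(x)$ have imaginary part $O(\eta_R)$, hence lie in $\hat V_\varepsilon$ for $\eta_R$ small, so $\hat\phi_2(z_{i_1})$ is defined with $|\hat\phi_2(z_{i_1})|\le K$; and because $\hat\phi_2$ is real on $\mathbb{R}$ and $|\hat\phi_2'|\le K$ on $\hat V_\varepsilon$, we also get $|\mathrm{Im}\,\hat\phi_2(z_{i_1})|\le K|\mathrm{Im}\,z_{i_1}|=O(\eta_R)$. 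Feeding this into the outer layer, the argument of $\hat\phi_1$ has imaginary part $O(R\,\eta_R)$, which is $\le\varepsilon$ once $\eta_R$ is small enough, so $\hat\phi_1$ applies and the growth bound $|\hat\phi_1(z)|\le K(1+|z|)$ gives a bound on $|\hat G_\theta(x)|$ depending on $R$ but not on $x$. Holomorphy in $\theta$ is immediate, $\hat G_\theta(x)$ being a composition of the holomorphic $\hat\phi_1,\hat\phi_2$ with polynomials in $\theta$.

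\emph{Step 2: complex extension of $f$.} Set $\hat f(\theta)=\tfrac12\int(y-\hat G_\theta(x))^2\,\pi(dx,dy)$ on $T_R$. By Step~1 together with $|y_0|\le L$, the integrand is bounded uniformly in $(x,y)$ and in $\theta$ on compact subsets of $T_R$ by a deterministic constant, and $\pi$ is a probability measure, so $\hat f$ is well-defined; dominated convergence gives continuity in $\theta$, and Morera's theorem combined with Fubini (legitimate by the uniform bound) gives holomorphy in each coordinate of $\theta$ separately. Separate holomorphy together with continuity yields joint holomorphy (Osgood's lemma). Since $\hat G_\theta(x)=G_\theta(x)$, and hence $\hat f(\theta)=f(\theta)$, for real $\theta$, restricting $\hat f$ to $T_R\cap\mathbb{R}^{d_\theta}=\{\|\theta\|<R\}$ shows $f$ is real-analytic there; letting $R\to\infty$ gives analyticity of $f$ on all of $\mathbb{R}^{d_\theta}$.

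\emph{Step 3: Lojasiewicz inequalities, and the main obstacle.} Once $f$ is analytic, Assumption~\ref{a1.3} follows from the Lojasiewicz gradient inequality in the exponent form of \cite[Theorem \L I, p.~775]{kurdyka}, and Assumption~\ref{a1.4} from the Lojasiewicz inequality comparing $d(\theta,S)=d(\theta,\{\nabla f=0\})$ with $\|\nabla f(\theta)\|$; by the Remark following Assumption~\ref{a1.4} it suffices to establish these in their local form near $S$, which the appendix upgrades to the stated global form. I expect the only genuine subtlety to be the domain bookkeeping in Steps~1--2 --- in particular the fact that the imaginary width $\eta_R$ must shrink as $R\to\infty$, so there is no single tube around all of $\mathbb{R}^{d_\theta}$ --- which is harmless precisely because real-analyticity is a local property. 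The passage from analyticity to the Lojasiewicz inequalities is entirely standard and already referenced in the paper.
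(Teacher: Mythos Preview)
Your proposal is correct and follows essentially the same route as the paper: construct a holomorphic extension $\hat G_\theta(x)$ by tracking the arguments through the two layers to keep them in $\hat V_\varepsilon$, integrate against $\pi$ to get a holomorphic $\hat f$, and conclude real-analyticity of $f$; the Lojasiewicz inequalities then follow from the references already cited. The only cosmetic differences are that the paper works with a complex ball $\hat U_\theta$ of radius $\delta_\theta=\varepsilon/(2KLN_1N_2(1+\|\theta\|))$ around each real point rather than your tubes $T_R$, and it bounds $\nabla_\eta\hat G_\eta(x)$ directly and invokes dominated convergence for differentiability rather than going through Morera/Fubini/Osgood; the substance is the same.
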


\begin{theorem} \label{theorem4.2}
Let Assumptions \ref{a2.1}, \ref{a4.1} and \ref{a4.2} hold. 
Then, 
\begin{align*}
	\|\nabla f(\theta_{n} ) \|^{2} = o(\gamma_{n}^{-p} ), 
	\;\;\;\;\; 
	d(f(\theta_{n} ), C ) = o(\gamma_{n}^{-p} )
\end{align*}
w.p.1 on 
$\{\sup_{n\geq 0} \|\theta_{n} \| < \infty \}$. 
Moreover, 
the following is true: 
\begin{align*}
	\|\nabla f(\theta_{n} ) \|^{2}
	=
	o\big(\gamma_{n}^{-\hat{p} } \big),
	\;\;\; 
	d(f(\theta_{n} ), C )
	=
	o\big(\gamma_{n}^{-\hat{p} } \big),
	\;\;\; 
	d(\theta_{n}, S )
	=
	o\big(\gamma_{n}^{-\hat{q} } \big)
\end{align*} 
w.p.1 on 
$\{\sup_{n\geq 0} \|\theta_{n} \| < \infty \} \cap \{\hat{r} > r\}$,  
and 
\begin{align*}
	\|\nabla f(\theta_{n} ) \|^{2}
	=
	O\big(\gamma_{n}^{-\hat{p} } \big),
	\;\;\; 
	d(f(\theta_{n} ), C )
	=
	O\big(\gamma_{n}^{-\hat{p} } \big), 
	\;\;\; 
	d(\theta_{n}, S )
	=
	O\big(\gamma_{n}^{-\hat{q} } \big)
\end{align*} 
w.p.1 on 
$\{\sup_{n\geq 0} \|\theta_{n} \| < \infty \} \cap \{\hat{r} \leq r\}$. 
\end{theorem}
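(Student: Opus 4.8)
The plan is to recognize algorithm (\ref{4.1}) as a special instance of the Markovian recursion (\ref{2.1}) and then appeal to Theorem \ref{theorem2.1}. Setting $\xi_{n+1} = (x_{n}, y_{n})$ and $F(\theta, (x,y)) = -(y - G_{\theta}(x)) H_{\theta}(x)$, recursion (\ref{4.1}) takes exactly the form (\ref{2.1}). Since $\{(x_{n}, y_{n})\}_{n\geq 0}$ are i.i.d.\ with law $\pi(\cdot,\cdot)$ by Assumption \ref{a4.2}, the process $\{\xi_{n}\}_{n\geq 0}$ is a (degenerate) Markov process with transition kernel $\Pi_{\theta}(\xi, \cdot) = \pi(\cdot)$ independent of both $\xi$ and $\theta$. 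It thus suffices to verify Assumptions \ref{a2.1} -- \ref{a2.4} for this $F$ and $\{\xi_{n}\}_{n\geq 0}$, together with Assumptions \ref{a1.3} and \ref{a1.4} for $f(\cdot)$. Assumptions \ref{a1.3} and \ref{a1.4} are supplied directly by Theorem \ref{theorem4.1} (analyticity of $f$), while Assumption \ref{a2.1} is one of the hypotheses of Theorem \ref{theorem4.2}.

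For Assumption \ref{a2.2} I would take $\tilde{F} = F$. Differentiating under the integral sign, which is routine given the strip bounds of Assumption \ref{a4.1} and the a.s.\ boundedness of the data in Assumption \ref{a4.2}, gives $\nabla f(\theta) = -\int (y - G_{\theta}(x)) H_{\theta}(x)\,\pi(dx,dy) = (\Pi F)(\theta,\xi)$, whence $F(\theta,\xi) - \nabla f(\theta) = \tilde{F}(\theta,\xi) - (\Pi\tilde{F})(\theta,\xi)$; and $\nabla f(\cdot)$ is locally Lipschitz since it is analytic. A by-product is that $(\Pi\tilde{F})(\theta,\xi) = \nabla f(\theta)$ does not depend on $\xi$, which reduces the Hölder-continuity requirement in Assumption \ref{a2.3} to the local Lipschitzness of $\nabla f$: on a compact $Q$ one gets $\|(\Pi\tilde{F})(\theta',\xi) - (\Pi\tilde{F})(\theta'',\xi)\| \leq C_{Q}\|\theta'-\theta''\| \leq C_{Q}(\mathrm{diam}\,Q)^{1-s}\|\theta'-\theta''\|^{s}$.

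The remaining content of Assumptions \ref{a2.3} and \ref{a2.4} is the construction of a dominating function $\varphi_{Q,s}$. Expanding $G_{\theta}(x)$ and $H_{\theta}(x) = \nabla_{\theta}G_{\theta}(x)$ explicitly and using $|\hat{\phi}_{1}(z)|\le K(1+|z|)$ together with $\max\{|\hat{\phi}_{1}'(z)|,|\hat{\phi}_{2}(z)|,|\hat{\phi}_{2}'(z)|\}\le K$ from Assumption \ref{a4.1}, one obtains, for $\theta\in Q$, a majorant of $\max\{\|F(\theta,\xi)\|,\|\tilde{F}(\theta,\xi)\|,\|(\Pi\tilde{F})(\theta,\xi)\|\}$ that is a polynomial in $|y|$ and $\max_{1\le k\le N_{2}}|\psi_{k}(x)|$ with coefficients depending only on $Q$ (through a bound on $\|\theta\|$ on $Q$); adding $1$ to this polynomial yields $\varphi_{Q,s}$ and settles Assumption \ref{a2.3}. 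For Assumption \ref{a2.4}, since $\xi_{n}$ has law $\pi$ for $n\ge 1$ while $\max_{k}|\psi_{k}(x_{0})|\le L$ and $|y_{0}|\le L$ a.s.\ by Assumption \ref{a4.2}, the integral $\int \varphi_{Q,s}^{2}(x,y)\,\pi(dx,dy)$ is finite, so $\sup_{n\ge 0} E\big(\varphi_{Q,s}^{2}(\xi_{n}) I_{\{\tau_{Q}\ge n\}}\mid \theta_{0}=\theta,\xi_{0}=\xi\big)<\infty$. With Assumptions \ref{a2.1} -- \ref{a2.4}, \ref{a1.3} and \ref{a1.4} all in force, Theorem \ref{theorem2.1} delivers the claimed rates. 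The only genuinely delicate step is the bookkeeping in the construction of $\varphi_{Q,s}$: because the $\psi_{k}$ are merely Borel-measurable, $\varphi_{Q,s}$ must be allowed to grow in $x$, and the finiteness of its $\pi$-second moment then hinges on the a.s.\ boundedness of the training data in Assumption \ref{a4.2}.
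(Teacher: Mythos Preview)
Your proposal is correct and follows essentially the same route as the paper: recast (\ref{4.1}) as an instance of the Markovian scheme (\ref{2.1}) with the i.i.d.\ kernel $\Pi_{\theta}(\xi,\cdot)=\pi(\cdot)$, verify Assumptions \ref{a2.2}--\ref{a2.4} (using Theorem \ref{theorem4.1} for Assumptions \ref{a1.3}, \ref{a1.4}), and invoke Theorem \ref{theorem2.1}. The paper's own proof is terser---it simply asserts that the required assumptions are ``straightforward'' in the i.i.d.\ setting---whereas you have spelled out the choice $\tilde{F}=F$, the identity $(\Pi\tilde{F})(\theta,\xi)=\nabla f(\theta)$, and the construction of $\varphi_{Q,s}$; these details are all sound.
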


The proofs are provided in Section \ref{section4*}. 
$C, S, p, \hat{p}, \hat{q}$ and $\hat{r}$
are defined in Section \ref{section1}. 

Assumption \ref{a4.1} is related to the neural network being trained. 
It covers some of the most popular feedforward architectures 
such as backpropagation networks with logistic activations\footnote
{Since 
\begin{align*}
	|1 + \exp(-z) |^{2}
	=
	1 
	+ 
	\exp(-2\text{Re}(z) )
	+
	2 \exp(-\text{Re}(z) ) \cos(\text{Im}(z) )
	\geq 
	1 + \exp(-2\text{Re}(z) )
\end{align*}
when $|\text{Im}(z) | \leq \pi/2$, 
complex-valued logistic function 
$h(z) = (1 + \exp(-z) )^{-1}$
is analytical on 
$\{z\in \mathbb{C}: d(z,\mathbb{R}) \leq \pi/2 \}$. 
Due to the same reason, 
$\max\{|h(z)|, |h'(z) | \} \leq 1$
on $\{z\in \mathbb{C}: d(z,\mathbb{R}) \leq \pi/2 \}$. 
} 
and 
radial basis function networks with Gaussian activations\footnote
{Complex-valued Gaussian activation 
$h(z) = (2\pi )^{-1/2} \exp(-z^{2}/2 )$
is analytical on entire $\mathbb{C}$. 
As
\begin{align*}
	(1 + |z| ) \exp(-z^{2}/2 )
	\leq 
	(1 + |\text{Re}(z) | + |\text{Im}(z) | )
	\exp(- \text{Re}^{2}(z)/2 + \text{Im}^{2}(z)/2 )
	\leq 
	3e
\end{align*}
when $|\text{Im}(z) | \leq 1$, 
we have 
$\max\{|h(z)|,|h'(z)| \} \leq 3e$
on $\{z\in \mathbb{C}: d(z,\mathbb{R}) \leq 1 \}$. 
}. 
On the other side, Assumption \ref{a4.2} corresponds to the 
training sequence 
$\{x_{n}, y_{n} \}_{n\geq 0}$, 
and is quite common for the analysis of supervised learning. 

The asymptotic properties of supervised learning algorithms have been 
studied in a large number of papers 
(see \cite{hastie&tibshirani&friedman}, \cite{haykin} 
and references cited therein). 
Unfortunately, the available literature does not provide 
any information on the rate of convergence which can be 
verified for the feedforward networks with nonlinear 
activation functions. 
The main difficulty comes out of the fact that 
the existing results on the convergence rate of stochastic 
gradient search require the objective function to have an isolated 
minimum at which the Hessian is strictly positive definite. 
Since the objective function is highly nonlinear in the case of 
supervised learning algorithms, 
it is hard (if not impossible) to show even the existence 
of isolated minima, let alone the definiteness of the Hessian. 
As opposed to the existing results, 
Theorem \ref{theorem4.2} does not invoke any of these requirements 
and covers some of the most widely used feedforward neural networks. 

\section{Example 2: Temporal Difference Learning} \label{section5} 

In this section, the results of Theorems \ref{theorem1.2} and 
\ref{theorem2.1} are illustrated by 
applying them to the analysis of temporal-difference learning 
algorithms. 

In order to explain temporal-difference learning and 
to define the corresponding algorithms, 
we use the following notation. 
$N > 1$ is an integer, 
while 
${\cal X} = \{1,\dots,N \}$. 
$\{x_{n} \}_{n\geq 0}$
is an ${\cal X}$-valued Markov chain defined on a 
probability space 
$(\Omega, {\cal F}, P )$, while 
$\{ c(i) \}_{i\in {\cal X} }$
are real numbers.  
$\beta \in (0,1)$ is a constant, while
\begin{align*}
	g(i)
	=
	E\left(\left. 
	\sum_{n=0}^{\infty } \beta^{n} c(x_{n} ) 
	\right| x_{0}= i
	\right)
\end{align*}
for $i\in {\cal X}$. 
For each $i\in {\cal X}$, 
$G_{\theta }(i)$ is a real-valued differentiable function of 
$\theta \in \mathbb{R}^{d_{\theta } }$, 
while 
\begin{align*}
	f(\theta )
	=
	\frac{1}{2}
	\lim_{n\rightarrow \infty } 
	E(g(x_{n} ) - G_{\theta }(x_{n} ) )^{2}
\end{align*}
for $\theta\in \mathbb{R}^{d_{\theta } }$. 
With this notation, the problem of temporal-difference learning can be 
posed as the minimization of 
$f(\cdot )$ in a situation when only a realization of 
$\{x_{n} \}_{n\geq 0}$ is available. 
In this context, 
$c(i)$ is considered as a cost of visiting state
$i$, 
while 
$g(i)$ is regarded to as a total discounted cost incurred by 
$\{x_{n} \}_{n\geq 0}$ 
when $\{x_{n} \}_{n\geq 0}$ starts from state $i$. 
$G_{\theta }(\cdot )$
is a parameterized approximation of $g(\cdot )$, 
while $\theta$ is the parameter to be tuned 
through the process of temporal-difference learning. 
For more details on temporal-difference learning, 
see e.g., 
\cite{bertsekas&tsitsiklis1}, \cite{powell}, \cite{sutton&barto} 
and references cited therein. 

Function $f(\cdot )$ can be minimized by the following 
algorithm: 
\begin{align}
	& \label{5.1}
	\theta_{n+1} 
	=
	\theta_{n} 
	+
	\alpha_{n} 
	(c(x_{n} ) + \beta G_{\theta_{n} }(x_{n+1} ) - G_{\theta_{n} }(x_{n} ) )
	y_{n}, 
	\\
	& \label{5.3}
	y_{n+1}
	=
	\beta y_{n} 
	+
	H_{\theta_{n} }(x_{n+1} ), 
	\;\;\; 
	n\geq 0. 
\end{align}
In this recursion, 
$\{\alpha_{n} \}_{n\geq 0}$ is a sequence of positive reals, 
while 
$H_{\theta }(\cdot ) = \nabla_{\theta } G_{\theta }(\cdot )$. 
$\theta_{0}$ is an $\mathbb{R}^{d_{\theta } }$-valued random variable, 
which is defined on probability space 
$(\Omega, {\cal F}, P )$ and independent of 
$\{x_{n} \}_{n\geq 0}$. 
In the literature on reinforcement learning, 
recursion (\ref{5.1}), (\ref{5.3}) is known as 
$TD(1)$ temporal-difference learning algorithm
with a nonlinear function approximation, 
while $G_{\theta }(\cdot )$ is referred to as 
a function approximation, or just as an `approximator.'

We analyze algorithm 
(\ref{5.1}), (\ref{5.3}) under the following assumptions: 

\begin{assumption} \label{a5.1}
$\{x_{n} \}_{n\geq 0}$ is geometrically ergodic. 
\end{assumption} 

\begin{assumption} \label{a5.2} 
For each $i$, $G_{\theta }(i)$ is analytic in $\theta$
on entire $\mathbb{R}^{d_{\theta } }$. 
\end{assumption}

Our main results on the properties of 
$f(\cdot )$
and asymptotic behavior of the algorithm 
(\ref{5.1}), (\ref{5.3}) are presented in the next two theorems. 

\begin{theorem} \label{theorem5.1}
Let Assumptions \ref{a5.1} and \ref{a5.2} hold.
Then, $f(\cdot )$ is analytic on entire $\mathbb{R}^{d_{\theta } }$, 
i.e., 
it satisfies Assumptions \ref{a1.3} and \ref{a1.4}. 
\end{theorem}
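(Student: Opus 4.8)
Theorem \ref{theorem5.1} asks us to prove that the temporal-difference objective function

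\begin{align*}
	f(\theta )
	=
	\frac{1}{2}
	\lim_{n\rightarrow \infty }
	E(g(x_{n} ) - G_{\theta }(x_{n} ) )^{2}
\end{align*}

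is real-analytic on $\mathbb{R}^{d_{\theta } }$. The plan is to rewrite $f(\cdot )$ as a finite sum against the invariant distribution and then invoke closure of real-analytic functions under finite sums, products and compositions. First I would use Assumption \ref{a5.1} (geometric ergodicity of $\{x_{n} \}_{n\geq 0}$ on the finite state space $\mathcal{X} = \{1,\dots,N\}$) to conclude that $\{x_{n} \}_{n\geq 0}$ has a unique invariant distribution $\pi = (\pi(1),\dots,\pi(N))$ and that $\lim_{n\to\infty} P(x_n = i) = \pi(i)$ for every $i$, so that the limit defining $f(\cdot )$ exists and equals

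\begin{align*}
	f(\theta )
	=
	\frac{1}{2}
	\sum_{i=1}^{N} \pi(i) \, (g(i) - G_{\theta }(i) )^{2}.
\end{align*}

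Here the numbers $\pi(i)$ and $g(i)$ are fixed constants independent of $\theta$ (note $g(i)$ is a well-defined finite real number since $\beta \in (0,1)$ and $\mathcal{X}$ is finite, so $g = (I - \beta P)^{-1} c$).

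Next I would assemble the analyticity claim from the building blocks. By Assumption \ref{a5.2}, for each $i \in \mathcal{X}$ the map $\theta \mapsto G_{\theta }(i)$ is real-analytic on all of $\mathbb{R}^{d_{\theta } }$. The map $\theta \mapsto g(i) - G_{\theta }(i)$ is then analytic (difference of an analytic function and a constant), its square is analytic (product of analytic functions), multiplication by the constant $\frac{1}{2}\pi(i)$ preserves analyticity, and a finite sum over $i = 1,\dots,N$ of analytic functions is analytic. Hence $f(\cdot )$ is real-analytic on $\mathbb{R}^{d_{\theta } }$. Finally, since a real-analytic function on an open set satisfies the Lojasiewicz inequalities of Assumptions \ref{a1.3} and \ref{a1.4} (as recorded in the discussion following those assumptions, citing \cite{bierstone&milman}, \cite{lojasiewicz2}, \cite{kurdyka}), the stated conclusion follows; here the relevant open vicinity of $S$ in the remark after Assumption \ref{a1.4} is simply all of $\mathbb{R}^{d_{\theta } }$.

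The only genuinely non-trivial step is the first one: justifying that the limit in the definition of $f(\cdot )$ exists and identifying it with the stationary average $\sum_i \pi(i)(g(i)-G_\theta(i))^2$. Geometric ergodicity gives $|P(x_n = i) - \pi(i)| \le C\rho^n$ for some $\rho \in (0,1)$ uniformly in $i$, and since $\mathcal{X}$ is finite the quantities $(g(i) - G_\theta(i))^2$ are bounded for each fixed $\theta$, so $E(g(x_n) - G_\theta(x_n))^2 = \sum_i P(x_n=i)(g(i)-G_\theta(i))^2 \to \sum_i \pi(i)(g(i)-G_\theta(i))^2$; this is a routine estimate. One should also note that the convergence need only be verified pointwise in $\theta$, since the limit function's analyticity is then established directly from its closed-form expression rather than by any limiting argument in $\theta$. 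Everything after the closed-form representation is a textbook application of the permanence properties of analytic functions and of the Lojasiewicz inequalities already cited in Section \ref{section1}.
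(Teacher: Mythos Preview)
Your proposal is correct and follows essentially the same approach as the paper: the paper's proof simply writes $f(\theta) = \tfrac{1}{2}\sum_{i=1}^{N}\pi(i)(g(i)-G_\theta(i))^2$ and invokes Assumption~\ref{a5.2} to conclude analyticity. You have supplied more detail than the paper does (the justification of the limit via geometric ergodicity and the explicit reduction to Lojasiewicz inequalities), but the argument is the same.
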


\begin{theorem} \label{theorem5.2}
Let Assumptions \ref{a2.1}, \ref{a5.1} and \ref{a5.2} hold. 
Then, 
\begin{align*}
	\|\nabla f(\theta_{n} ) \|^{2} = o(\gamma_{n}^{-p} ), 
	\;\;\; 
	d(f(\theta_{n} ), C ) = o(\gamma_{n}^{-p} )
\end{align*}	
w.p.1 on 
$\{\sup_{n\geq 0} \|\theta_{n} \| < \infty \}$. 
Moreover, 
the following is true: 
\begin{align*}
	\|\nabla f(\theta_{n} ) \|^{2}
	=
	o\big(\gamma_{n}^{-\hat{p} } \big),
	\;\;\; 
	d(f(\theta_{n} ), C )
	=
	o\big(\gamma_{n}^{-\hat{p} } \big),
	\;\;\; 
	d(\theta_{n}, S )
	=
	o\big(\gamma_{n}^{-\hat{q} } \big)
\end{align*} 
w.p.1 on 
$\{\sup_{n\geq 0} \|\theta_{n} \| < \infty \} \cap \{\hat{r} > r\}$,  
and 
\begin{align*}
	\|\nabla f(\theta_{n} ) \|^{2}
	=
	O\big(\gamma_{n}^{-\hat{p} } \big),
	\;\;\; 
	d(f(\theta_{n} ), C )
	=
	O\big(\gamma_{n}^{-\hat{p} } \big), 
	\;\;\; 
	d(\theta_{n}, S )
	=
	O\big(\gamma_{n}^{-\hat{q} } \big)
\end{align*} 
w.p.1 on 
$\{\sup_{n\geq 0} \|\theta_{n} \| < \infty \} \cap \{\hat{r} \leq r\}$. 
\end{theorem}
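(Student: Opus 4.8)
The plan is to obtain Theorem~\ref{theorem5.2} by fitting the recursion (\ref{5.1}), (\ref{5.3}) into the Markovian framework of Section~\ref{section2} and invoking Theorem~\ref{theorem2.1}. Assumption~\ref{a2.1} is granted by hypothesis, while Theorem~\ref{theorem5.1} (which rests on Assumptions~\ref{a5.1} and~\ref{a5.2}) already guarantees that the objective $f(\cdot)$ of Section~\ref{section5} satisfies Assumptions~\ref{a1.3} and~\ref{a1.4}. Hence it remains only to exhibit a controlled Markov process $\{\xi_n\}$ and Borel functions $F,\tilde F$ realizing (\ref{5.1}), (\ref{5.3}) as an instance of (\ref{2.1}) and to verify Assumptions~\ref{a2.2}--\ref{a2.4}; Theorem~\ref{theorem2.1} then delivers the asserted $o(\gamma_n^{-p})$, $o(\gamma_n^{-\hat p})$, $o(\gamma_n^{-\hat q})$, $O(\gamma_n^{-\hat p})$ and $O(\gamma_n^{-\hat q})$ bounds verbatim.

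The driving process is $\xi_n=(x_{n-1},x_n,y_n)$: given $\xi_n$ and $\theta_n$, the conditional law of $\xi_{n+1}=(x_n,x_{n+1},y_{n+1})$ is determined, since $x_{n+1}$ follows the ($\theta$-independent) transition kernel of $\{x_n\}$ and $y_{n+1}=\beta y_n+H_{\theta_n}(x_{n+1})$, so $\{\xi_n\}$ is a Markov process controlled by $\{\theta_n\}$ in the precise sense of Section~\ref{section2}. That $y_n$ retains information about the whole parameter history $\theta_0,\dots,\theta_{n-1}$ is not an obstruction: this is exactly the situation the controlled-Markov formulation is built to handle. After a harmless one-step relabeling of the trace, (\ref{5.1}), (\ref{5.3}) becomes an instance of (\ref{2.1}) with $F(\theta,\xi)$ essentially the negative temporal-difference increment $-(c(x)+\beta G_\theta(x')-G_\theta(x))\,z$, where $\xi$ encodes the current states $x,x'$ and the eligibility trace $z$. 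Since $\mathcal{X}$ is finite and $\{x_n\}$ geometrically ergodic (Assumption~\ref{a5.1}), it has an invariant distribution $\varpi$, so $f(\theta)=\tfrac12\sum_i\varpi(i)(g(i)-G_\theta(i))^2$ and $\nabla f(\theta)=-\sum_i\varpi(i)(g(i)-G_\theta(i))H_\theta(i)$. The identity that makes everything work, and that is special to $\lambda=1$, is $E_\varpi[F(\theta,\xi)]=\nabla f(\theta)$; I would prove it by telescoping the discounted sum of temporal-difference errors, $\sum_{n\ge0}\beta^n(c(x_n)+\beta G_\theta(x_{n+1})-G_\theta(x_n))=\sum_{n\ge0}\beta^n c(x_n)-G_\theta(x_0)$, noting that the conditional mean of the right-hand side given $x_0$ is $g(x_0)-G_\theta(x_0)$, and then using time-shift invariance of the stationary chain to transfer the geometric eligibility weights onto the temporal-difference error. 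This is precisely what makes TD(1) an honest stochastic gradient scheme for $f(\cdot)$.

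Assumption~\ref{a2.2} then follows by taking the standard Poisson-equation solution $\tilde F(\theta,\xi)=\sum_{m\ge0}\big((\Pi^m F)(\theta,\xi)-\nabla f(\theta)\big)$. The key point is that the transition kernel of $\{x_n\}$ does not depend on $\theta$, so the geometric mixing supplied by Assumption~\ref{a5.1} is uniform in $\theta$; hence each summand, and likewise its $\theta$-gradient (controlled because $G_\theta(i)$ and $H_\theta(i)$ are analytic, hence locally Lipschitz in $\theta$, by Assumption~\ref{a5.2}), decays geometrically in $m$, so both the series and its term-by-term derivative converge locally uniformly. This gives $F-\nabla f=\tilde F-\Pi\tilde F$ with $\nabla f$ locally Lipschitz, together with the $\theta$-Lipschitz bound required in Assumption~\ref{a2.3}. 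The remaining bounds in Assumption~\ref{a2.3} and all of Assumption~\ref{a2.4} rest on the observation that the only unbounded coordinate of $\xi_n$ is the trace $y_n$, which on the event $\{\tau_Q\ge n\}$ stays in a bounded set depending only on $Q$ and $\|y_0\|$ (all of $\theta_0,\dots,\theta_{n-1}$ lie in $Q$, on which $H_\theta(i)$ is bounded); consequently $\varphi_{Q,s}(\xi)$ may be chosen affine in $\xi$, and $\sup_n E(\varphi_{Q,s}^2(\xi_n)I_{\{\tau_Q\ge n\}}\mid\theta_0=\theta,\xi_0=\xi)<\infty$ for every fixed $(\theta,\xi)$. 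With Assumptions~\ref{a2.1}--\ref{a2.4} and (via Theorem~\ref{theorem5.1}) Assumptions~\ref{a1.3} and~\ref{a1.4} in force, Theorem~\ref{theorem2.1} finishes the proof.

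I expect the main obstacle to be the construction and estimation of the Poisson-equation solution $\tilde F$ in the presence of the non-compact trace coordinate --- in particular securing the locally uniform $\theta$-Lipschitz bound on $\Pi\tilde F$ demanded by Assumption~\ref{a2.3} --- together with the bookkeeping behind the TD(1) identity; the rest is routine verification once one exploits that the underlying chain $\{x_n\}$ lives on a finite set and has a $\theta$-independent transition law.
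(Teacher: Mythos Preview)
Your proposal is correct and follows essentially the same route as the paper: the paper also casts (\ref{5.1})--(\ref{5.3}) into the form (\ref{2.1}) with $\xi_{n+1}=(x_n,x_{n+1},y_n)$, verifies Assumptions~\ref{a2.2}--\ref{a2.4} via a lemma (Lemma~\ref{lemma5.1}) that supplies exactly the geometric decay bounds on $(\Pi^n F)(\theta,\xi)-\nabla f(\theta)$ and its $\theta$-Lipschitz variation together with the second-moment bound on the trace, and then invokes Theorem~\ref{theorem2.1}. Your Poisson-equation construction of $\tilde F$ and the trace-boundedness argument on $\{\tau_Q\ge n\}$ are precisely what underlies that lemma.
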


The proofs are provided in Section \ref{section5*}. 
$C, S, p, \hat{p}, \hat{q}$ and $\hat{r}$
are defined in Section \ref{section1}. 

Assumption \ref{a5.1} corresponds to the stability of Markov chain 
$\{x_{n} \}_{n\geq 0}$. 
In this or similar form, it is involved in any result on the 
asymptotic behavior of temporal-difference learning. 
On the other side, 
Assumption \ref{a5.2} is related to the properties of  
$G_{\theta }(\cdot )$. 
It covers some of the most popular function approximations 
used in the area of reinforcement learning 
(e.g., polynomial approximations and 
feedforward neural networks with analytic activation functions; 
for details see \cite{bertsekas&tsitsiklis1}, \cite{powell}, \cite{sutton&barto}). 

Asymptotic properties of temporal-difference learning 
have been the subject of a number of papers
(see \cite{bertsekas&tsitsiklis1}, \cite{powell} and references cited therein). 
However, the available literature on reinforcement learning 
does not offer any information 
on the rate of convergence of the algorithm 
(\ref{5.1}), (\ref{5.3}) in the case when 
$G_{\theta }(\cdot )$ is nonlinear in $\theta$. 
Similarly as in the case of supervised learning, 
the main difficulty is caused by 
the fact that the existing results on the convergence rate of 
stochastic gradient search require 
$f(\cdot )$ to have an isolated minimum at which 
$\nabla^{2} f(\cdot )$ is strictly positive definite. 
Unless $G_{\theta }(\cdot )$ is linear in $\theta$, 
$f(\cdot )$ is so complex that these requirements are practically impossible 
to show. 
On the other side, Theorem \ref{theorem5.2} does not impose any restriction on 
the topological properties of the minima of $f(\cdot )$, or on the values of 
$\nabla^{2} f(\cdot )$. 
Moreover, it can be applied to many temporal-difference learning algorithms 
met in practice. 

Regarding the results of this section, the following note is also in order. 
Using the arguments Theorems \ref{theorem4.1} and \ref{theorem5.2}    
are based on, 
it is possible (at the cost of increasing significantly the amount of 
technical details) 
to generalize Theorems \ref{theorem5.1} and \ref{theorem5.2}
to the case when $\{x_{n} \}_{n\geq 0}$ is a continuous state Markov chain, 
as well as 
to actor-critic learning algorithms proposed in \cite{konda&tsitsiklis}. 

\section{Example 2: Identification of Linear Stochastic Dynamical Systems} \label{section6}

In this section, the general results presented in Sections \ref{section1} and \ref{section2} 
are applied to the asymptotic analysis of recursive prediction error 
algorithms for identification of linear stochastic dynamical systems. 
To avoid unnecessary technical details and complicated notation, 
only the identification of one dimensional ARMA models is considered here. 
However, it is straightforward to generalize the obtained results to 
any linear stochastic dynamical system. 

In order to state the problem of recursive prediction error identification in ARMA models, 
we use the following notation. 
$M$ and $N$ are positive integers, 
while $d_{\theta } = M+N$. 
For
$a_{1}, \dots, a_{M} \in \mathbb{R}$ and $b_{1}, \dots, b_{N} \in \mathbb{R}$, 
let
\begin{align*}
	A_{\theta }(z) 
	=
	1 
	-
	\sum_{k=1}^{M} a_{k} z^{-k}, 
	\;\;\;\;\; 
	B_{\theta }(z) 
	=
	1 
	+
	\sum_{k=1}^{N} b_{k} z^{-k},  
\end{align*}
where  
$\theta = [a_{1} \cdots a_{M} \; b_{1} \cdots b_{N} ]^{T}$
and $z\in \mathbb{C}$ ($\mathbb{C}$ denotes the set of complex numbers). 
Moreover, let 
\begin{align*}
	\Theta
	=
	\{\theta \in \mathbb{R}^{d_{\theta} }: B_{\theta}(z) = 0 \Rightarrow |z| > 1 \}.  
\end{align*}
On the other side, 
$\{y_{n} \}_{n\geq 0}$ is a real-valued signal generated by the actual system
(i.e., by the system being identified). 
For $\theta \in \Theta$, 
$\{y_{n}^{\theta} \}_{n\geq 0}$ is the output of the ARMA model 
\begin{align} \label{6.1'}
	A_{\theta }(q) y_{n}^{\theta } 
	=
	B_{\theta }(q) e_{n}, 
	\;\;\; n \geq 0, 
\end{align}
where 
$\{e_{n} \}_{\geq 0}$ is a real-valued white noise and 
$q^{-1}$ is the backward time-shift operator.  
$\{\varepsilon_{n}^{\theta } \}_{n\geq 0}$ is the process 
generated by the recursion 
\begin{align} \label{6.1''}
	B_{\theta }(q) \varepsilon_{n}^{\theta } 
	=
	A_{\theta }(q) y_{n}, 
	\;\;\; n\geq 0, 
\end{align}
while $\hat{y}_{n}^{\theta } = y_{n} - \varepsilon_{n}^{\theta }$
and 
\begin{align*}
	f(\theta )
	=
	\frac{1}{2} 
	\lim_{n\rightarrow \infty } 
	E\left((\varepsilon_{n}^{\theta } )^{2} \right). 
\end{align*}
Then, 
$\hat{y}_{n}^{\theta }$ is 
a mean-square optimal estimate  
of $y_{n}$ given 
$y_{0},\dots,y_{n-1}$
(which the model (\ref{6.1'}) can provide; 
see e.g., \cite{ljung2}, \cite{ljung3}). 
Consequently, $\varepsilon_{n}^{\theta }$ can be interpreted 
as the estimation error. 

The parametric identification in ARMA models can be defined as the following 
estimation problem: 
Given a realization of $\{y_{n} \}_{n\geq 0}$, 
estimate the values of $\theta$ for which the model (\ref{6.1'})
provides the best approximation to the signal 
$\{y_{n} \}_{n\geq 0}$. 
If the identification is based on the prediction error principle, 
the estimation problem reduces to the minimization of 
$f(\cdot )$ over $\Theta$. 
As the asymptotic value of the second moment of $\varepsilon_{n}^{\theta }$
is rarely available analytically, 
$f(\cdot )$ is minimized by a stochastic gradient (or stochastic Newton) 
algorithm. 
Such an algorithm is defined by the following difference equations: 
\begin{align}
	& \label{6.1}
	\phi_{n}
	=
	[y_{n} \cdots y_{n-M+1 } \; \varepsilon_{n} \cdots \varepsilon_{n-N+1 } ]^{T}, 
	\\
	& \label{6.3}
	\varepsilon_{n+1} 
	=
	y_{n+1} - \phi_{n}^{T} \theta_{n}, 
	\\
	& \label{6.5}
	\psi_{n+1} 
	=
	\phi_{n}
	-
	[\psi_{n} \cdots \psi_{n-N+1 } ]^{T} 
	A_{0} \theta_{n}, 
	\\
	& \label{6.7}
	\theta_{n+1} 
	=
	\theta_{n} 
	+
	\alpha_{n} \psi_{n+1} \varepsilon_{n+1}, 
	\;\;\;\;\; n\geq 0.  
\end{align}
In this recursion, 
$\{\alpha_{n} \}_{n\geq 0}$ denotes a sequence of positive reals, 
while $A_{0}$ is a composite matrix 
defined as $A_{0} = [0_{N \times M } \; I_{N \times N } ]$. 
$\{y_{n} \}_{n\geq -M }$ is a real-valued stochastic process 
defined on a probability space
$(\Omega, {\cal F}, P)$, while 
$\theta_{0} \in \Theta$, 
$\varepsilon_{0},\dots,\varepsilon_{1-N} \in \mathbb{R}$ 
and 
$\psi_{0},\dots,\psi_{1-N} \in \mathbb{R}^{d_{\theta } }$ 
are random variables defined on the same probability space. 
$\theta_{0}, \varepsilon_{0},\dots,\varepsilon_{1-N}, 
\psi_{0},\dots,\psi_{1-N} \in \mathbb{R}^{d_{\theta } }$ 
represent the initial conditions of the algorithm
(\ref{6.1}) -- (\ref{6.7}). 

In the literature on system identification, 
recursion (\ref{6.1}) -- (\ref{6.7}) is known as 
the recursive prediction error algorithm for ARMA models
(for more details 
\cite{ljung2}, \cite{ljung3} 
and references cited therein). 
It usually involves a projection (or truncation) device which ensures
that estimates 
$\{\theta_{n} \}_{n\geq 0}$ remain in $\Theta$. 
However, in order to avoid unnecessary technical details and to 
keep the exposition as concise as possible, 
this aspect of algorithm (\ref{6.1}) -- (\ref{6.7}) is not discussed here. 
Instead, similarly as in \cite{ljung1} -- \cite{ljung3}, 
we state our asymptotic results 
(Theorem \ref{theorem6.2})
in a local form. 

Algorithm (\ref{6.1}) -- (\ref{6.7}) is analyzed under the following 
assumptions: 

\begin{assumption} \label{a6.1} 
There exist a positive integer $L$, 
a matrix $A \in \mathbb{R}^{L \times L}$, 
a vector $b\in \mathbb{R}^{L}$
and $\mathbb{R}^{L}$-valued stochastic processes 
$\{x_{n} \}_{n> -M}$, $\{w_{n} \}_{n> -M}$
(defined on $(\Omega, {\cal F}, P )$) 
such that 
the following holds: 
\begin{romannum}
\item
$x_{n+1} = A x_{n} + w_{n}$
and 
$y_{n} = b^{T} x_{n}$ 
for $n> -M$. 
\item
The eigenvalues of $A$ lie in 
$\{z\in \mathbb{C}: |z| < 1 \}$. 
\item
$\{w_{n} \}_{n\geq -M}$ are i.i.d. and 
independent of 
$\theta_{0}$, $x_{1-M}$, 
$\varepsilon_{0}, \dots, \varepsilon_{1-N}$, 
$\psi_{0}, \dots, \psi_{1-N}$. 
\item
$E\|w_{0} \|^{4} < \infty$. 
\end{romannum}
\end{assumption}

\begin{assumption} \label{a6.2}
For any compact set $Q \subset \Theta$, 
\begin{align} \label{6.9}
	\sup_{n\geq 0} 
	E\left(
	(\varepsilon_{n}^{4} + \|\psi_{n} \|^{4} )
	I_{ \{\tau_{Q} \geq n \} }
	\right)
	< \infty,  
\end{align}
where $\tau_{Q} = \inf\{n\geq 0: \theta_{n} \notin Q \}$. 
\end{assumption}

Our main result on the analyticity of $f(\cdot )$
is contained in the next theorem. 

\begin{theorem} \label{theorem6.1}
Suppose that $\{y_{n} \}_{n\geq 0}$
is a weakly stationary process such that 
\begin{align*}
	\sum_{n=0}^{\infty } 
	|{\rm Cov}(y_{0}, y_{n} ) |
	< \infty. 
\end{align*}
Then, $f(\cdot )$ is analytic on entire $\Theta$, 
i.e., 
the following is true: 
For any compact set $Q \subset \Theta$ and 
any $a\in f(Q)$, 
there exist real numbers 
$\delta_{Q,a}$,   
$\mu_{Q,a} \in (1,2]$, $\nu_{Q} \in (0,1]$, $M_{Q,a} \in [1,\infty )$, $N_{Q}$
such that (\ref{a1.4.1}) holds for all $\theta \in Q$
and such that (\ref{a1.3.1}) is satisfied for 
each $\theta \in Q$
fulfilling $|f(\theta) - a | \leq \delta_{Q,a}$.  
\end{theorem}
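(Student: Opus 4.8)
The plan is to reduce the analyticity of $f(\cdot)$ on $\Theta$ to the analyticity of a suitable integral representation, and then invoke the fact (cited in the excerpt via \cite{bierstone&milman}, \cite{lojasiewicz2}, \cite{kurdyka}) that real-analytic functions satisfy the Lojasiewicz inequalities of Assumptions \ref{a1.3} and \ref{a1.4}. So the real content is to show that $\theta \mapsto f(\theta) = \frac{1}{2}\lim_{n\to\infty} E((\varepsilon_n^\theta)^2)$ extends to a holomorphic function of several complex variables in a neighborhood of each point of $\Theta$.

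First I would obtain a closed-form spectral expression for $f(\theta)$. Since $\{y_n\}$ is weakly stationary with summable autocovariances, it has a bounded continuous spectral density $\Phi_y(\omega) = \sum_{k} \mathrm{Cov}(y_0,y_k) e^{-ik\omega}$ on $[-\pi,\pi]$. The recursion (\ref{6.1''}) defines $\varepsilon_n^\theta$ as the output of the stable filter $A_\theta(q)/B_\theta(q)$ applied to $\{y_n\}$ — stable precisely because $\theta\in\Theta$ means the zeros of $B_\theta$ lie outside the closed unit disk — so in the limit the process $\{\varepsilon_n^\theta\}$ is weakly stationary with spectral density $|A_\theta(e^{i\omega})|^2/|B_\theta(e^{i\omega})|^2 \cdot \Phi_y(\omega)$, and hence
\begin{align*}
	f(\theta) = \frac{1}{4\pi} \int_{-\pi}^{\pi} \frac{|A_\theta(e^{i\omega})|^2}{|B_\theta(e^{i\omega})|^2} \Phi_y(\omega)\, d\omega.
\end{align*}
I would verify this limit carefully: the transient caused by the initial conditions of (\ref{6.1''}) decays geometrically (eigenvalues of the companion matrix of $B_\theta$ strictly inside the unit disk, uniformly on compacts in $\Theta$), so $E((\varepsilon_n^\theta)^2)$ converges to the stationary value uniformly for $\theta$ in compact subsets of $\Theta$.

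Next I would complexify. Write $A_\theta(e^{i\omega}) = 1 - \sum_{k=1}^M a_k e^{-ik\omega}$ and similarly for $B_\theta$; the numerator $|A_\theta(e^{i\omega})|^2 = A_\theta(e^{i\omega}) A_\theta(e^{-i\omega})$ is a polynomial in the real parameters $a_1,\dots,a_M$ and in $\cos\omega,\sin\omega$, hence trivially entire in $(a_1,\dots,a_M)\in\mathbb{C}^M$. The delicate factor is $1/|B_\theta(e^{i\omega})|^2 = 1/(B_\theta(e^{i\omega})B_\theta(e^{-i\omega}))$: I need this to remain holomorphic in $(b_1,\dots,b_N)$ as these parameters move into a complex neighborhood of a given $\theta^\ast\in\Theta$. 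Fix $\theta^\ast$; since its $B$-polynomial has all zeros strictly outside the unit circle, by continuity of roots there is $\eta>0$ and a neighborhood $W$ of $(b_1^\ast,\dots,b_N^\ast)$ in $\mathbb{C}^N$ such that for every $b\in W$ the polynomial $B_b(z)$ has no zeros in $\{|z| \le 1+\eta\}$; consequently $B_b(e^{i\omega})$ and $B_b(e^{-i\omega})$ stay bounded away from $0$ uniformly over $\omega\in[-\pi,\pi]$ and $b\in W$. Therefore the integrand is a holomorphic function of $(a,b)$ in a neighborhood of $\theta^\ast$, jointly continuous in $(\omega, a, b)$, and uniformly bounded there (using boundedness of $\Phi_y$). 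By Morera's theorem plus Fubini — applied coordinatewise, or equivalently by differentiating under the integral sign — $f$ is holomorphic on that neighborhood; since $f$ is real-valued on $\Theta$, this is exactly real-analyticity of $f$ on $\Theta$.

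Finally, having established that $f$ is real-analytic on the open set $\Theta$, I invoke the results quoted after Assumption \ref{a1.4}: a real-analytic function satisfies the Lojasiewicz gradient inequality (\ref{a1.3.1}) near any point — and, by the remark following Assumption \ref{a1.4}, the "local version" on a neighborhood of $S\cap\Theta$ suffices, which upgrades to the stated form of Assumptions \ref{a1.3} and \ref{a1.4} on compact subsets via the compactness/covering argument in the appendix. The main obstacle is the justification of the spectral formula and the holomorphic extension: one must handle (i) the convergence $E((\varepsilon_n^\theta)^2)\to$ stationary value with enough uniformity, relying on geometric decay of the filter governed by the roots of $B_\theta$, and (ii) the root-location argument guaranteeing $B_b(e^{\pm i\omega})\ne 0$ for complex $b$ near $\theta^\ast$, so that $1/|B_\theta(e^{i\omega})|^2$ genuinely extends holomorphically. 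Everything after that — differentiation under the integral, passing from analyticity to the Lojasiewicz inequalities — is standard and cited.
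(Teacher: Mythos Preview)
Your approach is correct in spirit and reaches the same conclusion as the paper, but there is one omission and a genuinely different route at the analyticity step.

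The omission: your spectral formula drops the mean contribution. Since $f(\theta)=\tfrac12\lim_n E((\varepsilon_n^\theta)^2)$ is half a second moment, and $\varepsilon_n^\theta=C_\theta(q)y_n$ with $C_\theta=A_\theta/B_\theta$ has limiting mean $C_\theta(1)m$ where $m=E(y_0)$, the paper's formula reads
\[
f(\theta)=\frac{1}{4\pi}\int_{-\pi}^{\pi}|C_\theta(e^{i\omega})|^2\varphi(\omega)\,d\omega+\frac{m^2}{2}|C_\theta(1)|^2.
\]
The extra term is rational in $\theta$ on $\Theta$ (since $B_\theta(1)\neq0$ there), so it is trivially analytic and your argument goes through once you add it.

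The methodological difference: the paper does not complexify. It shows analyticity by brute force, computing all mixed partials $D_\theta^{k_1,\dots,k_{d_\theta}}|C_\theta(e^{i\omega})|^2$ explicitly, bounding them by $(k_1+\cdots+k_{d_\theta})!\,(2\alpha_\theta/\beta_\theta)^{k_1+\cdots+k_{d_\theta}+2}$ with $\alpha_\theta=1+\max_\omega|A_\theta(e^{i\omega})|$ and $\beta_\theta=\min_\omega|B_\theta(e^{i\omega})|$, and then summing the Taylor series on a polydisk of radius $\delta_\theta=\beta_\theta/(4d_\theta\alpha_\theta)$ via the multinomial formula. Your complexification-plus-Morera argument (rewriting $|B_\theta(e^{i\omega})|^2=B_\theta(e^{i\omega})B_\theta(e^{-i\omega})$ and using continuity of roots to keep both factors nonvanishing for complex $b$ near $b^\ast$) is shorter and more conceptual; the paper's route, on the other hand, produces an explicit radius of convergence in terms of $\alpha_\theta,\beta_\theta$. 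Both then conclude by invoking the Lojasiewicz inequalities for real-analytic functions, exactly as you describe.
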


In order to state our main result of the convergence rate of 
algorithm (\ref{6.1}) -- (\ref{6.7}), 
we use the following notation. 
$\Lambda$ is the event defined by
\begin{align*}
	\Lambda 
	= 
	\left\{
	\sup_{n\geq 0} \|\theta_{n} \| < \infty, 
	\inf_{n\geq 0} d(\theta_{n}, \partial \Theta ) > 0
	\right\}. 
\end{align*}
$\hat{A}$ is the set of accumulation points of 
$\{\theta_{n} \}_{n\geq 0}$, 
while 
\begin{align*}
	\hat{\rho}
	=
	2^{-1} d(\hat{A},\partial \Theta ) \: I_{\Lambda }, 
	\;\;\;\;\; 
	\hat{f}
	=
	\liminf_{n\rightarrow \infty } f(\theta_{n} ).  
\end{align*}
$\hat{Q}$ is the random set defined as
\begin{align*}
	\hat{Q}
	=
	\begin{cases}
	\left\{
	\theta\in \mathbb{R}^{d_{\theta } }: d(\theta, \hat{A} ) \hat{\rho} 
	\right\}, 
	&\text{ on } \Lambda
	\\
	\hat{A}, 
	&\text{ otherwise }
	\end{cases}. 
\end{align*}
$\hat{\delta}$, $\hat{\mu}$, $\hat{\nu}$ are random quantities 
defined by (\ref{1.21}) on $\Lambda$
and by (\ref{1.23}) otherwise. 
Random quantities $\hat{p}$, $\hat{q}$, $\hat{r}$
are defined by (\ref{1.25}). 
With this notation, 
our main result on the convergence rate of 
algorithm (\ref{6.1}) -- (\ref{6.7}) reads as follows. 

\begin{theorem} \label{theorem6.2}
Let Assumptions \ref{a2.1}, \ref{a6.1} and \ref{a6.2} hold. 
Then, 
\begin{align*}
	\|\nabla f(\theta_{n} ) \|^{2} = o(\gamma_{n}^{-p} ), 
	\;\;\;\;\;
	d(f(\theta_{n} ), C ) = o(\gamma_{n}^{-p} )
\end{align*}	
w.p.1 on 
$\Lambda$. 
Moreover, 
the following is true: 
\begin{align*}
	\|\nabla f(\theta_{n} ) \|^{2}
	=
	o\big(\gamma_{n}^{-\hat{p} } \big),
	\;\;\; 
	d(f(\theta_{n} ), C )
	=
	o\big(\gamma_{n}^{-\hat{p} } \big),
	\;\;\; 
	d(\theta_{n}, S )
	=
	o\big(\gamma_{n}^{-\hat{q} } \big)
\end{align*}
w.p.1 
on 
$\Lambda 
\cap \{\hat{r} > r \}$, and  
\begin{align*}
	\|\nabla f(\theta_{n} ) \|^{2}
	=
	O\big(\gamma_{n}^{-\hat{p} } \big),
	\;\;\; 
	d(f(\theta_{n} ), C )
	=
	O\big(\gamma_{n}^{-\hat{p} } \big), 
	\;\;\; 
	d(\theta_{n}, S )
	=
	O\big(\gamma_{n}^{-\hat{q} } \big)
\end{align*}
w.p.1 on 
$\Lambda 
\cap \{\hat{r} \leq r \}$. 
\end{theorem}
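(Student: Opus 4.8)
The plan is to deduce Theorem~\ref{theorem6.2} from Theorem~\ref{theorem2.1} together with Theorem~\ref{theorem6.1}, by recognising that, on the event $\Lambda$, the recursion (\ref{6.1})--(\ref{6.7}) is a stochastic gradient algorithm with Markovian dynamics of the form (\ref{2.1}) whose iterates are eventually confined to a compact subset of $\Theta$.

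\textbf{Reformulation as (\ref{2.1}).}
First I would introduce the augmented state $\xi_{n+1}=(x_{n+1-M},\dots,x_{n+1},\varepsilon_{n+2-N},\dots,\varepsilon_{n+1},\psi_{n+2-N},\dots,\psi_{n+1})$, which carries all the memory needed to propagate (\ref{6.1})--(\ref{6.5}) together with the state-space model $x_{n+1}=Ax_n+w_n$, $y_n=b^Tx_n$ of Assumption~\ref{a6.1}. Since these relations express $\xi_{n+1}$ as a deterministic function of $\xi_n$, $\theta_n$ and the i.i.d.\ innovation $w_n$, the process $\{\xi_n\}_{n\geq0}$ is a Markov chain controlled by $\{\theta_n\}_{n\geq0}$, and (\ref{6.7}) takes the form $\theta_{n+1}=\theta_n-\alpha_n F(\theta_n,\xi_{n+1})$ with $F(\theta,\xi)=-\psi\varepsilon$ read off from the relevant components of $\xi$.

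\textbf{Verification of Assumptions~\ref{a2.1}--\ref{a2.4} and~\ref{a1.3}--\ref{a1.4}.}
Assumption~\ref{a2.1} is assumed outright. For $\theta$ in a compact $Q\subset\Theta$ the frozen filter --- the recursion (\ref{6.1})--(\ref{6.5}) with $\theta_n$ replaced by a constant $\theta$ --- is a stable linear system driven by i.i.d.\ noise, since the roots of $B_\theta$ lie outside the closed unit disc for $\theta\in\Theta$ and the eigenvalues of $A$ lie strictly inside; hence it has a unique invariant law $\mu_\theta$ that mixes geometrically, uniformly over $Q$. Differentiating the relation $B_\theta(q)\varepsilon_n^\theta=A_\theta(q)y_n$ shows that $-\psi_n^\theta$ is the sensitivity $\nabla_\theta\varepsilon_n^\theta$ in the stationary regime, so that $\nabla f(\theta)=E_{\mu_\theta}[\bar\varepsilon^\theta\nabla_\theta\bar\varepsilon^\theta]=-E_{\mu_\theta}[\bar\psi^\theta\bar\varepsilon^\theta]=E_{\mu_\theta}[F(\theta,\cdot)]$, which identifies the mean field of (\ref{2.1}) with $\nabla f$. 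This in turn lets me solve the Poisson equation of Assumption~\ref{a2.2} by $\tilde F(\theta,\xi)=\sum_{k\geq0}\big((\Pi_\theta^k F)(\theta,\xi)-\nabla f(\theta)\big)$, the series converging geometrically; Assumptions~\ref{a2.3} and~\ref{a2.4} then follow from the fourth-moment bounds of Assumption~\ref{a6.2} on $\varepsilon_n,\psi_n$ (restricted to $\{\tau_Q\geq n\}$) and of Assumption~\ref{a6.1}(iv) on $w_0$, while the required Lipschitz-in-$\theta$ estimate on $\Pi\tilde F$ comes from the rational, hence smooth, dependence of the frozen filter and of $\mu_\theta$ on $\theta$ over $Q$. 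Finally, under Assumption~\ref{a6.1} the signal $y_n=b^Tx_n$ is weakly stationary (after absorbing a geometrically decaying transient) with summable covariances, so Theorem~\ref{theorem6.1} applies and yields analyticity of $f$ on $\Theta$; this provides the local versions of Assumptions~\ref{a1.3} and~\ref{a1.4} on every compact $Q\subset\Theta$, which is all the localized argument below requires.

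\textbf{Conclusion and the role of $\Lambda$.}
On $\Lambda$ the trajectory is eventually trapped in a compact $Q\subset\Theta$ with $\hat A\subset Q$, so the random exponents $\hat\mu,\hat\nu,\hat p,\hat q,\hat r$ attached to $\hat Q$ and $\hat f$ coincide with those of Section~\ref{section1}; carrying out the argument behind Theorem~\ref{theorem2.1} with all compact sets taken inside $\Theta$ then delivers the stated $o(\cdot)$ and $O(\cdot)$ rates, with $\{\sup_{n\geq0}\|\theta_n\|<\infty\}$ replaced by $\Lambda$. The additional requirement $\inf_{n\geq0}d(\theta_n,\partial\Theta)>0$ in the definition of $\Lambda$ is indispensable here: $f$, the invariant law $\mu_\theta$ and the geometric constants in the Poisson equation all degenerate as $\theta\to\partial\Theta$, so mere boundedness of $\{\theta_n\}_{n\geq0}$ does not suffice. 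I expect the main obstacle to be the step establishing the Poisson equation with uniformly geometric bounds on compacts of $\Theta$ and rigorously identifying $\nabla f$ with the stationary mean of $-\psi\varepsilon$, because of the coupling between the $\varepsilon$- and $\psi$-recursions and the loss of uniform stability near $\partial\Theta$.
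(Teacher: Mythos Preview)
Your proposal is correct and follows essentially the same route as the paper: recast (\ref{6.1})--(\ref{6.7}) as an instance of (\ref{2.1}) with the augmented state $\xi_n$, use the linear stable structure of the frozen system on compacts of $\Theta$ to solve the Poisson equation of Assumption~\ref{a2.2} (the paper invokes \cite[Section II.2.3]{benveniste} for this, which is exactly your geometric series $\tilde F=\sum_{k\ge0}(\Pi_\theta^kF-\nabla f)$), identify the mean field with $\nabla f$ via (\ref{6.1*})--(\ref{6.3*}), apply Theorem~\ref{theorem6.1} to get analyticity of $f$ on $\Theta$, and then conclude from Theorem~\ref{theorem2.1}. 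Your explicit discussion of the localization to compacts of $\Theta$ and the role of $\Lambda$ is in fact more careful than the paper's one-line ``directly follows from Theorem~\ref{theorem2.1}''.
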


The proofs are provided in Section \ref{section6*}. 
$C$ and $S$
are defined in Section \ref{section1}. 

Assumption \ref{a6.1} corresponds to the signal 
$\{y_{n} \}_{n\geq 0}$. 
It is quite common for the asymptotic analysis of 
recursive identification algorithm (see e.g., \cite[Part I]{benveniste}) 
and cover all stable linear Markov models. 
Assumption \ref{a6.2} is related to the stability of 
subrecursion (\ref{6.1}) -- (\ref{6.5}) 
and its output 
$\{\varepsilon_{n} \}_{\geq 0}$, $\{\psi_{n} \}_{n\geq 0}$. 
In this or a similar form, 
Assumption \ref{a6.2} is involved in most of 
the asymptotic results on the recursive prediction error 
identification algorithms. 
E.g., 
\cite[Theorems 4.1 -- 4.3]{ljung2} 
(which are probably the most general and famous results of this kind)  
require sequence 
$\{(\varepsilon_{n}, \psi_{n} ) \}_{n\geq 0}$
to visit a fixed compact set infinitely often w.p.1 
on event $\Lambda$. 
When $\{y_{n} \}_{n\geq 0}$ is generated by a stable linear 
Markov system, such a requirement is practically equivalent to (\ref{6.9}). 

Various aspects of recursive prediction error identification in 
linear stochastic dynamical systems have been the subject 
of numerous papers and books
(see \cite{ljung2}, \cite{ljung3} 
and references cited therein). 
Despite providing a deep insight into the asymptotic behavior of 
recursive prediction error identification algorithms, 
the available results do not offer information about the convergence rate
which 
can be verified for models of a moderate or high order
(e.g., $M$ and $N$ are three or above). 
The main difficulty is the same as in the case of 
supervised learning. 
The existing results on convergence rate of stochastic gradient search 
require 
$f(\cdot )$ to have an isolated minimum
which is the limit of $\{\theta_{n} \}_{n\geq 0}$
and at which $\nabla^{2} f(\cdot )$ is strictly positive definite. 
Unfortunately, $f(\cdot )$ is so complex (even for relatively 
small $M$ and $N$) that these requirements are practically 
impossible to verify. 
Apparently, Theorem \ref{theorem6.2} relies on none of them. 

Regarding Theorems \ref{theorem6.1} and \ref{theorem6.2}, 
it should be mentioned that 
these results can be generalized in several ways. 
E.g., it is straightforward to extend them to 
practically any stable multiple-input, multiple-output 
linear system. 
Moreover, it is possible to show that the results 
also hold for signals $\{y_{n} \}_{n\geq 0}$
satisfying mixing conditions of the type 
\cite[Condition S1, p. 169]{ljung2}. 

\section{Proof of Theorems \ref{theorem1.1} and \ref{theorem1.2}} \label{section1*}

In this section, the following notation is used. 
Let $\Lambda$ be the event 
\begin{align*}
	\Lambda = \left\{\sup_{n\geq 0} \|\theta_{n} \| < \infty \right\}. 
\end{align*}	
For $\varepsilon \in (0, \infty )$, let 
\begin{align*}
	\phi_{\varepsilon }(w) = \phi(w) + \varepsilon. 
\end{align*}
For 
$\theta \in \mathbb{R}^{d_{\theta } }$, let 
\begin{align*}
	u(\theta ) 
	=
	f(\theta ) - \hat{f}, 
	\;\;\;\;\;  
	v(\theta ) 
	=
	\begin{cases} 
	(f(\theta ) - \hat{f} )^{-1/\hat{p} }, 
	&\text{ if } f(\theta ) > \hat{f}
	\\
	0, 
	&\text{otherwise} 
	\end{cases}  
\end{align*}
($\hat{p}$ is introduced in Section \ref{section1}). 
On the other side, 
for $0 \leq n <k$, let 
$u_{n,n}=0$, 
$v_{n,n}=v'_{n,n}=v''_{n,n}=0$ and 
\begin{align*}	
	&
	u_{n,k}
	=
	\sum_{i=n}^{k-1} \alpha_{i} w_{i}, 
	\\	
	&
	v'_{n,k}
	=
	- (\nabla f(\theta_{n} ) )^{T} 
	\sum_{i=1}^{k-1} \alpha_{i} (\nabla f(\theta_{i} ) - \nabla f(\theta_{n} ) ), 
	\\
	&
	v''_{n,k} 
	=
	\int_{0}^{1} 
	(\nabla f(\theta_{n} + s (\theta_{k} - \theta_{n} ) ) - \nabla f(\theta_{n} ) )^{T}
	(\theta_{k} - \theta_{n} ) ds, 
	\\
	&
	v_{n,k} = v'_{n,k} + v''_{n,k}.
\end{align*}
Then, it is straightforward to show 
\begin{align} \label{1.1*}
	f(\theta_{k} ) - f(\theta_{n} )
	= &
	-
	(\gamma_{k} - \gamma_{n} ) 
	\|\nabla f(\theta_{n} ) \|^{2} 
	-
	(\nabla f(\theta_{n} ) )^{T} u_{n,k}  	
	+
	v_{n,k}			
\end{align}
for $0 \leq n \leq k$. 

Regarding the notation, the following note is also in order: 
$\:\tilde{}\:$ symbol is used for locally defined quantities, i.e., 
for a quantity whose definition holds only in the proof where such a quantity 
appears. 

\begin{lemma} \label{lemma1.1}
Let Assumptions \ref{a1.1} and \ref{a1.2} hold. 
Then, there exists an event $N_{0} \in {\cal F}$
such that 
$P(N_{0} ) = 0$ and 
\begin{align} \label{l1.1.1*}
	\limsup_{n\rightarrow \infty } \gamma_{n}^{r} 
	\max_{n\leq k \leq a(n,1) }
	\|u_{n,k} \|
	\leq 
	w
	< 
	\infty 
\end{align}
on $\Lambda\setminus N_{0}$. 
\end{lemma}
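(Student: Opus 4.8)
The plan is to establish Lemma \ref{lemma1.1} by reducing the quantity $\gamma_n^r \max_{n \le k \le a(n,1)} \|u_{n,k}\|$ to the expression controlled by Assumption \ref{a1.2}, namely $\max_{n \le k < a(n,1)} \|\sum_{i=n}^{k} \alpha_i \gamma_i^r w_i\|$. The key observation is that $u_{n,k} = \sum_{i=n}^{k-1} \alpha_i w_i$ can be recovered from the partial sums $\sigma_{n,k} = \sum_{i=n}^{k-1} \alpha_i \gamma_i^r w_i$ by a summation-by-parts (Abel summation) argument, dividing through by the slowly-varying weights $\gamma_i^r$. Writing $\alpha_i w_i = \gamma_i^{-r}(\sigma_{n,i+1} - \sigma_{n,i})$ and summing by parts, one gets
\begin{align*}
	u_{n,k}
	=
	\gamma_k^{-r} \sigma_{n,k}
	+
	\sum_{i=n}^{k-1} \big(\gamma_i^{-r} - \gamma_{i+1}^{-r}\big) \sigma_{n,i+1},
\end{align*}
so that $\|u_{n,k}\| \le \gamma_k^{-r}\|\sigma_{n,k}\| + (\gamma_n^{-r} - \gamma_k^{-r}) \max_{n \le j \le k}\|\sigma_{n,j}\| \le \gamma_n^{-r}\max_{n \le j \le k}\|\sigma_{n,j}\|$, using that $i \mapsto \gamma_i^{-r}$ is decreasing and telescoping the coefficients. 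Multiplying by $\gamma_n^r$ and taking the maximum over $n \le k \le a(n,1)$ then yields $\gamma_n^r \max_k \|u_{n,k}\| \le \max_{n \le j \le a(n,1)} \|\sigma_{n,j}\|$, and taking $\limsup$ together with Assumption \ref{a1.2} gives the bound $w < \infty$ on $\Lambda$.

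First I would make the null set explicit: by Assumption \ref{a1.2} there is an event of the form $\{w = \infty\} \cap \Lambda$ which is $P$-null; calling its intersection with a standard-form measurable set $N_0$, one has $P(N_0) = 0$ and on $\Lambda \setminus N_0$ the quantity $w$ is finite, so the chain of inequalities above is valid there. Then I would carry out the summation-by-parts carefully, being attentive to the index bookkeeping: Assumption \ref{a1.2} uses $\max_{n \le k < a(n,1)}$ while the lemma's conclusion involves $\max_{n \le k \le a(n,1)}$, but since $u_{n,a(n,1)} = u_{n,a(n,1)-1} + \alpha_{a(n,1)-1} w_{a(n,1)-1}$ is itself expressible through $\sigma$ up to index $a(n,1)$, one can absorb the endpoint either by noting $\sigma_{n,a(n,1)}$ appears in the Abel formula or by a one-term adjustment that is negligible since $\alpha_n \to 0$ (Assumption \ref{a1.1}) and $\gamma_n \to \infty$.

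The main obstacle I expect is verifying that the weights behave well enough for the telescoping estimate $\sum_{i=n}^{k-1}(\gamma_i^{-r} - \gamma_{i+1}^{-r}) \le \gamma_n^{-r} - \gamma_k^{-r}$ to hold with the right sign and to confirm that $\gamma_i > 0$ for $i \ge 1$ so the powers $\gamma_i^{-r}$ make sense (the $i = 0$ term has $\alpha_0 \gamma_0^r w_0 = 0$ since $\gamma_0 = 0$ and $r > 0$, so that term is harmless and one can start the sums effectively at $i = 1$, or at the first index where $\gamma_i > 0$). Monotonicity of $i \mapsto \gamma_i$ is immediate from $\gamma_{i+1} - \gamma_i = \alpha_i > 0$, so $i \mapsto \gamma_i^{-r}$ is indeed decreasing for $i \ge 1$; the only care needed is at small indices, which contribute a fixed finite amount that is dominated once $\limsup$ is taken. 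Beyond that the argument is routine, and I would close by remarking that the same $N_0$ can serve for later lemmas in the section that rely on the conclusion of Lemma \ref{lemma1.1}.
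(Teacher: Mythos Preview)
Your proposal is correct and follows essentially the same Abel summation argument as the paper: both derive the identity $u_{n,k} = \gamma_k^{-r}\sigma_{n,k} + \sum_{i=n}^{k-1}(\gamma_i^{-r}-\gamma_{i+1}^{-r})\sigma_{n,i+1}$ (the paper writes $\sigma_{n,i+1}$ as $\sum_{j=n}^{i}\alpha_j\gamma_j^r w_j$), telescope the positive coefficients to get $\|u_{n,k}\|\le \gamma_n^{-r}\max_j\|\sigma_{n,j}\|$, and invoke Assumption~\ref{a1.2}. Your worry about the endpoint $k=a(n,1)$ is a non-issue, since $\sigma_{n,a(n,1)}=\sum_{i=n}^{a(n,1)-1}\alpha_i\gamma_i^r w_i$ is already covered by the maximum over $n\le j<a(n,1)$ in Assumption~\ref{a1.2}.
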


\begin{proof}
It is straightforward to verify 
\begin{align*}
	u_{n,k}
	=
	\sum_{i=n}^{k-1} 
	(\gamma_{i}^{-r} - \gamma_{i+1}^{-r} ) 
	\left(
	\sum_{j=n}^{i} \alpha_{j} \gamma_{j}^{r} w_{j} 
	\right)
	+
	\gamma_{k}^{-r} \sum_{i=n}^{k-1} \alpha_{i} \gamma_{i}^{r} w_{i} 
\end{align*}
for $0\leq n < k$. 
Consequently, 
\begin{align*}
	\|u_{n,k} \|
	\leq &
	\left(
	\gamma_{k}^{-r} 
	+ 
	\sum_{i=n}^{k-1} (\gamma_{i}^{-r} - \gamma_{i+1}^{-r} )
	\right)
	\max_{n\leq j < k } 
	\left\|
	\sum_{i=n}^{j} \alpha_{i} \gamma_{i}^{r} w_{i} 
	\right\|
	=
	\gamma_{n}^{-r} 
	\max_{n\leq j < k } 
	\left\|
	\sum_{i=n}^{j} \alpha_{i} \gamma_{i}^{r} w_{i} 
	\right\|
\end{align*}
for $0 \leq n < k$. 
Thus, 
\begin{align*}
	\gamma_{n}^{r} 
	\|u_{n,k} \|
	\leq &
	\max_{n\leq j < a(n,1) } 
	\left\|
	\sum_{i=n}^{j} \alpha_{i} \gamma_{i}^{r} w_{i} 
	\right\|
\end{align*}
for $0 \leq n \leq k \leq a(n,1)$. 
Then, the lemma's assertion directly follows from Assumption \ref{a1.2}. 
\end{proof} 

\begin{lemma} \label{lemma1.2}
Suppose that Assumptions \ref{a1.1} -- \ref{a1.3} hold. 
Moreover, let $\varepsilon \in (0,\infty )$
be an arbitrary positive real number. 
Then, 
there exist random quantities  
$\hat{C}_{1}$, $\hat{t}$
(which are deterministic functions of 
$\hat{C}$; $\hat{C}$ is defined in 
Section \ref{section1}) 
and a non-negative integer-valued 
random variable 
$\sigma_{\varepsilon}$ such that 
$1\leq \hat{C} < \infty$, 
$0 < \hat{t} \leq 1$, 
$0\leq \sigma_{\varepsilon } < \infty$
everywhere and 
such that 
\begin{align}
	& \label{l1.2.1*} 	
	\begin{aligned}[b]
		\max_{n\leq k \leq a(n,\hat{t} ) }
		(f(\theta_{k} ) - f(\theta_{n} ) )
		\leq &	
		\gamma_{n}^{-\hat{p}/\hat{\mu} } 
		\|\nabla f(\theta_{n} ) \| \phi_{\varepsilon }(w) 	
		+ 
		\hat{C}_{1} 
		\gamma_{n}^{-2\hat{p}/\hat{\mu} } 
		(\phi_{\varepsilon }(w) )^{2}, 
	\end{aligned}
	\\
	& \label{l1.2.3*}
	\begin{aligned}[b]
		f(\theta_{a(n,\hat{t} ) } ) - f(\theta_{n} ) 
		\leq &
		-
		\hat{t} \|\nabla f(\theta_{n} ) \|^{2}/2
		+
		\gamma_{n}^{-\hat{p}/\hat{\mu} } 
		\|\nabla f(\theta_{n} ) \| \phi_{\varepsilon }(w) 
		+ 
		\hat{C}_{1} 
		\gamma_{n}^{-2\hat{p}/\hat{\mu} } 
		(\phi_{\varepsilon }(w) )^{2} 
	\end{aligned} 	
\end{align}
on $\Lambda\setminus N_{0}$ 
for  
$n>\sigma_{\varepsilon }$
($\hat{\mu}$ is introduced in Section \ref{section2}). 
\end{lemma}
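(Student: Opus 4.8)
The plan is to establish \eqref{l1.2.1*} and \eqref{l1.2.3*} on the event $\Lambda\setminus N_0$ by substituting suitable estimates into the exact identity \eqref{1.1*}; off that event one may take $\sigma_\varepsilon=0$, the claims being vacuous there. On $\Lambda\setminus N_0$ (enlarged, if necessary, by a further $P$-null set) both Lemma \ref{lemma1.1} and Theorem \ref{theorem1.1} apply, so $\nabla f(\theta_n)\to 0$, $f(\theta_n)\to\hat f$, and the noise averages $U_n:=\max_{n\le k\le a(n,1)}\|u_{n,k}\|$ satisfy $\gamma_n^{r}U_n\le w+o(1)$. Since $\{\theta_n\}$ is bounded with $d(\theta_n,\hat A)\to 0$, and since $\sup_{m\ge n}\|\nabla f(\theta_m)\|\to 0$ and $U_n\to 0$, for all $n$ past a finite random threshold the whole segment $\theta_n,\dots,\theta_{a(n,\hat t)}$ lies in the compact set $\hat Q$, on which both $\|\nabla f(\cdot)\|$ and the Lipschitz constant of $\nabla f(\cdot)$ are dominated by $\hat C$. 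The first step is then a discrete Gronwall estimate: combining $\theta_k-\theta_n=-\sum_{i=n}^{k-1}\alpha_i\nabla f(\theta_i)-u_{n,k}$ with $\|\nabla f(\theta_i)-\nabla f(\theta_n)\|\le\hat C\|\theta_i-\theta_n\|$ yields $\|\theta_k-\theta_n\|\le e^{\hat C(\gamma_k-\gamma_n)}\big((\gamma_k-\gamma_n)\|\nabla f(\theta_n)\|+U_n\big)$ for $n\le k\le a(n,\hat t)$.

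Next I would bound the three terms of \eqref{1.1*}. The noise term satisfies $-(\nabla f(\theta_n))^{T}u_{n,k}\le\|\nabla f(\theta_n)\|U_n$; since $\hat p/\hat\mu=\min\{r,\hat r\}\le r$, and since when $r>\hat r$ the extra factor $\gamma_n^{-(r-\hat r)}\to 0$ makes up the difference, one has $U_n\le\gamma_n^{-\hat p/\hat\mu}\phi_\varepsilon(w)$ for $n$ large, hence $-(\nabla f(\theta_n))^{T}u_{n,k}\le\gamma_n^{-\hat p/\hat\mu}\|\nabla f(\theta_n)\|\phi_\varepsilon(w)$; this is the term --- with coefficient exactly one --- producing the first summand on the right of \eqref{l1.2.1*} and \eqref{l1.2.3*}. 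For $v_{n,k}=v'_{n,k}+v''_{n,k}$, inserting the Gronwall estimate and the Lipschitz bound on $\nabla f(\cdot)$ shows $|v_{n,k}|$ is at most a fixed multiple of $\hat C e^{2\hat C\hat t}\big((\gamma_k-\gamma_n)^2\|\nabla f(\theta_n)\|^2+(\gamma_k-\gamma_n)\|\nabla f(\theta_n)\|U_n+U_n^2\big)$. The pure $U_n^2$ term is then $\le\hat C_1\gamma_n^{-2\hat p/\hat\mu}(\phi_\varepsilon(w))^2$ with $\hat C_1$ a function of $\hat C$, and supplies the second summand. Choosing $\hat t$ a small enough deterministic multiple of $1/\hat C$ makes the quadratic term $\le\tfrac18(\gamma_k-\gamma_n)\|\nabla f(\theta_n)\|^2$, and splitting the cross term by AM--GM against $(\gamma_k-\gamma_n)\|\nabla f(\theta_n)\|^2$ contributes another $\tfrac18(\gamma_k-\gamma_n)\|\nabla f(\theta_n)\|^2$ plus a further $O(\hat C)U_n^2$ term. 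Hence all positive $\|\nabla f(\theta_n)\|^2$ contributions are dominated by the leading term $-(\gamma_k-\gamma_n)\|\nabla f(\theta_n)\|^2$ of \eqref{1.1*}, with $-\tfrac34(\gamma_k-\gamma_n)\|\nabla f(\theta_n)\|^2$ to spare; discarding this nonpositive remainder gives \eqref{l1.2.1*}. Taking $k=a(n,\hat t)$ and using $\gamma_{a(n,\hat t)}-\gamma_n\ge\hat t-\alpha_{a(n,\hat t)}\ge\tfrac23\hat t$ for $n$ large (valid since $\alpha_n\to 0$, Assumption \ref{a1.1}) converts the spare remainder into $-\hat t\|\nabla f(\theta_n)\|^2/2$ and gives \eqref{l1.2.3*}. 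Finally $\sigma_\varepsilon$ is the maximum of the finitely many finite random thresholds invoked above; it depends on $\varepsilon$ only through the estimate $U_n\le\gamma_n^{-\hat p/\hat\mu}\phi_\varepsilon(w)$.

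The step I expect to be the main obstacle is forcing the coefficient of $\gamma_n^{-\hat p/\hat\mu}\|\nabla f(\theta_n)\|\phi_\varepsilon(w)$ to equal exactly one. Bounding $v'_{n,k}$ naively leaves, besides the quadratic terms, a cross term $c(\hat C)\,\|\nabla f(\theta_n)\|U_n$ with $c(\hat C)>0$, and this can be absorbed neither into $\gamma_n^{-\hat p/\hat\mu}\|\nabla f(\theta_n)\|\phi_\varepsilon(w)$ (since $w$ may be arbitrarily large, so $\phi_\varepsilon(w)=\phi(w)+\varepsilon$ offers no slack) nor into $\hat C_1\gamma_n^{-2\hat p/\hat\mu}(\phi_\varepsilon(w))^2$ (since no rate for $\|\nabla f(\theta_n)\|$ is available at this stage). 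The way out is to keep the coefficient of the cross term proportional to the actual window length $\gamma_k-\gamma_n$, not to $\hat t$ --- which is precisely why the untruncated exponential Gronwall bound is used --- and then to trade it against the negative quadratic term $-(\gamma_k-\gamma_n)\|\nabla f(\theta_n)\|^2$ of \eqref{1.1*}. This trade-off is what pins down the choices $\hat t=\hat t(\hat C)$ and $\hat C_1=\hat C_1(\hat C)$.
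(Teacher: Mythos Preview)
Your strategy mirrors the paper's proof closely: the exact decomposition \eqref{1.1*}, a discrete Gronwall bound on $\|\theta_k-\theta_n\|$ via the Lipschitz property of $\nabla f$ on $\hat Q$, quadratic estimates for $v'_{n,k}$ and $v''_{n,k}$, and a choice of $\hat t$ small enough (the paper takes $\hat C_1=12\hat C^3e^{2\hat C}$ and $\hat t=1/(4\hat C_1)$) so that the $(\gamma_k-\gamma_n)^2\|\nabla f(\theta_n)\|^2$ contribution is dominated by the leading $-(\gamma_k-\gamma_n)\|\nabla f(\theta_n)\|^2$ of \eqref{1.1*}. Your AM--GM treatment of the cross term is a harmless variant of the paper's (which splits it into $(\gamma_k-\gamma_n)^2\|\nabla f(\theta_n)\|^2$ and $U_n^2$ directly), and your diagnosis of the ``main obstacle'' --- that the cross term must carry the factor $(\gamma_k-\gamma_n)$ so it can be traded against the negative quadratic --- is exactly right.

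There is one circularity to remove. You invoke Theorem~\ref{theorem1.1} to obtain $\nabla f(\theta_n)\to 0$ and $f(\theta_n)\to\hat f$, but in the paper Theorem~\ref{theorem1.1} is deduced from Lemmas~\ref{lemma1.3} and~\ref{lemma1.3'}, each of which depends on the present lemma. Fortunately you do not need these conclusions: the only place you use them is to argue that the whole segment $\theta_n,\dots,\theta_{a(n,\hat t)}$ eventually lies in $\hat Q$, and this follows from boundedness alone. On $\Lambda$ one has $d(\theta_n,\hat A)\to 0$, so $\theta_n\in\hat Q$ for all $n$ past a finite threshold $\tilde\sigma_1$; every iterate with index $\ge\tilde\sigma_1$ is then automatically in $\hat Q$, with no need to control increments via vanishing gradients. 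The paper builds $\sigma_\varepsilon$ exactly this way --- as the maximum of $\tilde\sigma_1$, a threshold ensuring $\alpha_n\le\hat t/3$, and a threshold ensuring $U_n\le\gamma_n^{-\hat p/\hat\mu}\phi_\varepsilon(w)$ --- with no appeal to gradient or objective convergence.
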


\begin{proof}
Let 
$\hat{C}_{1} = 12 \hat{C}^{3} \exp(2\hat{C} )$, 
$\hat{t} = 1/(4 \hat{C}_{1} )$, 
while  
\begin{align*}
	&
	\tilde{\sigma}_{1}
	=
	\max\left(
	\{n\geq 0: \theta_{n} \not\in \hat{Q} \} 
	\cup \{0\}
	\right),
	\\
	&
	\tilde{\sigma}_{2}
	=
	\max\left(
	\left\{n\geq 0: \alpha_{n}>\hat{t}/3 \right\}
	\cup \{0\}
	\right), 
	\\
 	&
	\tilde{\sigma}_{3,\varepsilon}
	=
	\max\left(
	\left\{
	n\geq 0: 
	\max_{n\leq k \leq a(n,1) } \|u_{n,k} \| 
	> 
	\gamma_{n}^{-\hat{p}/\hat{\mu} }
	\phi_{\varepsilon}(w) 
	\right\}
	\cup 
	\{0\}
	\right)
\end{align*}
and 
$\sigma_{\varepsilon } = 
\max\{\tilde{\sigma}_{1}, \tilde{\sigma}_{2}, 
\tilde{\sigma}_{3,\varepsilon } \}
I_{\Lambda\setminus N_{0} }$.  
Then, it is obvious that 
$\sigma_{\varepsilon }$ is well-defined. 
On the other side, Lemma \ref{lemma1.1} yields
\begin{align*}
	\limsup_{n\rightarrow \infty }
	\gamma_{n}^{\hat{p}/\hat{\mu} }
	\max_{n\leq k \leq a(n,1) }
	\|u_{n,k} \|
	=
	\limsup_{n\rightarrow \infty }
	\gamma_{n}^{r}
	\max_{n\leq k \leq a(n,1) }
	\|u_{n,k} \|
	=
	w
	<
	\phi_{\varepsilon}(w)
\end{align*}
on $(\Lambda\setminus N_{0} ) \cap \{\hat{r}\geq r \}$
(notice that if $r\leq \hat{r}$, 
then $\hat{p}/\hat{\mu} = r$
and 
$\phi_{\varepsilon}(w) \geq w+\varepsilon > w$)
and
\begin{align*}
	\limsup_{n\rightarrow \infty }
	\gamma_{n}^{\hat{p}/\hat{\mu} }
	\max_{n\leq k \leq a(n,1) }
	\|u_{n,k} \|
	=
	\limsup_{n\rightarrow \infty }
	\gamma_{n}^{\hat{p}/\hat{\mu} - r}
	w
	=
	0
	<
	\phi_{\varepsilon}(w)
\end{align*}
on $(\Lambda\setminus N_{0} ) \cap \{\hat{r}< r \}$
(notice that if $r> \hat{r}$, 
then $\hat{p}/\hat{\mu} = \hat{r} < r$
and 
$\phi_{\varepsilon}(w) \geq \varepsilon > 0$). 
Thus, 
$0\leq \sigma_{\varepsilon} < \infty$
everywhere. 
Moreover, we have 
\begin{align} 
	& \label{l1.2.1}
	\max_{n\leq k \leq a(n,1) } 
	\|u_{n,k} \|
	\leq 
	\gamma_{n}^{-\hat{p}/\hat{\mu} } \phi_{\varepsilon }(w),  
	\\
	& \label{l1.2.3}
	\hat{t}
	\geq
	\gamma_{a(n,\hat{t} )} - \gamma_{n} 
	=
	\gamma_{a(n,\hat{t} )+1} - \gamma_{n} - \alpha_{a(n,\hat{t} )} 
	\geq 
	2\hat{t}/3
\end{align}
on $\Lambda \setminus N_{0}$ for $n>\sigma_{\varepsilon }$. 
On the other side, (\ref{l1.2.1}) yields 
\begin{align*}
	\|\nabla f(\theta_{k} ) \| 
	\leq &
	\|\nabla f(\theta_{n} ) \| 
	+
	\|\nabla f(\theta_{k} ) - \nabla f(\theta_{n} ) \|
	\\
	\leq &
	\|\nabla f(\theta_{n} ) \| 
	+
	\hat{C} \|\theta_{k} - \theta_{n} \|
	\\
	\leq &
	\|\nabla f(\theta_{n} ) \| 
	+
	\hat{C} 
	\sum_{i=n}^{k-1} \alpha_{i} \|\nabla f(\theta_{i} ) \| 
	+
	\hat{C} \|u_{n,k} \|
	\\
	\leq &
	\|\nabla f(\theta_{n} ) \| 
	+
	\hat{C} \gamma_{n}^{-\hat{p}/\hat{\mu} } \phi_{\varepsilon }(w) 
	+
	\hat{C} 
	\sum_{i=n}^{k-1} \alpha_{i} \|\nabla f(\theta_{i} ) \| 
\end{align*}
on $\Lambda$ for $\sigma_{\varepsilon } < n \leq k$.  
Then, Bellman-Gronwall inequality implies 
\begin{align*}
	\|\nabla f(\theta_{k} ) \|	
	\leq &
	\left(
	\|\nabla f(\theta_{n} ) \|
	+
	\hat{C} \gamma_{n}^{-\hat{p}/\hat{\mu} } \phi_{\varepsilon }(w) 
	\right)
	\exp\left(
	\hat{C} (\gamma_{a(n,1) } - \gamma_{n} )
	\right) 
	\\
	\leq & 	
	\hat{C} \exp(\hat{C} )
	\left(
	\|\nabla f(\theta_{n} ) \|
	+
	\gamma_{n}^{-\hat{p}/\hat{\mu} } \phi_{\varepsilon }(w) 
	\right)
\end{align*}
on $\Lambda\setminus N_{0}$ for $\sigma_{\varepsilon } < n \leq k \leq a(n,1)$
(notice that $\gamma_{a(n,1)} - \gamma_{n} \leq 1$). 
Consequently, (\ref{l1.2.1}) gives 
\begin{align*}
	\|\theta_{k} - \theta_{n} \|
	\leq &
	\sum_{i=n}^{k-1} 
	\alpha_{i} \|\nabla f(\theta_{i} ) \| 
	+
	\|u_{n,k} \|
	\\
	\leq &
	\hat{C} \exp(\hat{C} )
	\left(
	\|\nabla f(\theta_{n} ) \|
	+
	\gamma_{n}^{-\hat{p}/\hat{\mu} } \phi_{\varepsilon }(w) 
	\right)
	(\gamma_{k} - \gamma_{n} )
	+ 
	\gamma_{n}^{-\hat{p}/\hat{\mu} } \phi_{\varepsilon }(w) 
	\\
	\leq &
	2\hat{C} \exp(\hat{C} ) 
	\left(
	(\gamma_{k} - \gamma_{n} )	\|\nabla f(\theta_{n} ) \|
	+ 
	\gamma_{n}^{-\hat{p}/\hat{\mu} } \phi_{\varepsilon }(w) 
	\right)
\end{align*}
on $\Lambda\setminus N_{0}$ for $\sigma_{\varepsilon } < n \leq k \leq a(n,1)$.
Therefore, 
\begin{align*}	
	&
	\begin{aligned}[b]
		|v'_{n,k} |
		\leq &
		\hat{C} 
		\|\nabla f(\theta_{n} ) \|
		\sum_{i=n}^{k-1} \alpha_{i} \|\theta_{i} - \theta_{n} \|	
		\\
		\leq &
		2\hat{C}^{2} \exp(\hat{C} ) 
		\left(
		(\gamma_{k} - \gamma_{n} )^{2} \|\nabla f(\theta_{n} ) \|^{2}
		+ 
		\gamma_{n}^{-\hat{p}/\hat{\mu} } (\gamma_{k} - \gamma_{n} )
		\|\nabla f(\theta_{n} ) \| 	
		\phi_{\varepsilon }(w)
		\right)
		\\
		\leq &
		4\hat{C}^{2} \exp(\hat{C} ) 
		\left(
		(\gamma_{k} - \gamma_{n} )^{2} \|\nabla f(\theta_{n} ) \|^{2}
		+ 
		\gamma_{n}^{-2\hat{p}/\hat{\mu} } (\phi_{\varepsilon }(w) )^{2}
		\right),
	\end{aligned}
	\\	
	&
	\begin{aligned}[b]
		|v''_{n,k} |
		\leq &
		\hat{C} \|\theta_{k} - \theta_{n} \|^{2} 
		\\
		\leq &
		4\hat{C}^{3} \exp(2\hat{C} ) 
		\left(
		(\gamma_{k} - \gamma_{n} ) \|\nabla f(\theta_{n} ) \|
		+ 
		\gamma_{n}^{-\hat{p}/\hat{\mu} } \phi_{\varepsilon }(w)
		\right)^{2}
		\\
		\leq &
		8\hat{C}^{3} \exp(2\hat{C} ) 
		\left(
		(\gamma_{k} - \gamma_{n} )^{2}	\|\nabla f(\theta_{n} ) \|^{2}
		+ 
		\gamma_{n}^{-2\hat{p}/\hat{\mu} } (\phi_{\varepsilon }(w) )^{2}
		\right)
	\end{aligned}
\end{align*}
on $\Lambda\setminus N_{0}$ for $\sigma_{\varepsilon } < n \leq k \leq a(n,1)$. 
Thus, 
\begin{align} 
	&\label{l1.2.5}
	|v_{n,k} |
	\leq 
	\hat{C}_{1} 
	\left(
	(\gamma_{k} - \gamma_{n} )^{2}	\|\nabla f(\theta_{n} ) \|^{2}
	+ 
	\gamma_{n}^{-2\hat{p}/\hat{\mu} } (\phi_{\varepsilon }(w) )^{2} 
	\right)
\end{align}
on $\Lambda\setminus N_{0}$ for $\sigma_{\varepsilon } < n \leq k \leq a(n,1)$. 
Since 
\begin{align*}
	\hat{C}_{1} (\gamma_{k}-\gamma_{n} )	
	\leq 
	\hat{C}_{1} (\gamma_{a(n,\hat{t} ) }-\gamma_{n} )	
	\leq 
	\hat{C}_{1} \hat{t} \leq 1/4
\end{align*}
for $0\leq n \leq k \leq a(n,\hat{t} )$
(due to (\ref{l1.2.3})), 
(\ref{1.1*}), (\ref{l1.2.1}) and (\ref{l1.2.5}) yield 
\begin{align} \label{l1.2.7}
	f(\theta_{k} ) - f(\theta_{n} ) 
	\leq &
	-
	(\gamma_{k} - \gamma_{n} )
	\left(
	1
	-
	\hat{C}_{1} (\gamma_{k} - \gamma_{n} )
	\right)
	\|\nabla f(\theta_{n} ) \|^{2}
	\nonumber \\
	&
	+
	\gamma_{n}^{-\hat{p}/\hat{\mu} } 
	\|\nabla f(\theta_{n} ) \| \phi_{\varepsilon }(w) 		
	+ 
	\hat{C}_{1} 
	\gamma_{n}^{-2\hat{p}/\hat{\mu} } (\phi_{\varepsilon }(w) )^{2} 
	\nonumber \\
	\leq
	&
	-
	3(\gamma_{k} - \gamma_{n} )
	\|\nabla f(\theta_{n} ) \|^{2}/4 
	\nonumber \\
	&
	+
	\gamma_{n}^{-\hat{p}/\hat{\mu} } 
	\|\nabla f(\theta_{n} ) \| \phi_{\varepsilon }(w) 	
	+ 
	\hat{C}_{1} 
	\gamma_{n}^{-2\hat{p}/\hat{\mu} } (\phi_{\varepsilon }(w) )^{2} 
\end{align}
on $\Lambda\setminus N_{0}$ for 
$\sigma_{\varepsilon } < n \leq k \leq a(n,\hat{t} )$. 
As an immediate consequence of (\ref{l1.2.3}), (\ref{l1.2.7}), 
we get that 
(\ref{l1.2.1*}), (\ref{l1.2.3*})  
hold on $\Lambda\setminus N_{0}$ for $n > \sigma_{\varepsilon }$.  
\end{proof}

\begin{lemma} \label{lemma1.3}
Suppose that Assumptions \ref{a1.1} -- \ref{a1.3} hold. 
Then, 
$\lim_{n\rightarrow \infty } \nabla f(\theta_{n} ) = 0$ 
on $\Lambda \setminus N_{0}$. 
\end{lemma}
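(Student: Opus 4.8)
The plan is to run the standard ``descent along blocks'' argument, using the two one–block estimates established in Lemma \ref{lemma1.2}: the overshoot bound (\ref{l1.2.1*}) and the genuine decrease bound (\ref{l1.2.3*}). Throughout I would work on $\Lambda\setminus N_0$ and apply Lemma \ref{lemma1.2} with a fixed $\varepsilon$ (say $\varepsilon=1$), so that $\hat C_1,\hat t,\sigma_1$ are available. On $\Lambda\setminus N_0$ one has $\theta_n\in\hat Q$ for all large $n$, so compactness of $\hat Q$ gives a finite lower bound $f_\ast$ for $\{f(\theta_n)\}$ and the uniform bound $\|\nabla f(\theta_n)\|\le\hat C$; moreover $\phi_1(w)<\infty$ and $\gamma_n^{-\hat p/\hat\mu}\to0$ (recall $\hat p/\hat\mu=\min\{r,\hat r\}>0$), so the error quantity $\bar\rho_n:=\hat C\gamma_n^{-\hat p/\hat\mu}\phi_1(w)+\hat C_1\gamma_n^{-2\hat p/\hat\mu}\phi_1(w)^2$ tends to $0$ and dominates the error terms in both (\ref{l1.2.1*}) and (\ref{l1.2.3*}).

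First I would establish $\liminf_{n\to\infty}\|\nabla f(\theta_n)\|=0$. If this failed there would be $b>0$ and $m$ with $\|\nabla f(\theta_n)\|\ge 2b$ for all $n\ge m$; choosing $m>\sigma_1$ large enough that $\bar\rho_n\le\hat t b^{2}$ for $n\ge m$ and iterating (\ref{l1.2.3*}) along the block sequence $n_0=m$, $n_{l+1}=a(n_l,\hat t)$ yields $f(\theta_{n_{l+1}})\le f(\theta_{n_l})-\hat t b^{2}$, hence $f(\theta_{n_l})\to-\infty$, contradicting $f(\theta_{n_l})\ge f_\ast$.

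The second, harder step is to upgrade this to $\limsup_{n\to\infty}\|\nabla f(\theta_n)\|=0$. Suppose not, say $\limsup_n\|\nabla f(\theta_n)\|=2c>0$. Since the $\liminf$ is $0$, the sequence $\|\nabla f(\theta_n)\|$ performs infinitely many disjoint ``upcrossings'' of a fixed gap $[a,b]$, $0<a<b<2c$, where I would take $a$ small relative to $b$ (e.g.\ $a=b\,\hat\kappa^{-k}$ with $\hat\kappa=2\hat C\exp(\hat C)$ and $k$ chosen so $\hat\kappa^{k}$ is large) so that the within–block Bellman–Gronwall bound from the proof of Lemma \ref{lemma1.2}, namely $\|\nabla f(\theta_k)\|\le\hat C\exp(\hat C)\big(\|\nabla f(\theta_n)\|+\gamma_n^{-\hat p/\hat\mu}\phi_1(w)\big)$ on a block, forces each upcrossing to span many blocks. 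Over the interior blocks of an upcrossing the gradient exceeds $a$, so (\ref{l1.2.3*}) forces $f$ to drop by at least $\hat t a^{2}/4$ per such block once $n$ is large, while (\ref{l1.2.1*}) caps the $f$–increase inside any block by $\bar\rho_n$; combining these with $f(\theta_n)\ge f_\ast$ shows that each upcrossing can occupy only boundedly many blocks, hence produces a net decrease of $f$ by a fixed positive amount, and infinitely many disjoint upcrossings then contradict $f(\theta_n)\ge f_\ast$. (A softer alternative: $\|\theta_{n+1}-\theta_n\|\le\alpha_n(\hat C+\|w_n\|)\to0$ by Lemma \ref{lemma1.1} applied with $k=n+1$, so $\hat A$ is connected, and a parallel computation gives $f(\theta_{n+1})-f(\theta_n)\to0$; one can then argue directly on the connected set $\hat A$, exploiting that $f$ only increases along the trajectory through points of nearly vanishing gradient, that $\hat A$ cannot meet both $\{\nabla f=0\}$ and $\{\|\nabla f\|\ge c\}$.)

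I expect the real work to be entirely in this second step, and the obstacle is exactly that Assumption \ref{a1.2} (via Lemma \ref{lemma1.1}) controls the noise sums $u_{n,k}$ only over windows of $\gamma$–length at most $1$. Consequently one cannot bound $f(\theta_q)-f(\theta_p)$ or $\|\theta_q-\theta_p\|$ over a long stretch $[p,q]$ in one shot; the stretch must be cut into the blocks $[n_l,a(n_l,\hat t)]$ and all estimates propagated block by block, tracking simultaneously how $\|\nabla f(\theta_n)\|$ and $f(\theta_n)$ evolve (they remain comparable up to the factor $\hat C\exp(\hat C)$ and an $o(1)$ additive term coming from the Gronwall bound), and one must keep the accumulated one–block errors $\bar\rho_{n_l}$ under control along an upcrossing — which is precisely why the a priori bound on the number of blocks an upcrossing can occupy, extracted from $f(\theta_n)\ge f_\ast$, is indispensable.
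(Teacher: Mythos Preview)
Your first step (the $\liminf$) is fine. The gap is in the second step, and it is exactly the obstacle you yourself flag at the end but do not resolve: once the trajectory is chopped into blocks $n_{l+1}=a(n_l,\hat t)$, the per–block overshoot allowed by (\ref{l1.2.1*}) is $O(\gamma_{n_l}^{-\hat p/\hat\mu})$, and along the block sequence $\gamma_{n_l}\asymp l\hat t$, so the cumulative possible increase of $f$ over the ``low–gradient'' blocks is of order $\sum_l l^{-\min\{r,\hat r\}}$. This diverges whenever $r\le 1$ (Assumption~\ref{a1.2} only asks $r>0$). Hence your bookkeeping ``total drop during upcrossings versus total rise elsewhere, both compared to $f\ge f_\ast$'' does not close: infinitely many upcrossings can be perfectly compensated by infinitely many low–gradient blocks. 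The variant where you make $a$ tiny so that each upcrossing spans many blocks does not help either, since the per–block drop during an upcrossing is only $\hat t a^{2}/4$ and $a^{2}$ shrinks faster than the number of forced blocks grows. The ``softer alternative'' is likewise incomplete: connectedness of $\hat A$ and $f(\theta_{n+1})-f(\theta_n)\to0$ do not by themselves rule out $\hat A$ meeting both $S$ and $\{\|\nabla f\|\ge c\}$; for that one needs either $f|_{\hat A}$ constant (which is Lemma~\ref{lemma1.3'}, proved \emph{after} this lemma) or an ODE/chain–recurrence argument you have not invoked.

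What you are missing is Assumption~\ref{a1.3}. The paper's proof never attempts a global descent–vs.–ascent balance. Instead it works with upcrossings of $f$ (not of $\|\nabla f\|$): it first shows from (\ref{l1.2.3*}) that any subsequence with $\|\nabla f(\theta_{l_k})\|$ bounded below forces $f(\theta_{l_k})$ well above $\hat f$; then, picking levels $\hat f<b<c$ with $b<\hat f+\hat\delta$ and last–exit times $m_k$ where $f$ crosses $b$ upward, it uses (\ref{l1.2.1*}) to get $f(\theta_{m_k})\to b$ and (\ref{l1.2.3*}) (together with $f(\theta_n)\ge b$ on $(m_k,n_k]$) to get $\|\nabla f(\theta_{m_k})\|\to 0$. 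The Lojasiewicz inequality (\ref{1.1'}) then gives $0<(b-\hat f)/2\le \hat M\|\nabla f(\theta_{m_k})\|^{\hat\mu}\to0$, the desired contradiction. In short: you need Assumption~\ref{a1.3} to convert ``$f$ stays above $\hat f$'' into ``$\|\nabla f\|$ stays above $0$'', which is precisely the link your purely descent–based argument lacks.
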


\begin{proof}
The lemma's assertion is proved by contradiction. 
We assume that 
$\limsup_{n\rightarrow \infty } \|\nabla f(\theta_{n} ) \| > 0$
for some sample 
$\omega \in \Lambda\setminus N_{0}$ 
(notice that all formulas which follow in the proof correspond to 
this $\omega$). 
Then, there exists $a\in (0,\infty )$ and an 
increasing sequence $\{l_{k} \}_{k\geq 0}$
such that 
$\liminf_{k\rightarrow \infty } \|\nabla f(\theta_{l_{k} } ) \| > a$. 
Since $\liminf_{k\rightarrow \infty } f(\theta_{a(l_{k},\hat{t} ) } ) \geq \hat{f}$, 
Lemma \ref{lemma1.2} (inequality (\ref{l1.2.3*})) gives 
\begin{align*}
	\hat{f} 
	-
	\liminf_{k\rightarrow \infty } f(\theta_{l_{k} } ) 
	\leq &
	\limsup_{k\rightarrow \infty } 
	(f(\theta_{a(l_{k},\hat{t} ) } ) - f(\theta_{l_{k} } ) )
	\nonumber\\
	\leq &
	-
	(\hat{t}/2 ) 
	\liminf_{k\rightarrow \infty } 
	\|\nabla f(\theta_{l_{k} } ) \|^{2}
	\nonumber\\
	\leq &
	- a^{2} \hat{t}/2.   
\end{align*}
Therefore, 
$\liminf_{k\rightarrow \infty } f(\theta_{l_{k} } ) \geq \hat{f} + a \hat{t}^{2}/2$. 
Consequently, 
there exist $b,c \in \mathbb{R}$ such that 
$\hat{f} < b < c < \hat{f} + a \hat{t}^{2}/2$, 
$b < \hat{f} + \hat{\delta}$
and $\limsup_{n\rightarrow \infty } f(\theta_{n} ) > c$. 
Thus, there exist sequences 
$\{m_{k} \}_{k\geq 0}$, $\{n_{k} \}_{k\geq 0}$
with the following properties: 
$m_{k} < n_{k} < m_{k+1}$, 
$f(\theta_{m_{k} } ) < b$, 
$f(\theta_{n_{k} } ) > c$ and 
\begin{align} \label{l1.3.5}
	\max_{m_{k} < n \leq n_{k} } f(\theta_{n} ) 
	\geq 
	b
\end{align}
for $k\geq 0$. 
Then, Lemma \ref{lemma1.2} (inequality (\ref{l1.2.1*})) implies 
\begin{align}
	&\label{l1.3.7}
	\limsup_{k\rightarrow \infty } 
	(f(\theta_{m_{k} + 1 } ) - f(\theta_{m_{k} } ) )
	\leq 
	0, 
	\\
	&\label{l1.3.9} 
	\limsup_{k\rightarrow \infty } 
	\max_{m_{k} \leq n \leq a(m_{k}, \hat{t} ) }
	(f(\theta_{n} ) - f(\theta_{m_{k} } ) )
	\leq
	0. 
\end{align}
Since 
\begin{align*}
	b
	>
	f(\theta_{m_{k} } ) 
	=
	f(\theta_{m_{k} + 1 } ) 
	-
	(f(\theta_{m_{k} + 1 } ) - f(\theta_{m_{k} } ) )
	\geq 
	b 
	-
	(f(\theta_{m_{k} + 1 } ) - f(\theta_{m_{k} } ) )
\end{align*}
for $k\geq 0$, 
(\ref{l1.3.7}) yields 
$\lim_{k\rightarrow \infty } f(\theta_{m_{k} } ) = b$. 
As
$f(\theta_{n_{k} } ) - f(\theta_{m_{k} } ) > c-b$
for $k\geq 0$, 
(\ref{l1.3.9}) implies 
$a(m_{k},\hat{t} ) < n_{k}$ for all, 
but infinitely many $k$
(otherwise, 
$\liminf_{k\rightarrow \infty } (f(\theta_{n_{k} } ) - f(\theta_{m_{k} } ) ) \leq 0$
would follow from (\ref{l1.3.9})). 
Consequently,
$\liminf_{k\rightarrow \infty } f(\theta_{a(m_{k},\hat{t} ) } ) ) \geq b$
(due to (\ref{l1.3.5})), 
while Lemma \ref{lemma1.2} 
(inequality (\ref{l1.2.3*})) gives 
\begin{align*}	
	0
	\leq
	\limsup_{k\rightarrow \infty } 
	f(\theta_{a(m_{k},\hat{t} )} ) 
	- 
	b 
	= &
	\limsup_{k\rightarrow \infty } 
	(f(\theta_{a(m_{k},\hat{t} )} ) - f(\theta_{m_{k} } ) )
	\\
	\leq &
	- 
	(\hat{t}/2)
	\liminf_{k\rightarrow \infty } 
	\|\nabla f(\theta_{m_{k} } ) \|^{2}.   
\end{align*}
Therefore, 
$\lim_{k\rightarrow \infty } \|\nabla f(\theta_{m_{k} } ) \| = 0$. 
Thus, 
there exists 
$k_{0} \geq 0$ such that 
$\theta_{m'_{k} } \in \hat{Q}$
and 
$f(\theta_{m_{k} } ) \geq (\hat{f} + b ) / 2$
for $k\geq k_{0}$
(notice that 
$\lim_{k\rightarrow \infty } f(\theta_{m_{k} } ) = b > (\hat{f} + b )/2$). 
Consequently, 
$\theta_{m_{k} } \in \hat{Q}$
and 
$0 < (b - \hat{f} )/2 \leq f(\theta_{m_{k} } ) - \hat{f} \leq \hat{\delta}$
for $k\geq k_{0}$
(notice that $f(\theta_{m_{k} } ) < b < \hat{f} + \hat{\delta}$
for $k\geq 0$). 
Then, owing to (\ref{1.1'}) (i.e., to Assumption \ref{a2.3}), 
we have 
\begin{align*} 
	0
	<
	(b - \hat{f} )/2
	\leq 
	f(\theta_{m_{k} } ) - \hat{f} 
	\leq 
	\hat{M} 
	\|\nabla f(\theta_{m_{k} } ) \|^{\hat{\mu} }
\end{align*}
for $k\geq k_{0}$. 
However, this directly contradicts the fact
$\lim_{k\rightarrow \infty } \|\nabla f(\theta_{m_{k} } ) \| = 0$. 
Hence, 
$\lim_{n\rightarrow \infty } \nabla f(\theta_{n} ) = 0$
on $\Lambda \setminus N_{0}$. 
\end{proof}

\begin{lemma} \label{lemma1.3'}
Suppose that Assumptions \ref{a1.1} -- \ref{a1.3} hold. 
Then, $\lim_{n\rightarrow \infty } f(\theta_{n} ) = \hat{f}$
on $\Lambda\setminus N_{0}$. 
\end{lemma}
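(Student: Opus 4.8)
The plan is to reduce the assertion to the inequality $\limsup_{n\to\infty} f(\theta_n) \le \hat{f}$ on $\Lambda\setminus N_0$: indeed, $\hat{f} = \liminf_{n\to\infty} f(\theta_n)$ by definition, and $\{f(\theta_n)\}_{n\ge0}$ is bounded on $\Lambda$ (there $\{\theta_n\}_{n\ge0}$ is bounded and $f$ is continuous), so this inequality immediately gives $\lim_{n\to\infty} f(\theta_n) = \hat{f}$ there. I would then establish the inequality by contradiction, reusing the mechanism of the second half of the proof of Lemma \ref{lemma1.3}, now considerably simplified by the fact that $\lim_{n\to\infty}\nabla f(\theta_n) = 0$ on $\Lambda\setminus N_0$ is already available from Lemma \ref{lemma1.3}.

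So, fix $\omega\in\Lambda\setminus N_0$ and suppose $\limsup_{n\to\infty} f(\theta_n) > \hat{f}$. Since $\hat{\delta} > 0$, I would pick a real number $b$ with $\hat{f} < b < \min\{\hat{f}+\hat{\delta},\,\limsup_{n\to\infty} f(\theta_n)\}$. Because $\liminf_{n\to\infty} f(\theta_n) = \hat{f} < b$ while $\limsup_{n\to\infty} f(\theta_n) > b$, the sequence $\{f(\theta_n)\}_{n\ge0}$ must pass from below $b$ to at or above $b$ infinitely often; that is, there is an increasing integer sequence $\{m_k\}_{k\ge0}$ with $f(\theta_{m_k}) < b \le f(\theta_{m_k+1})$ for every $k$ (if such one-step crossings occurred only finitely often, then past some index $f(\theta_n)<b$ would be inherited by all larger indices, contradicting $\limsup_{n\to\infty} f(\theta_n) > b$). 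In particular $m_k\to\infty$.

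Next I would invoke Lemma \ref{lemma1.2} with, say, $\varepsilon = 1$, evaluated at $n = m_k$. For all large $k$ one has $m_k > \sigma_1$, and, since $\alpha_{m_k}\to0$ and $\hat{t}>0$, also $m_k+1 \le a(m_k,\hat{t})$; hence (\ref{l1.2.1*}) gives
\begin{align*}
	0 < b - f(\theta_{m_k}) &\le f(\theta_{m_k+1}) - f(\theta_{m_k}) \\
	&\le \gamma_{m_k}^{-\hat{p}/\hat{\mu}}\,\|\nabla f(\theta_{m_k})\|\,\phi_1(w) + \hat{C}_1\,\gamma_{m_k}^{-2\hat{p}/\hat{\mu}}\,\big(\phi_1(w)\big)^2 .
\end{align*}
Here $\hat{p}/\hat{\mu} = \min\{r,\hat{r}\} > 0$, so $\gamma_{m_k}^{-\hat{p}/\hat{\mu}}\to0$, and together with $\|\nabla f(\theta_{m_k})\|\to0$ and $\phi_1(w)<\infty$ on $\Lambda$ this forces the right-hand side to $0$; hence $\lim_{k\to\infty} f(\theta_{m_k}) = b$. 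It follows that, for all large $k$, $\theta_{m_k}\in\hat{Q}$ (on $\Lambda$ one has $d(\theta_n,\hat{A})\to0$ and $\hat{Q}$ contains a fixed neighborhood of $\hat{A}$, as used already in the proof of Lemma \ref{lemma1.2}) and $0 < f(\theta_{m_k})-\hat{f} < \hat{\delta}$, so the Lojasiewicz inequality (\ref{1.1'}) yields
\begin{align*}
	0 < (b-\hat{f})/2 \le f(\theta_{m_k}) - \hat{f} \le \hat{M}\,\|\nabla f(\theta_{m_k})\|^{\hat{\mu}}
\end{align*}
for all large $k$. Letting $k\to\infty$ contradicts $\lim_{n\to\infty}\nabla f(\theta_n)=0$, and the reduction step then completes the proof.

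I do not expect any genuinely hard step: the proof is essentially the gluing of two ingredients already in hand — the one-step increment bound (\ref{l1.2.1*}) and the gradient decay of Lemma \ref{lemma1.3} — via the Lojasiewicz inequality (\ref{1.1'}). The only points that need a little care are the extraction of the crossing times $\{m_k\}$ and the routine verification that every residual term supplied by Lemma \ref{lemma1.2} vanishes in the limit, which is automatic once $\nabla f(\theta_n)\to0$ and $\gamma_n\to\infty$.
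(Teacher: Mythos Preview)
Your proof is correct and follows essentially the same route as the paper's: argue by contradiction, extract one-step crossing times of a level $b\in(\hat{f},\hat{f}+\hat{\delta})$, use Lemma~\ref{lemma1.2} (inequality (\ref{l1.2.1*})) to force $f(\theta_{m_k})\to b$, and then apply the Lojasiewicz inequality (\ref{1.1'}) to contradict $\nabla f(\theta_n)\to 0$ from Lemma~\ref{lemma1.3}. The only differences are cosmetic---you spell out why the right-hand side of (\ref{l1.2.1*}) vanishes and why $m_k+1\le a(m_k,\hat{t})$, whereas the paper cites these facts more tersely.
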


\begin{proof}
We use contradiction to prove the lemma's assertion: 
Suppose that 
$\hat{f} < \limsup_{n\rightarrow \infty } f(\theta_{n} )$
for some sample $\omega \in \Lambda\setminus N_{0}$
(notice that all formulas which follow in the proof correspond to this $\omega$). 
Then, there exists 
$a\in \mathbb{R}$ such that 
$\hat{f} < a < \hat{f} + \hat{\delta}$
and 
$\limsup_{n\rightarrow \infty } f (\theta_{n} ) > a$. 
Thus, there exists an increasing sequence 
$\{n_{k} \}_{k\geq 0}$ such that 
$f(\theta_{n_{k} } ) < a$ and 
$f(\theta_{n_{k}+1} ) \geq a$ for $k\geq 0$. 
On the other side, 
Lemma \ref{lemma1.2} (inequality (\ref{l1.2.1*})) implies 
\begin{align}\label{l1.3'.1}
	\limsup_{k\rightarrow \infty } 
	(f(\theta_{n_{k}+1 } ) - f(\theta_{n_{k} } ) )
	\leq 
	0. 
\end{align}
Since
\begin{align*}
	a
	>
	f(\theta_{n_{k} } )
	=
	f(\theta_{n_{k}+1 } )
	-
	(f(\theta_{n_{k}+1 } ) - f(\theta_{n_{k} } ) )
	\geq 
	a 
	-
	(f(\theta_{n_{k}+1 } ) - f(\theta_{n_{k} } ) )
\end{align*}
for $k\geq 0$, 
(\ref{l1.3'.1}) yields 
$\lim_{k\rightarrow \infty } f(\theta_{n_{k} } ) = a$. 
Consequently, 
there exists 
$k_{0} \geq 0$
such that 
$\theta_{n_{k} } \in \hat{Q}$
and 
$f(\theta_{n_{k} } ) \geq (\hat{f} + a ) /2$
for $k\geq k_{0}$
(notice that 
$\lim_{k\rightarrow \infty } f(\theta_{n_{k} } ) = a > (\hat{f} + a )/2$). 
Thus, 
$\theta_{n_{k} } \in \hat{Q}$
and 
$0 < (a - \hat{f} )/ 2 \leq f(\theta_{n_{k} } ) - \hat{f} \leq \hat{\delta}$
for $k\geq k_{0}$
(notice that $f(\theta_{n_{k} } ) < a < \hat{f} + \hat{\delta}$ for $k\geq 0$). 
Then, due to (\ref{1.1'})
(i.e., to Assumption \ref{a1.3}), we have 
\begin{align*}
	0
	<
	(a - \hat{f} )/2
	\leq 
	f(\theta_{n_{k} } ) - \hat{f} 
	\leq 
	\hat{M} \|\nabla f(\theta_{n_{k} } ) \|^{\hat{\mu} }
\end{align*}
for $k\geq k_{0}$. 
However, this directly contradicts the fact
$\lim_{n\rightarrow \infty } \nabla f(\theta_{n} ) = 0$. 
Hence, $\lim_{n\rightarrow \infty } f(\theta_{n} ) = \hat{f}$
on $\Lambda \setminus N_{0}$.
\end{proof}

\begin{lemma} \label{lemma1.4}
Suppose that Assumptions \ref{a1.1} -- \ref{a1.3} hold. 
Moreover, let $\varepsilon\in (0,\infty )$ be an arbitrary positive real number. 
Then, there exist random quantities  
$\hat{C}_{2}$, $\hat{C}_{3}$ 
(which are deterministic functions of $r$, $\hat{C}$, $\hat{M}$)
and 
a non-negative integer-valued random variable 
$\tau_{\varepsilon}$ such that
$1\leq \hat{C}_{2}, \hat{C}_{3} < \infty$, 
$0\leq \tau_{\varepsilon } < \infty$
everywhere and such that 
the following is true: 
\begin{align}
	&\label{l1.4.1*}
	\left(
	u(\theta_{a(n,\hat{t} ) } ) 
	- 
	u(\theta_{n} ) 
	+
	\hat{t} \|\nabla f(\theta_{n} ) \|^{2}/4
	\right)
	I_{A_{n,\varepsilon } }
	\leq 
	0, 
	\\
	&\label{l1.4.3*}
	\left(
	u(\theta_{a(n,\hat{t} ) } ) 
	- 
	u(\theta_{n} ) 
	+
	(\hat{t}/\hat{C}_{3} ) \: u(\theta_{n} ) 
	\right)
	I_{B_{n,\varepsilon } }
	\leq 
	0, 
	\\
	&\label{l1.4.5*}
	\left(
	v(\theta_{a(n,\hat{t} ) } ) 
	- 
	v(\theta_{n} ) 
	- 
	(\hat{t}/\hat{C}_{3} )
	(\phi_{\varepsilon }(w) )^{-\hat{\mu}/\hat{p} }
	\right)
	I_{C_{n,\varepsilon } }
	\geq 
	0
\end{align}
on $\Lambda\setminus N_{0}$ for $n\geq \tau_{\varepsilon}$, 
where 
\begin{align*}
	&
	\begin{aligned}[b]
		A_{n,\varepsilon}
		= &
		\left\{
		\gamma_{n}^{\hat{p} } 
		|u(\theta_{n} ) |
		\geq 
		\hat{C}_{2} 
		(\phi_{\varepsilon }(w) )^{\hat{\mu} }
		\right\}
		\cup
		\Big\{
		\gamma_{n}^{\hat{p} } 
		\|\nabla f(\theta_{n} ) \|^{2}
		\geq 
		\hat{C}_{2} 
		(\phi_{\varepsilon }(w) )^{\hat{\mu} } 
		\Big\}, 
	\end{aligned}
	\\
	&
	B_{n,\varepsilon}
	= 
	\left\{
	\gamma_{n}^{\hat{p} } 
	u(\theta_{n} ) 
	\geq 
	\hat{C}_{2} 
	(\phi_{\varepsilon }(w) )^{\hat{\mu} }
	\right\}
	\cap
	\{\hat{\mu} = 2 \}, 
	\\
	&
	C_{n,\varepsilon }
	=
	\left\{
	\gamma_{n}^{\hat{p} } u(\theta_{n} ) 
	\geq 
	\hat{C}_{2} 
	(\phi_{\varepsilon }(w) )^{\hat{\mu} }
	\right\}
	\cap
	\left\{
	u(\theta_{a(n,\hat{t} ) } ) > 0
	\right\}
	\cap
	\left\{
	\hat{\mu} < 2
	\right\}. 
\end{align*} 
\end{lemma}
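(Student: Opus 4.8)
The idea is to extract all three inequalities from the one–step estimate (\ref{l1.2.3*}) of Lemma~\ref{lemma1.2}, the Lojasiewicz bound (\ref{1.1'}), and the facts $\nabla f(\theta_{n})\to0$, $f(\theta_{n})\to\hat f$ on $\Lambda\setminus N_{0}$ established in Lemmas~\ref{lemma1.3} and \ref{lemma1.3'} (recall $u=f-\hat f$, so $u(\theta_{a(n,\hat t)})-u(\theta_{n})=f(\theta_{a(n,\hat t)})-f(\theta_{n})$). I would fix $\hat C_{2}$ to be a large deterministic function of $r,\hat C,\hat M$ — concretely something like $\hat C_{2}=\hat M\max\{(8/\hat t)^{2},\,8\hat C_{1}/\hat t\}$, with $\hat C_{1},\hat t$ as in Lemma~\ref{lemma1.2} — and set $\hat C_{3}=\max\{4\hat M,\,4(2r-1)\hat M^{2}\}$. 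Then I let $\tau_{\varepsilon}$ be a random index, put equal to $0$ off $\Lambda\setminus N_{0}$, beyond which $n>\sigma_{\varepsilon}$, $\theta_{n}\in\hat Q$, $|u(\theta_{n})|\le\hat\delta$, and both $\gamma_{n}$ is large enough and $u(\theta_{n})$ small enough for the finitely many explicit threshold inequalities used below to hold. On the complements of $A_{n,\varepsilon}$, $B_{n,\varepsilon}$, $C_{n,\varepsilon}$ the asserted inequalities are void because of the indicators, so only those events are at issue.

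\emph{The estimate (\ref{l1.4.1*}).} By (\ref{l1.2.3*}) it suffices to show
\[
	\gamma_{n}^{-\hat p/\hat\mu}\|\nabla f(\theta_{n})\|\,\phi_{\varepsilon}(w)
	+\hat C_{1}\gamma_{n}^{-2\hat p/\hat\mu}\bigl(\phi_{\varepsilon}(w)\bigr)^{2}
	\le\frac{\hat t}{4}\|\nabla f(\theta_{n})\|^{2}
	\quad\text{on }A_{n,\varepsilon}.
\]
On the part of $A_{n,\varepsilon}$ where $\gamma_{n}^{\hat p}\|\nabla f(\theta_{n})\|^{2}\ge\hat C_{2}(\phi_{\varepsilon}(w))^{\hat\mu}$ I substitute the resulting lower bound $\|\nabla f(\theta_{n})\|\ge\hat C_{2}^{1/2}(\phi_{\varepsilon}(w))^{\hat\mu/2}\gamma_{n}^{-\hat p/2}$; since $\hat\mu\le2$ the two error terms then carry non‑positive powers of $\gamma_{n}$, so for $\hat C_{2}$ large and, when $\hat\mu<2$, $n$ large, they are dominated by $\tfrac{\hat t}{4}\|\nabla f(\theta_{n})\|^{2}$. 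On the remaining part, where $\gamma_{n}^{\hat p}|u(\theta_{n})|\ge\hat C_{2}(\phi_{\varepsilon}(w))^{\hat\mu}$, I use (\ref{1.1'}) in the form $\|\nabla f(\theta_{n})\|^{\hat\mu}\ge|u(\theta_{n})|/\hat M$ to get $\gamma_{n}^{\hat p/\hat\mu}\|\nabla f(\theta_{n})\|\ge(\hat C_{2}/\hat M)^{1/\hat\mu}\phi_{\varepsilon}(w)$, and both error terms are again absorbed once $\hat C_{2}$ is large relative to $\hat t,\hat C_{1},\hat M$. A by‑product is $B_{n,\varepsilon}\cup C_{n,\varepsilon}\subseteq A_{n,\varepsilon}$, so (\ref{l1.4.1*}) is also in force on $B_{n,\varepsilon}$ and $C_{n,\varepsilon}$.

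\emph{The estimates (\ref{l1.4.3*}) and (\ref{l1.4.5*}).} On $B_{n,\varepsilon}$ one has $\hat\mu=2$ and $u(\theta_{n})>0$, so (\ref{1.1'}) gives $\|\nabla f(\theta_{n})\|^{2}\ge u(\theta_{n})/\hat M$; inserting this into (\ref{l1.4.1*}) yields $u(\theta_{a(n,\hat t)})-u(\theta_{n})\le-\tfrac{\hat t}{4\hat M}u(\theta_{n})$, which is (\ref{l1.4.3*}) because $\hat C_{3}\ge4\hat M$. On $C_{n,\varepsilon}$ one has $\hat\mu<2$ and $u(\theta_{n}),u(\theta_{a(n,\hat t)})>0$; now (\ref{1.1'}) gives $\|\nabla f(\theta_{n})\|^{2}\ge(u(\theta_{n})/\hat M)^{2/\hat\mu}$, and (\ref{l1.4.1*}) turns into $0<u(\theta_{a(n,\hat t)})\le u(\theta_{n})-c\,u(\theta_{n})^{2/\hat\mu}$ with $c=\hat t/(4\hat M^{2/\hat\mu})$. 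Applying the elementary bound $b^{-1/\hat p}-a^{-1/\hat p}\ge\hat p^{-1}a^{-1/\hat p-1}(a-b)$ for $0<b\le a$ (which holds since $t\mapsto t^{-1/\hat p-1}$ is decreasing) with $a=u(\theta_{n})$, $b=u(\theta_{a(n,\hat t)})$ gives
\[
	v(\theta_{a(n,\hat t)})-v(\theta_{n})\ge\frac{c}{\hat p}\,u(\theta_{n})^{\,2/\hat\mu-1-1/\hat p}.
\]

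\emph{The main obstacle.} What is delicate is the sign of the exponent $2/\hat\mu-1-1/\hat p$. Since $\hat p=\hat\mu\min\{r,\hat r\}$ and $\hat r=1/(2-\hat\mu)$ (with $\hat\mu<2$ on $C_{n,\varepsilon}$), one has $1/\hat p\ge(2-\hat\mu)/\hat\mu=2/\hat\mu-1$, so the exponent is $\le0$, and it vanishes precisely when $r\ge\hat r$. If it is strictly negative, then $u(\theta_{n})\to0$ makes $u(\theta_{n})^{\,2/\hat\mu-1-1/\hat p}\to\infty$, so for $n\ge\tau_{\varepsilon}$ the right‑hand side exceeds the fixed quantity $(\hat t/\hat C_{3})(\phi_{\varepsilon}(w))^{-\hat\mu/\hat p}$. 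If it vanishes, then $r\ge\hat r$ forces $\hat p=\hat\mu\hat r\le2r-1$ and $\phi(w)\in\{1,1+w\}$, hence $\phi_{\varepsilon}(w)\ge1$ and $(\phi_{\varepsilon}(w))^{-\hat\mu/\hat p}\le1$, while the right‑hand side equals the constant $c/\hat p=\hat t/(4\hat p\hat M^{2/\hat\mu})\ge\hat t/(4(2r-1)\hat M^{2})$, so the choice $\hat C_{3}\ge4(2r-1)\hat M^{2}$ gives (\ref{l1.4.5*}). All the ``$n$ large'' reductions used above hold for $n\ge\tau_{\varepsilon}$ because $\gamma_{n}\to\infty$ and $u(\theta_{n})\to0$, which completes the plan.
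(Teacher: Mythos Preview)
Your proposal is correct and follows essentially the same route as the paper: both extract all three inequalities from the one-step bound (\ref{l1.2.3*}), the Lojasiewicz inequality (\ref{1.1'}), and the convergence $u(\theta_n)\to 0$ from Lemmas~\ref{lemma1.3}--\ref{lemma1.3'}, with $\tau_\varepsilon$ chosen large enough to absorb the finitely many ``eventually true'' threshold conditions. The only notable difference is in (\ref{l1.4.5*}) for $\hat\mu<2$: the paper replaces $u(\theta_n)^{2/\hat\mu}$ by the cruder $u(\theta_n)^{1+1/\hat p}$ (using $2/\hat\mu\le 1+1/\hat p$ and $0<u(\theta_n)\le 1$) to obtain the $n$-independent lower bound $\hat t/(4\hat p\hat M^{2})$ directly, whereas you keep the sharper exponent and split into the sub-cases $r\ge\hat r$ (exponent zero) and $r<\hat r$ (negative exponent, handled by enlarging $\tau_\varepsilon$)---your split in fact makes the role of the factor $(\phi_\varepsilon(w))^{-\hat\mu/\hat p}$ more transparent.
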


\begin{remark}
Inequalities (\ref{l1.4.1*}) -- (\ref{l1.4.5*}) can be represented in 
the following equivalent form: 
Relations 
\begin{align}
	&\label{l1.4.21*}
	\left(
	\gamma_{n}^{\hat{p} } 
	|u(\theta_{n} ) |
	\geq 
	\hat{C}_{2} 
	(\phi_{\varepsilon }(w) )^{\hat{\mu} }
	\;\vee\;
	\gamma_{n}^{\hat{p} } 
	\|\nabla f(\theta_{n} ) \|^{2}
	\geq 
	\hat{C}_{2} 
	(\phi_{\varepsilon }(w) )^{\hat{\mu} } 
	\right)
	\; \wedge \; 
	n > \tau_{\varepsilon} 
	\nonumber\\
	&
	\Longrightarrow
	u(\theta_{a(n,\hat{t} ) } ) 
	\leq
	u(\theta_{n} ) 
	-
	\hat{t} \|\nabla f(\theta_{n} ) \|^{2}/4, 
	\\
	&\label{l1.4.23*}
	\gamma_{n}^{\hat{p} } 
	u(\theta_{n} ) 
	\geq 
	\hat{C}_{2} 
	(\phi_{\varepsilon }(w) )^{\hat{\mu} }
	\;\wedge\;
	\hat{\mu}=2
	\; \wedge \; 
	n > \tau_{\varepsilon} 
	\nonumber\\
	&
	\Longrightarrow
	u(\theta_{a(n,\hat{t} ) } ) 
	\leq 
	\left(
	1
	-
	\hat{t} /\hat{C}_{3} 
	\right)
	u(\theta_{n} ), 
	\\
	&\label{l1.4.25*}
	\gamma_{n}^{\hat{p} } u(\theta_{n} ) 
	\geq 
	\hat{C}_{2} 
	(\phi_{\varepsilon }(w) )^{\hat{\mu} }
	\;\wedge\;
	u(\theta_{a(n,\hat{t} ) } ) > 0
	\;\wedge\;
	\hat{\mu} < 2
	\; \wedge \; 
	n > \tau_{\varepsilon} 
	\nonumber\\
	&
	\Longrightarrow
	v(\theta_{a(n,\hat{t} ) } ) 
	\geq
	v(\theta_{n} ) 
	+
	(\hat{t}/\hat{C}_{3} )
	(\phi_{\varepsilon }(w) )^{-\hat{\mu}/\hat{p} }
\end{align}
are true on $\Lambda\setminus N_{0}$. 
\end{remark}

\begin{proof}
Let 
$\tilde{C} = 8\hat{C}_{1}^{1/2}/\hat{t}$,   
$\hat{C}_{2} = \tilde{C}^{2} \hat{M}$
and 
$\hat{C}_{3} = 8 \hat{M}^{2} \max\{1,r\}$, 
while 
\begin{align}
	&
	\tilde{\tau}_{1}
	=
	\max\left(\left\{
	n\geq 0: \theta_{n} \not\in \hat{Q}
	\right\}
	\cup \{0\}
	\right),
	\nonumber \\
	&
	\tilde{\tau}_{2}
	=
	\max\left(
	\left\{
	n\geq 0: 
	|u(\theta_{n} ) |
	> 
	\hat{\delta} 
	\right\}
	\cup \{0\}
	\right), 
	\nonumber \\
	&\label{l1.4.31}
	\tilde{\tau}_{3,\varepsilon}
	=
	\max\left(
	\left\{
	n\geq 0: 
	\gamma_{n}^{-\hat{p}/2} 
	(\phi_{\varepsilon }(w) )^{\hat{\mu}/2 }
	< 
	\gamma_{n}^{-\hat{p}/\hat{\mu} } \phi_{\varepsilon }(w)
	\right\}
	\cup \{0\}
	\right)
\end{align}
and
$\tau_{\varepsilon} =
\max\{\sigma_{\varepsilon}, \tilde{\tau}_{1}, \tilde{\tau}_{2}, 
\tilde{\tau}_{3,\varepsilon} \} 
I_{\Lambda \setminus N_{0} }$. 
Obviously,  
$\tau_{\varepsilon }$ is well-defined. 
On the other side, 
Lemmas \ref{lemma1.3}, \ref{lemma1.4}
imply 
$0\leq \tau_{\varepsilon } < \infty$
everywhere
(in order to conclude that 
$\tilde{\tau}_{2}$ is finite, 
notice that 
$\lim_{n\rightarrow \infty } u(\theta_{n} ) = 0$
on 
$\Lambda \setminus N_{0}$; 
in order to deduce that 
$\tilde{\tau}_{3,\varepsilon}$ is finite,
notice that  
$\hat{p}/2 < \hat{p}/\hat{\mu}$
when $\hat{\mu}<2$, 
and that the left and right hand sides of the inequality in (\ref{l1.4.31})
are equal when $\hat{\mu}=2$).  
Moreover, we have
\begin{align}\label{l1.4.1'}
	\gamma_{n}^{-\hat{p}/2} 
	(\phi_{\varepsilon }(w) )^{\hat{\mu}/2 }
	\geq 
	\gamma_{n}^{-\hat{p}/\hat{\mu} } \phi_{\varepsilon }(w)
\end{align}
on $\Lambda \setminus N_{0}$ for 
$n>\tau_{\varepsilon}$. 
Since 
$\tau_{\varepsilon } \geq \sigma_{\varepsilon }$
on $\Lambda \setminus N_{0}$, 
Lemma \ref{lemma1.2} (inequality (\ref{l1.2.3*}))
yields
\begin{align}\label{l1.4.3'}
		u(\theta_{a(n,\hat{t} ) } ) - u(\theta_{n} ) 
		\leq &
		-
		\hat{t} \|\nabla f(\theta_{n} ) \|^{2}/2
		+
		\gamma_{n}^{-\hat{p}/\hat{\mu} } 
		\|\nabla f(\theta_{n} ) \| \phi_{\varepsilon }(w) 
		+ 
		\hat{C}_{1} 
		\gamma_{n}^{-2\hat{p}/\hat{\mu} } 
		(\phi_{\varepsilon }(w) )^{2} 
\end{align}
on $\Lambda\setminus N_{0}$ for 
$n>\tau_{\varepsilon}$. 
As $\theta_{n} \in \hat{Q}$
and $|u(\theta_{n} ) | \leq \hat{\delta}$
on $\Lambda\setminus N_{0}$ for 
$n>\tau_{\varepsilon }$, 
(\ref{1.1'})
(i.e., Assumption \ref{a1.3}) implies
\begin{align}\label{l1.4.5'}
	|u(\theta_{n} ) |
	\leq 
	\hat{M} \|\nabla f(\theta_{n} ) \|^{\hat{\mu} }
\end{align}
on $\Lambda\setminus N_{0}$ for 
$n>\tau_{\varepsilon }$.

Let $\omega$ be an arbitrary sample from 
$\Lambda\setminus N_{0}$
(notice that all formulas which follow in the proof correspond to this 
$\omega$). 
First, we show (\ref{l1.4.1*}). 
We proceed by contradiction: 
Suppose that 
(\ref{l1.4.1*}) is violated for some 
$n>\tau_{\varepsilon}$. 
Therefore, 
\begin{align} \label{l1.4.1} 
	u(\theta_{a(n,\hat{t} ) } ) - u(\theta_{n} )
	>
	-\hat{t} \|\nabla f(\theta_{n} ) \|^{2}/4 
\end{align}
and at least one of the following two inequalities is true: 
\begin{align}
		&\label{l1.4.3}
		|u(\theta_{n} ) |
		\geq 
		\hat{C}_{2} \hat{M}
		\gamma_{n}^{-\hat{p} } 
		(\phi_{\varepsilon }(w) )^{\hat{\mu} },
		\\
		&\label{l1.4.5}
		\|\nabla f(\theta_{n} ) \|^{2}
		\geq 
		\hat{C}_{2} 
		\gamma_{n}^{-\hat{p} } 
		(\phi_{\varepsilon }(w) )^{\hat{\mu} }.  
\end{align}
If (\ref{l1.4.3}) holds, 
then (\ref{l1.4.5'}) implies 
\begin{align*}
	\|\nabla f(\theta_{n} ) \|
	\geq 
	(|u(\theta_{n} ) |/\hat{M} )^{1/\hat{\mu} }
	\geq 
	(\hat{C}_{2}/\hat{M} )^{1/\hat{\mu} } \gamma_{n}^{-\hat{p}/\hat{\mu} } 
	\phi_{\varepsilon }(w)
	\geq 
	\tilde{C} \gamma_{n}^{-\hat{p}/\hat{\mu} } \phi_{\varepsilon }(w) 
\end{align*}
(notice that 
$(\hat{C}_{2}/\hat{M} )^{1/\hat{\mu} } \geq (\hat{C}_{2}/\hat{M} )^{1/2} = \tilde{C}$ 
owing to $\hat{\mu}\leq 2$). 
On the other side, if (\ref{l1.4.5}) is satisfied, 
then (\ref{l1.4.1'}) yields 
\begin{align*}
	\|\nabla f(\theta_{n} ) \|
	\geq 
	\hat{C}_{2}^{1/2} 
	\gamma_{n}^{-\hat{p}/2} 
	(\phi_{\varepsilon }(w) )^{\hat{\mu}/2 }
	\geq 
	\tilde{C} 
	\gamma_{n}^{-\hat{p}/\hat{\mu} } \phi_{\varepsilon }(w). 
\end{align*}
Thus, as a result of one of (\ref{l1.4.3}), (\ref{l1.4.5}), 
we get 
\begin{align*}
	\|\nabla f(\theta_{n} ) \|
	\geq 
	\tilde{C} 
	\gamma_{n}^{-\hat{p}/\hat{\mu} } \phi_{\varepsilon }(w). 
\end{align*}
Consequently, 
\begin{align*}
	&
	\hat{t} \|\nabla f(\theta_{n} ) \|^{2}/8
	\geq 
	(\tilde{C} \hat{t} /8 ) \gamma_{n}^{-\hat{p}/\hat{\mu} } 
	\|\nabla f(\theta_{n} ) \|
	\phi_{\varepsilon }(w) 
	\geq 
	\gamma_{n}^{-\hat{p}/\hat{\mu} } 
	\|\nabla f(\theta_{n} ) \|
	\phi_{\varepsilon }(w), 
	\\
	&
	\hat{t} \|\nabla f(\theta_{n} ) \|^{2}/8
	\geq 
	(\tilde{C}^{2} \hat{t} /8 ) \gamma_{n}^{-2\hat{p}/\hat{\mu} }
	(\phi_{\varepsilon }(w) )^{2}
	\geq 
	\hat{C}_{1} \gamma_{n}^{-2\hat{p}/\hat{\mu} } (\phi_{\varepsilon }(w) )^{2}
\end{align*}
(notice that 
$\tilde{C} \hat{t} /8 = \hat{C}_{1}^{1/2} \geq 1$, 
$\tilde{C}^{2} \hat{t} /8 = 8\hat{C}_{1}/\hat{t} > \hat{C}_{1}$). 
Combining this with (\ref{l1.4.3'}), 
we get 
\begin{align}\label{l1.4.7}
	u(\theta_{a(n,\hat{t} ) } ) - u(\theta_{n} )
	\leq 
	-\hat{t} \|\nabla f(\theta_{n} ) \|^{2}/4,  
\end{align}
which directly contradicts (\ref{l1.4.1}). 
Hence, (\ref{l1.4.1*}) is true for $n > \tau_{\varepsilon }$. 
Then, as a result of (\ref{l1.4.5'}) and 
the fact that $B_{n,\varepsilon } \subseteq A_{n,\varepsilon }$
for $n\geq 0$, 
we get
\begin{align*}
	&
	\left(
	u(\theta_{a(n,\hat{t} ) } ) 
	- 
	u(\theta_{n} ) 
	+
	(\hat{t}/\hat{C}_{3} ) \: u(\theta_{n} ) 
	\right)
	I_{B_{n,\varepsilon } }
	\\
	&
	\leq
	\left(
	u(\theta_{a(n,\hat{t} ) } ) 
	- 
	u(\theta_{n} ) 
	+
	(\hat{M} \hat{t} /\hat{C}_{3} ) \: \|\nabla f(\theta_{n} ) \|^{2}
	\right)
	I_{B_{n,\varepsilon } }
	\\
	&
	\leq
	\left(
	u(\theta_{a(n,\hat{t} ) } ) 
	- 
	u(\theta_{n} ) 
	+
	\hat{t} \|\nabla f(\theta_{n} ) \|^{2}/4
	\right)
	I_{B_{n,\varepsilon } }
	\leq 
	0
\end{align*}
for $n> \tau_{\varepsilon}$
(notice that $u(\theta_{n} ) > 0$ on $B_{n,\varepsilon}$ for 
each $n\geq 0$; also notice that $\hat{C}_{3} \geq 4 \hat{M}$). 
Thus, (\ref{l1.4.3*}) is true for $n> \tau_{\varepsilon}$. 

Now, let us prove (\ref{l1.4.5*}). 
To do so, we again use contradiction: 
Suppose that (\ref{l1.4.3*}) does not hold for some $n>\tau_{\varepsilon }$. 
Consequently, we have 
$\hat{\mu} < 2$, $u(\theta_{a(n,\hat{t} ) } ) > 0$ and 
\begin{align}
	&\label{l1.4.9}
	\gamma_{n}^{\hat{p} } \: u(\theta_{n} ) 
	\geq 
	\hat{C}_{2} 
	(\phi_{\varepsilon }(w) )^{\hat{\mu} }
	>
	0, 
	\\
	&\label{l1.4.11}
	v(\theta_{a(n,\hat{t} ) } ) 
	- 
	v(\theta_{n} ) 
	<
	(\hat{t} / \hat{C}_{3} ) 
	(\phi_{\varepsilon }(w) )^{-\hat{\mu}/\hat{p} }.  
\end{align}
Combining (\ref{l1.4.9}) with
(already proved) (\ref{l1.4.1*}), 
we get (\ref{l1.4.7}), 
while $\hat{\mu} < 2$ implies 
 \begin{align} \label{l1.4.11'}
	2/\hat{\mu} 
	= 
	1 + 1/(\hat{\mu} \hat{r} ) 
	\leq 
	1 + 1/\hat{p}
\end{align}
(notice that 
$\hat{r} = 1/(2 - \hat{\mu} )$
owing to $\hat{\mu} < 2$; 
also notice that 
$\hat{p} = \hat{\mu} \min\{r,\hat{r} \} \leq \hat{\mu} \hat{r}$). 
As $0 < u(\theta_{n} ) \leq \hat{\delta} \leq 1$
(due to (\ref{l1.4.9}) and the definition of $\tau_{\varepsilon }$), 
inequalities (\ref{l1.4.5'}), (\ref{l1.4.11'})  
yield
\begin{align}\label{l1.4.15}
	\|\nabla f(\theta_{n} ) \|^{2}
	\geq
	\left(
	u(\theta_{n} )/\hat{M} 
	\right)^{2/\hat{\mu} }
	\geq 
	\left(
	u(\theta_{n} )
	\right)^{1+1/\hat{p} }
	/\hat{M}^{2}  
\end{align}
(notice that $\hat{M}^{2/\hat{\mu} } \leq \hat{M}^{2}$ due to 
$\hat{\mu} < 2$, $\hat{M}\geq 1$). 
Since 
$\|\nabla f(\theta_{n} ) \| > 0$
and 
$0 < u(\theta_{a(n,\hat{t} ) } ) < u(\theta_{n} )$
(due to (\ref{l1.4.5'}), (\ref{l1.4.7}), (\ref{l1.4.9})), 
inequalities (\ref{l1.4.7}), (\ref{l1.4.15}) give
\begin{align*}
	\frac{\hat{t} }{4}
	\leq 
	\frac{u(\theta_{n} ) - u(\theta_{a(n,\hat{t} ) } ) }
	{\|\nabla f(\theta_{n} ) \|^{2} }
	\leq &
	\hat{M}^{2}
	\frac{u(\theta_{n} ) - u(\theta_{a(n,\hat{t} ) } ) }
	{\left(u(\theta_{n} ) \right)^{1+\hat{p} } }
	\\
	= &
	\hat{M}^{2}
	\int^{u(\theta_{n} ) }_{u(\theta_{a(n,\hat{t} ) } ) }
	\frac{du}{\left(u(\theta_{n} ) \right)^{1+\hat{p} } }
	\\
	\leq &
	\hat{M}^{2}
	\int^{u(\theta_{n} ) }_{u(\theta_{a(n,\hat{t} ) } ) }
	\frac{du}{u^{1+\hat{p} } }
	\\
	= &
	\hat{p}
	\hat{M}^{2}
	\left(
	v(\theta_{a(n,\hat{t} ) } )
	-
	v(\theta_{n} )
	\right).
\end{align*}
Therefore, 
\begin{align*}
	v(\theta_{a(n,\hat{t} ) } )
	-
	v(\theta_{n} )
	\geq 
	\hat{t}/(4\hat{p} \hat{M}^{2} )
	\geq
	(\hat{t}/\hat{C}_{3} ) 
\end{align*}
(notice that 
$\hat{p} \leq r$, $\hat{C}_{3} \geq 4 r \hat{M}^{2}$), 
which directly contradicts (\ref{l1.4.11}). 
Thus, (\ref{l1.4.5*}) is satisfied for $n>\tau_{\varepsilon}$. 
\end{proof}

\begin{lemma} \label{lemma1.5}
Suppose that Assumptions \ref{a1.1} -- \ref{a1.3} hold. 
Moreover, let $\varepsilon \in (0,\infty )$
be an arbitrary positive real number. 
Then, 
\begin{align}	\label{l1.5.1*} 
	u(\theta_{n} ) 
	\geq 
	-
	\hat{C}_{2} \gamma_{n}^{-\hat{p} } 
	(\phi_{\varepsilon }(w) )^{\hat{\mu} }
\end{align}
on $\Lambda\setminus N_{0}$
for $n > \tau_{\varepsilon }$. 
Furthermore, 
there exists a random quantity  
$\hat{C}_{4} \in [1,\infty )$ 
(which is a deterministic function of $r$, $\hat{C}$, $\hat{M}$)
such that 
$1\leq \hat{C}_{4} < \infty$ everywhere 
and such that 
\begin{align} \label{l1.5.3*}
	\|\nabla f(\theta_{n} ) \|^{2}
	\leq 
	\hat{C}_{4} 
	\left(
	\varphi(u(\theta_{n} ) )
	+ 
	\gamma_{n}^{-\hat{p} } (\phi_{\varepsilon }(w) )^{\hat{\mu} } 
	\right)
\end{align}
on $\Lambda\setminus N_{0}$ 
for $n> \tau_{\varepsilon }$, 
where
function $\varphi(\cdot )$
is defined by 
$\varphi(x) = x \:{\rm I}_{(0,\infty )}(x)$, $x\in \mathbb{R}$. 
\end{lemma}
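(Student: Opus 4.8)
The plan is to derive both bounds from inequality (\ref{l1.4.1*}) of Lemma~\ref{lemma1.4} (equivalently, from its explicit form (\ref{l1.4.21*})), together with the objective convergence $\lim_{n\rightarrow\infty}u(\theta_n)=\lim_{n\rightarrow\infty}(f(\theta_n)-\hat f)=0$ established in Lemma~\ref{lemma1.3'}. Throughout one works on a fixed sample $\omega\in\Lambda\setminus N_0$ and repeatedly uses three elementary facts: $\hat p>0$, so $\gamma\mapsto\gamma^{-\hat p}$ is decreasing; $a(n,\hat t)\geq n$, so $\gamma_{a(n,\hat t)}\geq\gamma_n>0$ for $n\geq1$; and $a(n,\hat t)>\tau_\varepsilon$ whenever $n>\tau_\varepsilon$, so Lemma~\ref{lemma1.4} applies again at the index $a(n,\hat t)$.

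For (\ref{l1.5.1*}) I would argue by contradiction. Suppose $u(\theta_n)<-\hat C_2\gamma_n^{-\hat p}(\phi_\varepsilon(w))^{\hat\mu}$ for some $n>\tau_\varepsilon$. Then $\gamma_n^{\hat p}|u(\theta_n)|>\hat C_2(\phi_\varepsilon(w))^{\hat\mu}$, so (\ref{l1.4.21*}) gives $u(\theta_{a(n,\hat t)})\leq u(\theta_n)-\hat t\|\nabla f(\theta_n)\|^2/4\leq u(\theta_n)$; since $\gamma_{a(n,\hat t)}\geq\gamma_n$ this forces $u(\theta_{a(n,\hat t)})<-\hat C_2\gamma_{a(n,\hat t)}^{-\hat p}(\phi_\varepsilon(w))^{\hat\mu}$, i.e. the same violating inequality holds at $a(n,\hat t)>\tau_\varepsilon$. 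Iterating along $n_0=n$, $n_{k+1}=a(n_k,\hat t)$ produces $u(\theta_{n_k})\leq u(\theta_n)<0$ for every $k$, while $n_k\rightarrow\infty$ because $\gamma_{n_{k+1}}-\gamma_{n_k}\geq2\hat t/3$ for $n_k>\tau_\varepsilon$ (cf. (\ref{l1.2.3})). This contradicts $\lim_{n\rightarrow\infty}u(\theta_n)=0$, proving (\ref{l1.5.1*}).

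For (\ref{l1.5.3*}) I would fix $n>\tau_\varepsilon$ and split into two cases. If $\gamma_n^{\hat p}\|\nabla f(\theta_n)\|^2<\hat C_2(\phi_\varepsilon(w))^{\hat\mu}$, the estimate holds for any $\hat C_4\geq\hat C_2$ since $\varphi\geq0$. Otherwise $\gamma_n^{\hat p}\|\nabla f(\theta_n)\|^2\geq\hat C_2(\phi_\varepsilon(w))^{\hat\mu}$, so (\ref{l1.4.21*}) yields $\hat t\|\nabla f(\theta_n)\|^2/4\leq u(\theta_n)-u(\theta_{a(n,\hat t)})$; here I would bound $u(\theta_n)\leq\varphi(u(\theta_n))$ and, using the already-proven (\ref{l1.5.1*}) at $a(n,\hat t)$ together with $\gamma_{a(n,\hat t)}\geq\gamma_n$, bound $-u(\theta_{a(n,\hat t)})\leq\hat C_2\gamma_{a(n,\hat t)}^{-\hat p}(\phi_\varepsilon(w))^{\hat\mu}\leq\hat C_2\gamma_n^{-\hat p}(\phi_\varepsilon(w))^{\hat\mu}$, obtaining $\|\nabla f(\theta_n)\|^2\leq(4/\hat t)\bigl(\varphi(u(\theta_n))+\hat C_2\gamma_n^{-\hat p}(\phi_\varepsilon(w))^{\hat\mu}\bigr)$. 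Setting $\hat C_4=4\hat C_2/\hat t$ covers both cases; since $\hat C_2\geq1\geq\hat t$ one has $\hat C_4\geq1$ everywhere, and $\hat C_4$ is a deterministic function of $\hat C,\hat M$ — hence of $r,\hat C,\hat M$ — because $\hat C_2$ and $\hat t$ are.

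None of the individual estimates is hard; the one step deserving attention is the forward-propagation in the first part, where it is essential that the violating inequality at index $n$ transfers verbatim to $a(n,\hat t)$. This rests simultaneously on $u(\theta_{a(n,\hat t)})\leq u(\theta_n)$ (from (\ref{l1.4.21*})) and on the monotonicity $\gamma_{a(n,\hat t)}^{-\hat p}\leq\gamma_n^{-\hat p}$, so that the iteration produces a subsequence contradicting the convergence $f(\theta_n)\to\hat f$ of Lemma~\ref{lemma1.3'}.
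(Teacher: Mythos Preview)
Your proposal is correct and follows essentially the same approach as the paper. The only cosmetic difference is that for (\ref{l1.5.3*}) you argue by a direct case split on whether $\gamma_n^{\hat p}\|\nabla f(\theta_n)\|^2$ exceeds $\hat C_2(\phi_\varepsilon(w))^{\hat\mu}$, whereas the paper phrases the same computation as a contradiction; the constants, the use of (\ref{l1.4.21*}), and the appeal to (\ref{l1.5.1*}) at $a(n,\hat t)$ are identical.
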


\begin{proof}
Let 
$\hat{C}_{4} = 4 \hat{C}_{2} / \hat{t}$, 
while 
$\omega$ is an arbitrary sample from $\Lambda\setminus N_{0}$
(notice that all formulas which follow in the proof correspond to 
this $\omega$). 

First, 
we prove (\ref{l1.5.1*}). 
To do so, we use contradiction:  
Assume that (\ref{l1.5.1*}) is not satisfied for 
some $n > \tau_{\varepsilon }$. 
Define $\{n_{k} \}_{k\geq 0}$ recursively by
$n_{0}=n$
and  
$n_{k} = a(n_{k-1},\hat{t} )$ 
for $k\geq 1$.  
Let us show by induction that 
$\{u(\theta_{n_{k} } ) \}_{k\geq 0}$ is non-increasing: 
Suppose that 
$u(\theta_{n_{l} } ) \leq u(\theta_{n_{l-1} } )$ for 
$0\leq l \leq k$. 
Consequently, 
\begin{align*}
	u(\theta_{n_{k} } ) 
	\leq 
	u(\theta_{n_{0} } ) 
	\leq 
	-
	\hat{C}_{2} \gamma_{n_{0} }^{-\hat{p} }
	(\phi_{\varepsilon }(w) )^{\hat{\mu} }
	\leq 
	-
	\hat{C}_{2} \gamma_{n_{k} }^{-\hat{p} }
	(\phi_{\varepsilon }(w) )^{\hat{\mu} } 
\end{align*}
(notice that $\{\gamma_{n} \}_{n\geq 0}$ is increasing). 
Then, Lemma \ref{lemma1.4} (relations (\ref{l1.4.1*}), (\ref{l1.4.21*})) yields 
\begin{align*}
	u(\theta_{n_{k+1} } ) - u(\theta_{n_{k} } ) 
	\leq 
	-
	\hat{t} \|\nabla f(\theta_{n_{k} } ) \|^{2}/4 
	\leq 
	0,  
\end{align*}
i.e., $u(\theta_{n_{k+1} } ) \leq u(\theta_{n_{k} } )$. 
Thus, $\{u(\theta_{n_{k} } ) \}_{k\geq 0}$ is non-increasing. 
Therefore,
\begin{align*}
	\limsup_{n\rightarrow \infty } 
	u(\theta_{n_{k} } ) 
	\leq 
	u(\theta_{n_{0} } ) 
	< 
	0. 
\end{align*}
However, this is not possible, as 
$\lim_{n\rightarrow \infty } u(\theta_{n} ) = 0$
(due to Lemma \ref{lemma1.3'}). 
Hence, (\ref{l1.5.1*}) indeed holds 
for $n> \tau_{\varepsilon }$. 

Now, (\ref{l1.5.3*}) is demonstrated. 
Again, we proceed by contradiction: 
Suppose that (\ref{l1.5.3*}) is violated for some 
$n> \tau_{\varepsilon }$. 
Consequently, 
\begin{align*}
	\|\nabla f(\theta_{n} ) \|^{2}
	\geq 
	\hat{C}_{4} \gamma_{n}^{-\hat{p} } (\phi_{\varepsilon }(w) )^{\hat{\mu} }
	\geq 
	\hat{C}_{2} \gamma_{n}^{-\hat{p} } (\phi_{\varepsilon }(w) )^{\hat{\mu} } 
\end{align*}
(notice that $\hat{C}_{4} \geq \hat{C}_{2}$),
which, together with Lemma \ref{lemma1.4}
(relations (\ref{l1.4.1*}), (\ref{l1.4.21*})), yields 
\begin{align*}
	u(\theta_{a(n,\hat{t} ) } ) - u(\theta_{n} )
	\leq 
	-\hat{t} \|\nabla f(\theta_{n} ) \|^{2}/4. 
\end{align*}
Then, (\ref{l1.5.1*}) implies 
\begin{align*}
	\|\nabla f(\theta_{n} ) \|^{2}
	\leq &
	(4/\hat{t} )
	\left(
	u(\theta_{n} ) 
	-
	u(\theta_{a(n,\hat{t} )} ) 
	\right)
	\\
	\leq &
	(4/\hat{t} )
	\left(
	\varphi(u(\theta_{n} ) )
	+
	\hat{C}_{2} \gamma_{a(n,\hat{t} ) }^{-\hat{p} } 
	(\phi_{\varepsilon }(w) )^{\hat{\mu} }
	\right)
	\\
	\leq &
	\hat{C}_{4} 
	\left(
	\varphi(u(\theta_{n} ) )
	+ 
	\gamma_{n}^{-\hat{p} } 
	(\phi_{\varepsilon }(w) )^{\hat{\mu} }
	\right). 
\end{align*}
However, this directly contradicts our assumption 
that $n$ violates (\ref{l1.5.3*}). 
Thus, (\ref{l1.5.3*}) is indeed satisfied for 
$n>\tau_{\varepsilon }$. 
\end{proof}

\begin{lemma} \label{lemma1.6}
Suppose that Assumptions \ref{a1.1} -- \ref{a1.3} hold. 
Then, 
there exists a random quantity 
$\hat{C}_{5}$ 
(which is a deterministic function of $r$, $\hat{C}$, $\hat{M}$)
such that 
$1\leq \hat{C}_{5} < \infty$ everywhere
and 
such that 
\begin{align} \label{l1.6.1*}
	\liminf_{n\rightarrow \infty } 
	\gamma_{n}^{\hat{p} } \:
	u(\theta_{n} ) 
	\leq 
	\hat{C}_{5}
	(\phi(w) )^{\hat{\mu} }
\end{align}
on $\Lambda\setminus N_{0}$. 
\end{lemma}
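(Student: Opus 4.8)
The plan is to argue by contradiction and to reduce the statement, via Lemma~\ref{lemma1.4}, to a one-step iteration along the sparse grid $n_{0},\,a(n_{0},\hat{t}),\,a(a(n_{0},\hat{t}),\hat{t}),\dots$. Suppose (\ref{l1.6.1*}) fails for some $\omega\in\Lambda\setminus N_{0}$, so that $L:=\liminf_{n\to\infty}\gamma_{n}^{\hat{p}}u(\theta_{n})>\hat{C}_{5}(\phi(w))^{\hat{\mu}}$, where $\hat{C}_{5}\in[1,\infty)$ is a constant to be fixed, as an explicit function of $r,\hat{C},\hat{M}$, at the end. Since $\phi_{\varepsilon}(w)=\phi(w)+\varepsilon\downarrow\phi(w)$ as $\varepsilon\downarrow0$ and (as I will arrange) $\hat{C}_{5}\ge\hat{C}_{2}$, I can fix $\varepsilon\in(0,\infty)$ small enough that $\hat{C}_{2}(\phi_{\varepsilon}(w))^{\hat{\mu}}<L$ (and, when $\hat{\mu}<2$, also $\hat{C}_{3}^{\hat{p}}(\phi_{\varepsilon}(w))^{\hat{\mu}}<L$; see the last paragraph), and then an index $n_{0}>\tau_{\varepsilon}$ such that $\gamma_{n}^{\hat{p}}u(\theta_{n})\ge\hat{C}_{2}(\phi_{\varepsilon}(w))^{\hat{\mu}}$ for all $n\ge n_{0}$; note that $u(\theta_{n})>0$ for such $n$, since $\phi_{\varepsilon}(w)\ge\varepsilon>0$ and $\gamma_{n}>0$. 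I then set $n_{k+1}=a(n_{k},\hat{t})$, so that $\{n_{k}\}$ is strictly increasing, $0<\gamma_{n_{k+1}}-\gamma_{n_{k}}\le\hat{t}$, and hence $\gamma_{n_{k}}\le\gamma_{n_{0}}+k\hat{t}$ for every $k\ge0$. Because $n_{k}\ge n_{0}>\tau_{\varepsilon}$ and $\gamma_{n_{k}}^{\hat{p}}u(\theta_{n_{k}})\ge\hat{C}_{2}(\phi_{\varepsilon}(w))^{\hat{\mu}}$, the hypothesis of the relevant implication of Lemma~\ref{lemma1.4} is met at every node of this grid, so it remains to treat the cases $\hat{\mu}=2$ and $\hat{\mu}<2$ separately.

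When $\hat{\mu}=2$, I apply (\ref{l1.4.23*}) inductively to get $u(\theta_{n_{k}})\le(1-\hat{t}/\hat{C}_{3})^{k}\,u(\theta_{n_{0}})$, with $1-\hat{t}/\hat{C}_{3}\in(0,1)$ since $\hat{t}\le1$ and $\hat{C}_{3}=8\hat{M}^{2}\max\{1,r\}\ge8$. Combining this geometric decay with the at-most-linear growth $\gamma_{n_{k}}\le\gamma_{n_{0}}+k\hat{t}$ yields $\gamma_{n_{k}}^{\hat{p}}u(\theta_{n_{k}})\le(\gamma_{n_{0}}+k\hat{t})^{\hat{p}}(1-\hat{t}/\hat{C}_{3})^{k}u(\theta_{n_{0}})\to0$ as $k\to\infty$, which contradicts $\gamma_{n_{k}}^{\hat{p}}u(\theta_{n_{k}})\ge\hat{C}_{2}(\phi_{\varepsilon}(w))^{\hat{\mu}}>0$. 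When $\hat{\mu}<2$, I apply (\ref{l1.4.25*}) inductively instead---its extra hypothesis $u(\theta_{n_{k+1}})>0$ being automatic, since $n_{k+1}\ge n_{0}$---obtaining $v(\theta_{n_{k}})\ge v(\theta_{n_{0}})+k\,(\hat{t}/\hat{C}_{3})(\phi_{\varepsilon}(w))^{-\hat{\mu}/\hat{p}}\ge k\,(\hat{t}/\hat{C}_{3})(\phi_{\varepsilon}(w))^{-\hat{\mu}/\hat{p}}$. Since $u(\theta_{n_{k}})>0$ we have $u(\theta_{n_{k}})=(v(\theta_{n_{k}}))^{-\hat{p}}\le\bigl(k(\hat{t}/\hat{C}_{3})(\phi_{\varepsilon}(w))^{-\hat{\mu}/\hat{p}}\bigr)^{-\hat{p}}$ for $k\ge1$, hence $\gamma_{n_{k}}^{\hat{p}}u(\theta_{n_{k}})\le\bigl(\hat{C}_{3}(\gamma_{n_{0}}+k\hat{t})/(k\hat{t})\bigr)^{\hat{p}}(\phi_{\varepsilon}(w))^{\hat{\mu}}$; letting $k\to\infty$ the prefactor tends to $\hat{C}_{3}^{\hat{p}}$, so $L\le\limsup_{k\to\infty}\gamma_{n_{k}}^{\hat{p}}u(\theta_{n_{k}})\le\hat{C}_{3}^{\hat{p}}(\phi_{\varepsilon}(w))^{\hat{\mu}}$.

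Finally I fix $\hat{C}_{5}$ and close the contradiction. Since $\hat{p}=\hat{\mu}\min\{r,\hat{r}\}\le\hat{\mu}r\le2r$ and $\hat{C}_{3}\ge1$, we have $\hat{C}_{3}^{\hat{p}}\le\hat{C}_{3}^{2r}$, an explicit function of $r,\hat{C},\hat{M}$; I take $\hat{C}_{5}=\hat{C}_{2}+\hat{C}_{3}^{2r}+1$. Then $L>\hat{C}_{5}(\phi(w))^{\hat{\mu}}$ gives $\hat{C}_{2}(\phi(w))^{\hat{\mu}}<L$ and $\hat{C}_{3}^{\hat{p}}(\phi(w))^{\hat{\mu}}<L$ with strict inequality, so by continuity of $\varepsilon\mapsto(\phi(w)+\varepsilon)^{\hat{\mu}}$ the number $\varepsilon$ chosen in the first paragraph can indeed be taken small enough that, in addition, $\hat{C}_{3}^{\hat{p}}(\phi_{\varepsilon}(w))^{\hat{\mu}}<L$; this contradicts the bound $L\le\hat{C}_{3}^{\hat{p}}(\phi_{\varepsilon}(w))^{\hat{\mu}}$ obtained above in the case $\hat{\mu}<2$, while in the case $\hat{\mu}=2$ the contradiction is already in hand. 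Hence no such $\omega$ exists and (\ref{l1.6.1*}) holds on $\Lambda\setminus N_{0}$. The step I expect to be the main obstacle is the $\hat{\mu}<2$ estimate: one must push the linear-in-$k$ growth of the singular Lyapunov value $v(\theta_{n_{k}})$ through the relation $u=v^{-\hat{p}}$, balance the resulting polynomial decay of $u(\theta_{n_{k}})$ against the polynomial growth of $\gamma_{n_{k}}$ so that the limiting constant is precisely $\hat{C}_{3}^{\hat{p}}(\phi_{\varepsilon}(w))^{\hat{\mu}}$, and ensure---using $\hat{p}\le2r$---that this constant is a deterministic function of $r,\hat{C},\hat{M}$, as the statement requires.
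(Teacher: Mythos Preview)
Your proposal is correct and follows essentially the same approach as the paper: argue by contradiction, iterate along the sparse grid $n_{k+1}=a(n_{k},\hat{t})$, and invoke the dichotomy of Lemma~\ref{lemma1.4} (geometric contraction of $u$ when $\hat{\mu}=2$, linear growth of $v$ when $\hat{\mu}<2$). The paper sets $\hat{C}_{5}=\hat{C}_{2}+\hat{C}_{3}^{2r}$ and, in the case $\hat{\mu}<2$, combines the \emph{upper} bound $v(\theta_{n_{k}})\le\hat{C}_{5}^{-1/(2r)}\gamma_{n_{k}}(\phi_{\varepsilon}(w))^{-\hat{\mu}/\hat{p}}$ (coming from the contradictory assumption) with the telescoped lower bound on $v(\theta_{n_{k}})-v(\theta_{n_{0}})$ to force $\hat{C}_{3}\ge\hat{C}_{5}^{1/(2r)}$; you instead bound $\gamma_{n_{k}}\le\gamma_{n_{0}}+k\hat{t}$ and pass to the limit to get $L\le\hat{C}_{3}^{\hat{p}}(\phi_{\varepsilon}(w))^{\hat{\mu}}$, then close via a slightly larger $\hat{C}_{5}$ and a careful choice of $\varepsilon$. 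These are arithmetically equivalent rearrangements of the same estimate, and both use $\hat{p}\le 2r$ to keep the constant a deterministic function of $r,\hat{C},\hat{M}$.
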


\begin{proof}
Let 
$\hat{C}_{5} = \hat{C}_{2}+\hat{C}_{3}^{2r}$. 
We prove (\ref{l1.6.1*}) by contradiction: 
Assume that (\ref{l1.6.1*}) is violated for some 
sample $\omega$ from $\Lambda\setminus N_{0}$
(notice that the formulas which follow in the proof correspond to 
this $\omega$). 
Consequently, there exist $\varepsilon \in (0,\infty )$ 
and 
$n_{0} > \tau_{\varepsilon }$ 
such that 
\begin{align} \label{l1.6.1}
	u(\theta_{n} ) 
	\geq 
	\hat{C}_{5} \gamma_{n}^{-\hat{p} } 
	(\phi_{\varepsilon }(w) )^{\hat{\mu} }
\end{align}
for $n\geq n_{0}$. 
Let $\{n_{k} \}_{k\geq 0}$ be defined recursively by 
$n_{k} = a(n_{k-1},\hat{t} )$ for $k\geq 1$. 	
In what follows in the proof, we consider separately 
the cases $\hat{\mu} < 2$
and $\hat{\mu} = 2$. 

{\em Case $\hat{\mu} < 2$:}
Due to (\ref{l1.6.1}), we have 
\begin{align*}
	&
	\begin{aligned}[b]
		v(\theta_{n_{k} } ) 
		\leq &
		\hat{C}_{5}^{-1/\hat{p} } 
		\gamma_{n_{k} }
		(\phi_{\varepsilon }(w) )^{-\hat{\mu} /\hat{p} }
		\leq &
		\hat{C}_{5}^{-1/(2r) } \gamma_{n_{k} }
		(\phi_{\varepsilon}(w) )^{-\hat{\mu}/\hat{p} }  
	\end{aligned}
\end{align*}
(notice that $\hat{p} \leq 2r$). 
On the other side, Lemma \ref{lemma1.4} 
(relations (\ref{l1.4.5*}), (\ref{l1.4.25*})) 
and (\ref{l1.6.1}) yield
\begin{align*}
	&
	\begin{aligned}[b]
		v(\theta_{n_{k+1} } ) - v(\theta_{n_{k} } ) 
		\geq 
		(\hat{t}/\hat{C}_{3} )
		(\phi_{\varepsilon }(w) )^{-\hat{\mu}/\hat{p} }
		\geq 
		(1/\hat{C}_{3} ) (\gamma_{n_{k+1} } - \gamma_{n_{k} } )
		(\phi_{\varepsilon }(w) )^{-\hat{\mu}/\hat{p} }
	\end{aligned}
\end{align*}
for $k\geq 0$
(notice that $\hat{C}_{5} \geq \hat{C}_{2}$;
also notice that $\hat{t} \geq \gamma_{n_{k+1} } - \gamma_{n_{k} }$).  
Therefore, 
\begin{align*}
	(1/\hat{C}_{3} ) (\gamma_{n_{k} }  - \gamma_{n_{0} } )
	(\phi_{\varepsilon }(w) )^{-\hat{\mu}/\hat{p} }
	\leq &
	\sum_{i=0}^{k-1} 
	(v(\theta_{n_{i+1} } ) - v(\theta_{n_{i} } ) )
	\\
	= &
	v(\theta_{n_{k} } ) - v(\theta_{n_{0} } ) 
	\\
	\leq &
	\hat{C}_{5}^{-1/(2r) } \gamma_{n_{k} }
	(\phi_{\varepsilon }(w) )^{-\hat{\mu}/\hat{p} }
\end{align*}
for $k\geq 1$. 
Thus, 
\begin{align*}
	(1 - \gamma_{n_{0} }/\gamma_{n_{k} } )
	\leq 
	\hat{C}_{3} \hat{C}_{5}^{-1/(2r) }
\end{align*}
for $k\geq 1$. 
However, this is impossible, since the limit process 
$k\rightarrow \infty$ (applied to the previous relation)  
yields 
$\hat{C}_{3} \geq \hat{C}_{5}^{1/(2r) }$
(notice that $\hat{C}_{5} > \hat{C}_{3}^{2r}$). 
Hence, (\ref{l1.6.1*}) holds on 
$\Lambda\setminus N_{0}$ when $\hat{\mu} < 2$. 

{\em Case $\hat{\mu} = 2$:} 
As a result of 
Lemma \ref{lemma1.4} (relations (\ref{l1.4.3*}), (\ref{l1.4.23*})) and (\ref{l1.6.1}), we get 
\begin{align*}
	u(\theta_{n_{k+1} } )
	\leq
	(1 - \hat{t}/\hat{C}_{3} ) u(\theta_{n_{k} } )
	\leq
	\left(
	1
	-
	(\gamma_{n_{k+1} } - \gamma_{n_{k} } )/ \hat{C}_{3}
	\right)
	u(\theta_{n_{k} } )
\end{align*}
for $k\geq 0$. 
Consequently, 
\begin{align*}
	u(\theta_{n_{k} } )
	\leq &
	u(\theta_{n_{0} } )
	\prod_{i=1}^{k} 
	\left(
	1 
	-  
	(\gamma_{n_{i} } - \gamma_{n_{i-1} } )/\hat{C}_{3} 	
	\right)
	\\
	\leq &
	u(\theta_{n_{0} } )	
	\exp\left(
	-
	(1/\hat{C}_{3} )
	\sum_{i=1}^{k} (\gamma_{n_{i} } - \gamma_{n_{i-1} } )	
	\right)
	\\
	= & 
	u(\theta_{n_{0} } )	
	\exp\left(
	- 
	(\gamma_{n_{k} } - \gamma_{n_{0} } )/\hat{C}_{3}	
	\right)
\end{align*}
for $k\geq 0$. 
Then, (\ref{l1.6.1}) yields 
\begin{align*}
	\hat{C}_{5} 
	(\phi_{\varepsilon}(w) )^{\hat{\mu} }
	\leq 
	u(\theta_{n_{0} } )
	\gamma_{n_{k} }^{\hat{p} } 
	\exp\left(
	-(\gamma_{n_{k} } - \gamma_{n_{0} } )/\hat{C}_{3} 
	\right)
\end{align*}
for $k\geq 0$. 
However, this is not possible, as the limit 
process $k\rightarrow \infty$
(applied to the previous relation) 
implies 
$\hat{C}_{5} (\phi_{\varepsilon}(w) )^{\hat{\mu} }
\leq 0$. 
Thus, (\ref{l1.6.1*}) holds on 
$\Lambda\setminus N_{0}$ also when $\hat{\mu} = 2$. 
\end{proof}

\begin{lemma} \label{lemma1.7}
Suppose that Assumptions \ref{a1.1} -- \ref{a1.3} hold. 
Then, 
there exists a random quantity  
$\hat{C}_{6}$ (which is a deterministic function of $r$, $\hat{C}$, $\hat{M}$)
such that $1\leq \hat{C}_{6} < \infty$ everywhere 
and such that 
\begin{align} \label{l1.7.1*}
	\limsup_{n\rightarrow \infty } 
	\gamma_{n}^{\hat{p} } \:
	u(\theta_{n} ) 
	\leq 
	\hat{C}_{6} 
	(\phi(w) )^{\hat{\mu} }	
\end{align}
on $\Lambda\setminus N_{0}$. 
\end{lemma}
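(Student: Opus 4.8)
The plan is to upgrade the $\liminf$ bound of Lemma~\ref{lemma1.6} to the $\limsup$ bound (\ref{l1.7.1*}) by showing that once $\gamma_n^{\hat p}u(\theta_n)$ is small it cannot subsequently become large. Fix $\varepsilon\in(0,\infty)$ and a sample $\omega\in\Lambda\setminus N_0$ (all formulas below refer to this $\omega$). Put $\hat C_8=\max\{\hat C_2,\hat C_3^{2r}\}$, $\tilde C_0=\max\{\hat C_5,\,4(\hat C_8+\hat C_1+(\hat C_4(\hat C_8+1))^{1/2})\}$ and $\hat C_6=4(\tilde C_0+\hat C_1+(\hat C_4(\tilde C_0+1))^{1/2})$; since $\hat C_1,\dots,\hat C_5$ and $\hat t$ are deterministic functions of $r,\hat C,\hat M$, so are $\hat C_8,\tilde C_0,\hat C_6$, and $\hat C_6\geq 1$.

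The core is a one-window propagation claim: there is an everywhere-finite integer random variable $\hat n_\varepsilon>\tau_\varepsilon$ such that for every $n\geq\hat n_\varepsilon$ the inequality $\gamma_n^{\hat p}u(\theta_n)\leq\tilde C_0(\phi_\varepsilon(w))^{\hat\mu}$ implies both $\gamma_{a(n,\hat t)}^{\hat p}u(\theta_{a(n,\hat t)})\leq\tilde C_0(\phi_\varepsilon(w))^{\hat\mu}$ and $\gamma_k^{\hat p}u(\theta_k)\leq\hat C_6(\phi_\varepsilon(w))^{\hat\mu}$ for all $n\leq k\leq a(n,\hat t)$. To prove it I distinguish two regimes. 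If $\gamma_n^{\hat p}u(\theta_n)\leq\hat C_8(\phi_\varepsilon(w))^{\hat\mu}$, then $\varphi(u(\theta_n))\leq\hat C_8\gamma_n^{-\hat p}(\phi_\varepsilon(w))^{\hat\mu}$, so Lemma~\ref{lemma1.5} bounds $\|\nabla f(\theta_n)\|^2$ by $\hat C_4(\hat C_8+1)\gamma_n^{-\hat p}(\phi_\varepsilon(w))^{\hat\mu}$; feeding this into (\ref{l1.2.1*}) and applying (\ref{l1.4.1'}) to the $\phi_\varepsilon(w)$ factor (the same kind of manipulation as in the proof of Lemma~\ref{lemma1.4}) gives $u(\theta_k)-u(\theta_n)\leq(\hat C_1+(\hat C_4(\hat C_8+1))^{1/2})\gamma_n^{-\hat p}(\phi_\varepsilon(w))^{\hat\mu}$ for $n\leq k\leq a(n,\hat t)$; since $\gamma_{a(n,\hat t)}/\gamma_n\to 1$, this yields $\gamma_k^{\hat p}u(\theta_k)\leq\tilde C_0(\phi_\varepsilon(w))^{\hat\mu}$ throughout the window for $n$ large, in particular at $k=a(n,\hat t)$. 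If instead $\hat C_8(\phi_\varepsilon(w))^{\hat\mu}<\gamma_n^{\hat p}u(\theta_n)\leq\tilde C_0(\phi_\varepsilon(w))^{\hat\mu}$, then $\gamma_n^{\hat p}u(\theta_n)\geq\hat C_2(\phi_\varepsilon(w))^{\hat\mu}$, so Lemma~\ref{lemma1.4} applies: when $\hat\mu=2$, (\ref{l1.4.23*}) gives $u(\theta_{a(n,\hat t)})\leq(1-\hat t/\hat C_3)u(\theta_n)$ and, since $(\gamma_{a(n,\hat t)}/\gamma_n)^{\hat p}(1-\hat t/\hat C_3)\leq 1$ for $n$ large, $\gamma_{a(n,\hat t)}^{\hat p}u(\theta_{a(n,\hat t)})\leq\gamma_n^{\hat p}u(\theta_n)$; when $\hat\mu<2$, either $u(\theta_{a(n,\hat t)})\leq 0$, in which case $\gamma_{a(n,\hat t)}^{\hat p}u(\theta_{a(n,\hat t)})\leq 0$ trivially, or (\ref{l1.4.25*}) applies, and rewriting $v=u^{-1/\hat p}$ and using $\gamma_{a(n,\hat t)}-\gamma_n\leq\hat t$ together with $\gamma_n^{\hat p}u(\theta_n)>\hat C_3^{\hat p}(\phi_\varepsilon(w))^{\hat\mu}$ gives $\gamma_{a(n,\hat t)}^{\hat p}u(\theta_{a(n,\hat t)})<\gamma_n^{\hat p}u(\theta_n)$. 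In both regimes $\gamma_{a(n,\hat t)}^{\hat p}u(\theta_{a(n,\hat t)})\leq\tilde C_0(\phi_\varepsilon(w))^{\hat\mu}$, and the interior bound with $\hat C_6$ follows from the Lemma~\ref{lemma1.5}/(\ref{l1.2.1*}) estimate above applied with $\tilde C_0$ in place of $\hat C_8$ (valid because $\gamma_n^{\hat p}u(\theta_n)\leq\tilde C_0(\phi_\varepsilon(w))^{\hat\mu}$ by hypothesis). Taking $\hat n_\varepsilon$ larger than $\tau_\varepsilon$ and than the deterministic thresholds hidden in the phrases ``for $n$ large'' establishes the claim.

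With the claim in hand the conclusion follows. By Lemma~\ref{lemma1.6}, $\liminf_{n\to\infty}\gamma_n^{\hat p}u(\theta_n)\leq\hat C_5(\phi(w))^{\hat\mu}<\tilde C_0(\phi_\varepsilon(w))^{\hat\mu}$ (since $\phi_\varepsilon(w)=\phi(w)+\varepsilon>\phi(w)\geq 0$ and $\tilde C_0\geq\hat C_5$), so there is $m\geq\hat n_\varepsilon$ with $\gamma_m^{\hat p}u(\theta_m)\leq\tilde C_0(\phi_\varepsilon(w))^{\hat\mu}$. Setting $m_0=m$ and $m_{j+1}=a(m_j,\hat t)$ for $j\geq 0$, (\ref{l1.2.3}) gives $\gamma_{m_{j+1}}-\gamma_{m_j}\geq 2\hat t/3$, so $m_j\to\infty$ and the intervals $\{[m_j,m_{j+1}]\}_{j\geq 0}$ cover all integers $\geq m$. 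Iterating the first part of the claim yields $\gamma_{m_j}^{\hat p}u(\theta_{m_j})\leq\tilde C_0(\phi_\varepsilon(w))^{\hat\mu}$ for all $j$, and its second part then yields $\gamma_n^{\hat p}u(\theta_n)\leq\hat C_6(\phi_\varepsilon(w))^{\hat\mu}$ for all $n\geq m$; hence $\limsup_{n\to\infty}\gamma_n^{\hat p}u(\theta_n)\leq\hat C_6(\phi_\varepsilon(w))^{\hat\mu}$ on $\Lambda\setminus N_0$, for every $\varepsilon>0$. Letting $\varepsilon\downarrow 0$, so that $\phi_\varepsilon(w)\downarrow\phi(w)$, gives (\ref{l1.7.1*}).

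The main obstacle is the one-window propagation claim, and within it the bookkeeping of constants: the threshold $\hat C_8$ must be chosen so that above it Lemma~\ref{lemma1.4} forces $\gamma_n^{\hat p}u(\theta_n)$ \emph{itself} (not merely $u(\theta_n)$, whose decrease competes against the growth of the factor $\gamma_n^{\hat p}$) to be non-increasing over a window, while below it the increase controlled by Lemmas~\ref{lemma1.2}--\ref{lemma1.5} must not carry $\gamma_n^{\hat p}u(\theta_n)$ past $\tilde C_0(\phi_\varepsilon(w))^{\hat\mu}$; this dictates the order $\hat C_8\to\tilde C_0\to\hat C_6$ in which the constants are defined. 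A minor point is the passage from $\phi_\varepsilon$ to $\phi$, which is immediate because $\hat C_6$ does not depend on $\varepsilon$ and $\phi_\varepsilon(w)\to\phi(w)$. All remaining estimates merely reuse computations already carried out in the proofs of Lemmas~\ref{lemma1.2}, \ref{lemma1.4} and \ref{lemma1.5}.
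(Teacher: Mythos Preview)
Your proposal is correct and uses the same ingredients as the paper (Lemmas~\ref{lemma1.2}, \ref{lemma1.4}, \ref{lemma1.5}, \ref{lemma1.6} and the relation (\ref{l1.4.1'})), but the organization differs. The paper argues by contradiction: it fixes a hypothetical first ``large'' time $n_0$ and the last preceding ``small'' time $m_0$ (relations (\ref{l1.7.1})--(\ref{l1.7.5'})), then uses (\ref{l1.2.1*})/(\ref{l1.5.3*}) to control the growth over one $\hat t$-window from $m_0$, derives from (\ref{l1.7.5}) that $u(\theta_{m_0})\geq \hat C_2\gamma_{m_0}^{-\hat p}(\phi_\varepsilon(w))^{\hat\mu}$, and finally invokes Lemma~\ref{lemma1.4} at $m_0$ (cases $\hat\mu<2$ and $\hat\mu=2$) to push $u(\theta_{a(m_0,\hat t)})$ back below the threshold, contradicting the minimality of $m_0$. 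You instead run a direct induction over consecutive $\hat t$-windows, splitting each window into a ``low'' regime (where the (\ref{l1.2.1*})/(\ref{l1.5.3*}) growth bound suffices) and a ``high'' regime (where the contraction of Lemma~\ref{lemma1.4} forces $\gamma_n^{\hat p}u(\theta_n)$ itself not to increase, via the same $v$-increment computation in the case $\hat\mu<2$ and the same multiplicative decay when $\hat\mu=2$). Your constant hierarchy $\hat C_8\to\tilde C_0\to\hat C_6$ plays the role of the paper's $\hat C_2\to\tilde C_2\to\hat C_6$. The direct version is arguably cleaner to read, while the paper's contradiction avoids having to state and prove the one-window claim separately; substantively the two arguments are equivalent.
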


\begin{proof}
Let 
$\tilde{C}_{1} = \hat{C}_{1} + \hat{C}_{4} + \hat{C}_{5}$, 
$\tilde{C}_{2} = 6 \tilde{C}_{1} \hat{C}_{2} + \hat{C}_{3}^{2r}$
and 
$\hat{C}_{6} = 2 (\tilde{C}_{1} + \tilde{C}_{2} )^{2}$. 
We use contradiction to show (\ref{l1.7.1*}): 
Suppose that (\ref{l1.7.1*}) is violated for some sample 
$\omega$ from $\Lambda\setminus N_{0}$
(notice that the formulas which appear in the proof correspond to 
this $\omega$).
Then, it can be deduced from Lemma \ref{lemma1.6} that 
there exist
$\varepsilon \in (0,\infty )$
and 
$n_{0} > m_{0} > \tau_{\varepsilon }$ such that 
\begin{align}
	& \label{l1.7.1}
	\gamma_{m_{0} }^{\hat{p} } u(\theta_{m_{0} } ) 
	\leq 
	\tilde{C}_{2} 
	(\phi_{\varepsilon }(w) )^{\hat{\mu} }, 
	\\
	& \label{l1.7.3}
	\gamma_{n_{0} }^{\hat{p} } u(\theta_{n_{0} } ) 
	\geq 
	\hat{C}_{6} 
	(\phi_{\varepsilon }(w) )^{\hat{\mu} }, 
	\\
	& \label{l1.7.5}
	\min_{m_{0}<n\leq n_{0} }
	\gamma_{n}^{\hat{p} } \: u(\theta_{n} ) 
	>
	\tilde{C}_{2}  
	(\phi_{\varepsilon }(w) )^{\hat{\mu} },   
	\\
	& \label{l1.7.5'}
	\max_{m_{0}\leq n < n_{0} }
	\gamma_{n}^{\hat{p} } \: u(\theta_{n} ) 
	<
	\hat{C}_{6} 
	(\phi_{\varepsilon }(w) )^{\hat{\mu} }  
\end{align}
(notice that $\tilde{C}_{2} > \tilde{C}_{1} > \hat{C}_{5}$) 
and such that 
\begin{align} 
	& \label{l1.7.7'}
	(\gamma_{a(m_{0},\hat{t} ) }/\gamma_{m_{0} } )^{\hat{p} }
	\leq
	\min\{
	2, (1 - \hat{t}/\hat{C}_{3} )^{-1} 
	\}, 
	\\
	&	\label{l1.7.7}
	\gamma_{m_{0} }^{-2\hat{p}/\hat{\mu} } 
	(\phi_{\varepsilon }(w) )^{2}
	\leq 
	\gamma_{m_{0} }^{-\hat{p} } 
	(\phi_{\varepsilon }(w) )^{\hat{\mu} }
\end{align}
(to see that (\ref{l1.7.7'}) holds for all, but finitely many $m_{0}$, 
notice that 
$\lim_{n\rightarrow \infty} \gamma_{a(n,\hat{t} ) }/\gamma_{n} = 1$; 
to conclude that (\ref{l1.7.7}) is true for all, but finitely many $m_{0}$, 
notice that 
$2\hat{p}/\hat{\mu} >  
\hat{p}$ if $\hat{\mu} < 2$
and
that the left and right-hand sides of (\ref{l1.7.7}) are 
equal when $\hat{\mu}=2$). 

Let $l_{0} = a(m_{0},\hat{t} )$. 
As a direct consequence of Lemmas \ref{lemma1.2}, \ref{lemma1.5} 
(relations (\ref{l1.2.1*}),(\ref{l1.5.3*})) and 
(\ref{l1.7.7}), we get
\begin{align} \label{l1.7.9}
	u(\theta_{n} )
	-
	u(\theta_{m_{0} } )
	\leq &
	\gamma_{m_{0} }^{-\hat{p}/\hat{\mu} } 
	\|\nabla f(\theta_{m_{0} } ) \| 
	\phi_{\varepsilon}(w) 
	+
	\hat{C}_{1} 
	\gamma_{m_{0} }^{-2\hat{p}/\hat{\mu} } 
	(\phi_{\varepsilon}(w) )^{2} 
	\nonumber\\
	\leq &
	\|\nabla f(\theta_{m_{0} } ) \|^{2}/2 
	+
	(\hat{C}_{1} + 1/2 )
	\gamma_{m_{0} }^{-2\hat{p}/\hat{\mu} }
	(\phi_{\varepsilon }(w) )^{2}
	\nonumber \\
	\leq &
	\hat{C}_{4} \: \varphi(u(\theta_{m_{0} } ) )
	+
	(\hat{C}_{1} + \hat{C}_{4} + 1 )
	\gamma_{m_{0} }^{-\hat{p} }
	(\phi_{\varepsilon }(w) )^{\hat{\mu} }
	\nonumber \\
	\leq &
	\tilde{C}_{1} 
	\left(
	\varphi(u(\theta_{m_{0} } ) )
	+
	\gamma_{m_{0} }^{-\hat{p} }
	(\phi_{\varepsilon }(w) )^{\hat{\mu} }
	\right) 
\end{align}
for $m_{0} \leq n \leq l_{0}$
(notice that 
$\hat{C}_{1} + \hat{C}_{4} + 1 < \tilde{C}_{1}$). 
Then, 
(\ref{l1.7.5}), (\ref{l1.7.7'}), (\ref{l1.7.9}) yield 
\begin{align} \label{l1.7.21'}
	u(\theta_{m_{0} } ) 
	+
	\tilde{C}_{1} 
	\varphi(u(\theta_{m_{0} } ) )
	\geq &
	u(\theta_{m_{0} + 1 } ) 
	-
	\tilde{C}_{1} 
	\gamma_{m_{0} }^{-\hat{p} }
	(\phi_{\varepsilon }(w) )^{\hat{\mu} }
	\nonumber \\
	\geq &
	(\tilde{C}_{2} 
	\gamma_{m_{0} + 1 }^{-\hat{p} }
	-
	\tilde{C}_{1} 
	\gamma_{m_{0} }^{-\hat{p} })
	(\phi_{\varepsilon }(w) )^{\hat{\mu} }
	\nonumber \\
	\geq &
	\left(
	\tilde{C}_{2} 
	(\gamma_{m_{0} + 1 }/\gamma_{m_{0} } )^{-\hat{p} } 
	-
	\tilde{C}_{1} 
	\right)
	\gamma_{m_{0} }^{-\hat{p} }
	(\phi_{\varepsilon }(w) )^{\hat{\mu} }
	\nonumber \\
	\geq &
	(\tilde{C}_{2}/2 - \tilde{C}_{1} )  
	\gamma_{m_{0} }^{-\hat{p} }
	(\phi_{\varepsilon }(w) )^{\hat{\mu} } 
	> 0 
\end{align}
(notice that 
$(\gamma_{m_{0}+1 } /\gamma_{m_{0} } )^{\hat{p} } \leq 
(\gamma_{l_{0} } /\gamma_{m_{0} } )^{\hat{p} } \leq 2$; 
also notice that $\tilde{C}_{2}/2 \geq 3 \tilde{C}_{1}$), 
while (\ref{l1.7.1}), (\ref{l1.7.7'}), (\ref{l1.7.9}) imply 
\begin{align} \label{l1.7.21}
	u(\theta_{n} )  
	\leq &
	(1 + \tilde{C}_{1} )
	u(\theta_{m_{0} } ) 
	+
	\tilde{C}_{1} 
	\gamma_{m_{0} }^{-\hat{p} }
	(\phi_{\varepsilon }(w) )^{\hat{\mu} }
	\nonumber \\
	\leq &
	(\tilde{C}_{1} + \tilde{C}_{2} + \tilde{C}_{1} \tilde{C}_{2} )
	\gamma_{m_{0} }^{-\hat{p} } (\phi_{\varepsilon }(w) )^{\hat{\mu} } 
	\nonumber \\
	< &
	(\hat{C}_{6}/2) 
	(\gamma_{n}/\gamma_{m_{0} } )^{\hat{p} }
	\gamma_{n}^{-\hat{p} } (\phi_{\varepsilon }(w) )^{\hat{\mu} } 
	\nonumber \\
	\leq  &
	\hat{C}_{6} 
	\gamma_{n}^{-\hat{p} } (\phi_{\varepsilon }(w) )^{\hat{\mu} } 
\end{align}
for $m_{0} \leq n \leq l_{0}$
(notice that 
$(\gamma_{n}/\gamma_{m_{0} } )^{\hat{p} } \leq 
(\gamma_{l_{0} }/\gamma_{m_{0} } )^{\hat{p} } \leq 2$
for $m_{0} \leq n \leq l_{0}$;
also notice that $\hat{C}_{6}/2 = (\tilde{C}_{1} + \tilde{C}_{2} )^{2} >
\tilde{C}_{1} + \tilde{C}_{2} + \tilde{C}_{1} \tilde{C}_{2}$).  
Due to  
(\ref{l1.7.3}), (\ref{l1.7.5'}), (\ref{l1.7.21}), we have 
$l_{0} < n_{0}$.
On the other side, 
as $x + \tilde{C}_{1} \varphi(x) \geq 0$ only if $x\geq 0$
and $x + \tilde{C}_{1} \varphi(x) = (1 + \tilde{C}_{1} ) x$ for $x\geq 0$, 
inequality (\ref{l1.7.21'}) implies   
\begin{align} \label{l1.7.23} 
	u(\theta_{m_{0} } )
	\geq &
	(1 + \tilde{C}_{1} )^{-1} (\tilde{C}_{2}/2 - \tilde{C}_{1} )
	\gamma_{m_{0} }^{-\hat{p} }
	(\phi_{\varepsilon }(w) )^{\hat{\mu} }
	\geq 
	\hat{C}_{2} 
	\gamma_{m_{0} }^{-\hat{p} }
	(\phi_{\varepsilon }(w) )^{\hat{\mu} } 
\end{align}
(notice that 
$\tilde{C}_{2}/2 - \tilde{C}_{1}
\geq \tilde{C}_{1} (3 \hat{C}_{2} - 1 )
\geq 2 \tilde{C}_{1} \hat{C}_{2} 
\geq (1+\tilde{C}_{1} ) \hat{C}_{2}$).

In what follows in the proof, we consider separately the cases 
$\hat{\mu} < 2$ and $\hat{\mu} = 2$. 

{\em Case $\hat{\mu} < 2$:}
Owing to Lemma \ref{lemma1.4} (relations (\ref{l1.4.5*}), (\ref{l1.4.25*})) and 
(\ref{l1.7.1}), 
(\ref{l1.7.23}), we have 
\begin{align*}
	v(\theta_{l_{0} } ) 
	\geq &
	v(\theta_{m_{0} } ) 
	+
	(\hat{t}/\hat{C}_{3} ) 
	(\phi_{\varepsilon }(w) )^{-\hat{\mu}/\hat{p} }
	\\
	\geq &
	\left(
	\tilde{C}_{2}^{-1/\hat{p} } 
	\gamma_{m_{0} } 
	+
	\hat{C}_{3}^{-1} 
	(\gamma_{l_{0} } - \gamma_{m_{0} } )
	\right)
	(\phi_{\varepsilon }(w) )^{-\hat{\mu}/\hat{p} }
	\\
	> &
	\min\{\hat{C}_{2}^{-1/\hat{p} }, \hat{C}_{3}^{-1} \} 
	\gamma_{l_{0} } (\phi_{\varepsilon}(w) )^{-\hat{\mu}/\hat{p} }
	\\
	= &
	\tilde{C}_{2}^{-1/\hat{p} } 
	\gamma_{l_{0} } 	
	(\phi_{\varepsilon }(w) )^{-\hat{\mu}/\hat{p} } 
\end{align*}
(notice that 
$\hat{t} \geq \gamma_{l_{0} } - \gamma_{m_{0} }$; 
also notice 
$\tilde{C}_{2}^{-1/\hat{p} } \leq 
\tilde{C}_{2}^{-1/(2r) } < 
\hat{C}_{3}^{-1}$).
Consequently, 
\begin{align*}
	u(\theta_{l_{0} } ) 
	=
	\left(
	v(\theta_{l_{0} } ) 
	\right)^{-\hat{p} }
	< 
	\tilde{C}_{2} 
	\gamma_{l_{0} }^{-\hat{p} }
	(\phi_{\varepsilon }(w) )^{\hat{\mu} }. 
\end{align*}
However, this directly contradicts 
(\ref{l1.7.5}) and the fact that 
$l_{0} < n_{0}$. 
Thus, (\ref{l1.7.1*}) holds 
when $\hat{\mu} < 2$. 

{\em Case $\hat{\mu} = 2$:}
Using Lemma \ref{lemma1.4} (relations (\ref{l1.4.3*}), (\ref{l1.4.23*})) and (\ref{l1.7.23}), 
we get
\begin{align*}
	u(\theta_{l_{0} } )
	\leq 
	\left(
	1
	- 
	\hat{t}/\hat{C}_{3} 
	\right)
	u(\theta_{{m}_{0} } ). 
\end{align*}
Then, (\ref{l1.7.1}), (\ref{l1.7.7'}) yield 
\begin{align*}
	u(\theta_{l_{0} } )
	\leq &
	\tilde{C}_{2} 
	(1 - \hat{t}/\hat{C}_{3} )
	(\gamma_{l_{0} }/\gamma_{m_{0} } )^{\hat{p} }
	\gamma_{l_{0} }^{-\hat{p} }
	(\phi_{\varepsilon}(w) )^{\hat{\mu} }
	\leq 
	\tilde{C}_{2} 
	\gamma_{l_{0} }^{-\hat{p} }
	(\phi_{\varepsilon}(w) )^{\hat{\mu} }. 
\end{align*}
However, this is impossible due to (\ref{l1.7.5})
and the fact that $l_{0} < n_{0}$. 
Hence, 
(\ref{l1.7.1*}) 
also in the case $\hat{\mu} = 2$. 
\end{proof}

\begin{vproof}{Theorems \ref{theorem1.1} and \ref{theorem1.2}}
{Theorem \ref{theorem1.1} is an immediate consequence of 
Lemmas \ref{lemma1.2}, \ref{lemma1.3}. 
To show Theorem \ref{theorem1.2}, we use the following notations: 
$\hat{K} = (\hat{C}_{2} + \hat{C}_{4} + \hat{C}_{6} )^{2}$, 
$\hat{L} = \hat{K} \hat{N}$. 
Then, Lemmas \ref{lemma1.4} and \ref{lemma1.6} imply 
\begin{align} \label{t1.1.1} 
	\limsup_{n\rightarrow \infty } 
	\gamma_{n}^{\hat{p} } |u(\theta_{n} ) |
	\leq 
	(\hat{C}_{2} + \hat{C}_{6} ) 
	(\phi(w) )^{\hat{\mu} }
\end{align}
on $\Lambda\setminus N_{0}$. 
On the other side, 
Lemma \ref{lemma1.4} and (\ref{t1.1.1}) yield
\begin{align} \label{t1.1.3}
	\limsup_{n\rightarrow \infty } 
	\gamma_{n}^{\hat{p} } \|\nabla f(\theta_{n} ) \|^{2}
	\leq &
	\hat{C}_{4} 
	(\phi(w) )^{\hat{\mu} }
	+
	\hat{C}_{4}
	\limsup_{n\rightarrow \infty } 
	\gamma_{n}^{\hat{p} } 
	\varphi(u(\theta_{n} ) )
	\nonumber \\
	\leq &
	(\hat{C}_{2} + \hat{C}_{4} + \hat{C}_{6} )^{2} 
	(\phi(w) )^{\hat{\mu} }
\end{align}
on $\Lambda\setminus N_{0}$. 
Combining (\ref{t1.1.1}), (\ref{t1.1.3}) with Assumption \ref{a1.4}, we get 
\begin{align} \label{t1.1.5}
	\limsup_{n\rightarrow \infty } 
	\gamma_{n}^{\hat{q} } d(\theta_{n}, S ) 
	\leq &
	\hat{N} 
	\limsup_{n\rightarrow \infty } 
	\left(
	\gamma_{n}^{\hat{p} } \|\nabla f(\theta_{n} ) \|^{2} 
	\right)^{\nu_{\hat{Q} }/2 }
	\nonumber \\
	\leq &
	\hat{N} (\hat{C}_{2} + \hat{C}_{4} + \hat{C}_{6} )^{2} 
	(\phi(w) )^{\hat{\nu} }
\end{align}
on $\Lambda\setminus N_{0}$. 
As a direct consequence of (\ref{t1.1.1}) -- (\ref{t1.1.5}), 
we have that (\ref{t1.1.1*}) -- (\ref{t1.1.5*}) are satisfied  
on $\Lambda\setminus N_{0}$. 
Hence, Theorem \ref{theorem1.2} holds, too.
}
\end{vproof}

\section{Proof of Theorem \ref{theorem2.1}} \label{section2*} 

The following notation is used in this section. 
For $\theta \in \mathbb{R}^{d_{\theta } }$, $\xi \in \mathbb{R}^{d_{\xi} }$, 
$E_{\theta, \xi }(\cdot )$ denotes 
$E(\cdot|\theta_{0}=\theta, \xi_{0}=\xi)$. 
Moreover, let 
\begin{align*} 
	& 
	w_{n} 
	=
	F(\theta_{n}, \xi_{n+1} ) - \nabla f(\theta_{n} ), 
	\\
	&
	w_{1,n} 
	=
	\tilde{F}(\theta_{n}, \xi_{n+1} ) - (\Pi\tilde{F} )(\theta_{n}, \xi_{n} ), 
	\nonumber \\
	&
	w_{2,n}
	=
	(\Pi\tilde{F} )(\theta_{n}, \xi_{n} ) - (\Pi\tilde{F} )(\theta_{n-1}, \xi_{n} ), 
	\nonumber \\
	&
	w_{3,n}
	=
	-(\Pi\tilde{F} )(\theta_{n}, \xi_{n+1} ) 
	\nonumber 
\end{align*}
for $n\geq 1$. 
Then, it is obvious that 
algorithm (\ref{2.1}) admits the form (\ref{1.1}), 
while Assumption \ref{a2.2} yields  
\begin{align} \label{2.3*}
	\sum_{i=n}^{k} 
	\alpha_{i} \gamma_{i}^{r} w_{i} 
	= &
	\sum_{i=n}^{k} 
	\alpha_{i} \gamma_{i}^{r} w_{1,i} 
	+
	\sum_{i=n}^{k} 
	\alpha_{i} \gamma_{i}^{r} w_{2,i} 
	-
	\sum_{i=n}^{k} 
	(\alpha_{i} \gamma_{i}^{r} - \alpha_{i+1} \gamma_{i+1}^{r} ) w_{3,i} 
	\nonumber \\
	&
	-
	\alpha_{k+1} \gamma_{k+1}^{r} w_{3,k} 
	+
	\alpha_{n} \gamma_{n}^{r} w_{3,n-1} 
\end{align}
for $1\leq n \leq k$. 

\begin{lemma} \label{lemma11.1}
Let Assumption \ref{a2.1} hold. 
Then, there exists a real number $s \in (0,1)$ such that 
$\sum_{n=0}^{\infty } \alpha_{n}^{1+s} \gamma_{n}^{r} < \infty$. 
\end{lemma}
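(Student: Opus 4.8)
The plan is to reorganise the series by grouping the indices into consecutive blocks on which the step sizes $\alpha_{n}$ are mutually comparable, and then to reduce the claim to a single application of H\"older's inequality.

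First I would use Assumption \ref{a2.1} to fix a constant $C\ge 1$ and an index $n_{1}$ with $|\alpha_{n+1}^{-1}-\alpha_{n}^{-1}|\le C$ and $\alpha_{n}\le(20C)^{-1}$ for all $n\ge n_{1}$; telescoping then gives $|\alpha_{k}^{-1}-\alpha_{m}^{-1}|\le C(k-m)$ for $n_{1}\le m\le k$. Set $\eta:=(10C)^{-1}$ and, for $j\ge 0$, let $m_{j}$ be the smallest index $n\ge n_{1}$ with $\gamma_{n}-\gamma_{n_{1}}\ge j\eta$ (finite since $\sum_{n}\alpha_{n}=\infty$), and put $W_{j}=\{m_{j},\dots,m_{j+1}-1\}$, $L_{j}=m_{j+1}-m_{j}$, $\beta_{j}=\alpha_{m_{j}}$. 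Because $\alpha_{n}<\eta/2$ for $n\ge n_{1}$, the block increment $\Delta_{j}=\sum_{n\in W_{j}}\alpha_{n}=\gamma_{m_{j+1}}-\gamma_{m_{j}}$ lies in $(\eta/2,3\eta/2)$, and $\gamma_{n}\in[\gamma_{n_{1}}+j\eta,\gamma_{n_{1}}+(j+1)\eta)$ for every $n\in W_{j}$.

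The crucial point is the comparability of the step sizes on each block. Since $\alpha_{n}^{-1}\ge\beta_{j}^{-1}-CL_{j}$ for all $n\in W_{j}$, the inequality $\Delta_{j}\ge L_{j}\,(\beta_{j}^{-1}+CL_{j})^{-1}$ together with $\Delta_{j}<3\eta/2$ and the choice of $\eta$ yields, after solving for $L_{j}$, that $CL_{j}<\tfrac14\beta_{j}^{-1}$; substituting this back into the two-sided Lipschitz bound gives $\tfrac45\beta_{j}\le\alpha_{n}\le\tfrac43\beta_{j}$ for every $n\in W_{j}$ and every large $j$. Hence $L_{j}$ is comparable to $(C\beta_{j})^{-1}$ and $\gamma_{n}$ is comparable to $j\eta$ on $W_{j}$. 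Summing the contributions block by block (the finitely many initial indices contributing only a finite amount to each series), one finds that $\sum_{n}\alpha_{n}^{2}\gamma_{n}^{2r}$ is comparable to $\sum_{j}\beta_{j}\,j^{2r}$ and that $\sum_{n}\alpha_{n}^{1+s}\gamma_{n}^{r}$ is at most a constant multiple of $\sum_{j}\beta_{j}^{\,s}\,j^{r}$, the constants depending on $C$ and $r$ only.

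It then remains to deduce $\sum_{j}\beta_{j}^{\,s}j^{r}<\infty$ for a suitable $s\in(0,1)$ from $\sum_{j}\beta_{j}j^{2r}<\infty$ (which follows from Assumption \ref{a2.1} and the previous paragraph). Writing $\beta_{j}^{\,s}j^{r}=(\beta_{j}j^{2r})^{s}\,j^{r(1-2s)}$ and applying H\"older's inequality with exponents $1/s$ and $1/(1-s)$ bounds the sum by $(\sum_{j}\beta_{j}j^{2r})^{s}\,(\sum_{j}j^{r(1-2s)/(1-s)})^{1-s}$, and the last series converges exactly when $s>(r+1)/(2r+1)$; since $(r+1)/(2r+1)<1$ for $r>0$, any $s\in((r+1)/(2r+1),1)$ works, which proves the lemma. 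I expect the main obstacle to be the block-comparability step: the mesh $\eta$ must be chosen small relative to $1/C$, and the bound on $L_{j}$ and the comparability of the $\alpha_{n}$ on $W_{j}$ must be disentangled so as not to be circular; some care is also needed to check that the initial blocks (where $\gamma_{n}$ may be small) contribute only finitely to both series.
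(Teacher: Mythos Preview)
Your block-decomposition argument is correct, but it is substantially more elaborate than the paper's proof, and it also consumes more of Assumption~\ref{a2.1} than is actually needed (you invoke the bounded-increment condition $|\alpha_{n+1}^{-1}-\alpha_n^{-1}|\le C$, which the paper does not touch here). The paper applies H\"older \emph{directly} to the original series: with $p=(2+2r)/(2+r)$, $q=(2+2r)/r$ and $s=1/p=(2+r)/(2+2r)$ one has the exact factorisation $\alpha_n^{1+s}\gamma_n^{r}=(\alpha_n^{2}\gamma_n^{2r})^{1/p}(\alpha_n/\gamma_n^{2})^{1/q}$, so H\"older bounds the sum by $\bigl(\sum_n\alpha_n^{2}\gamma_n^{2r}\bigr)^{1/p}\bigl(\sum_n\alpha_n/\gamma_n^{2}\bigr)^{1/q}$; the first factor is finite by assumption, and the second is finite because $\gamma_{n+1}/\gamma_n\to 1$ lets one compare $\sum_n\alpha_n/\gamma_n^{2}$ with $\int_{\gamma_1}^{\infty}t^{-2}\,dt$. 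Thus the paper sidesteps the block construction entirely. Your approach has the pleasant feature that after grouping, $L_j\beta_j\asymp 1$ reduces everything to a clean sequence $(\beta_j)$ indexed by $\gamma$-time, and your H\"older step even yields the same threshold (your range $s>(r+1)/(2r+1)$ contains the paper's explicit $s$), but the price is the comparability argument and the care about small $j$, neither of which is needed once one spots the pairing with $\alpha_n/\gamma_n^{2}$.
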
 

\begin{proof}
Let $p=(2+2r)/(2+r)$, 
$q=(2+2r)/r$, 
$s=(2+r)/(2+2r)$. 
Then, using the H\"{o}lder inequality, we get
\begin{align*}
	\sum_{n=0}^{\infty } 
	\alpha_{n}^{1+s} \gamma_{n}^{r} 
	=
	\sum_{n=1}^{\infty } 
	(\alpha_{n}^{2} \gamma_{n}^{2r} )^{1/p}
	\left(
	\frac{\alpha_{n} }{\gamma_{n}^{2} } 
	\right)^{1/q} 
	\leq
	\left(
	\sum_{n=1}^{\infty }
	\alpha_{n}^{2} \gamma_{n}^{2r} 
	\right)^{1/p}
	\left(
	\sum_{n=1}^{\infty } 
	\frac{\alpha_{n} }{\gamma_{n}^{2} } 
	\right)^{1/q}. 
\end{align*}
Since 
$\gamma_{n+1}/\gamma_{n} = 1 + \alpha_{n}/\gamma_{n} = O(1)$ 
for $n\rightarrow \infty$ and 
\begin{align*}
	\sum_{n=1}^{\infty } 
	\frac{\alpha_{n} }{\gamma_{n}^{2} }
	=
	\sum_{n=1}^{\infty } 
	\frac{\gamma_{n+1} - \gamma_{n} }{\gamma_{n}^{2} }
	\leq 
	\sum_{n=1}^{\infty } 
	\left(\frac{\gamma_{n+1} }{\gamma_{n} } \right)^{2}
	\int_{\gamma_{n} }^{\gamma_{n+1} } 
	\frac{dt}{t^{2} }
	\leq 
	\frac{1}{\gamma_{1} }
	\max_{n\geq 0} \left(\frac{\gamma_{n+1} }{\gamma_{n} } \right)^{2}, 
\end{align*}
it is obvious that 
$\sum_{n=0}^{\infty } \alpha_{n}^{1+s} \gamma_{n}^{r}$ converges. 
\end{proof} 

\begin{vproof}{Theorem \ref{theorem2.1}}
{Let $Q \subset \mathbb{R}^{d_{\theta } }$ be an arbitrary compact set, 
while $s\in (0,1)$ is a real number such that 
$\sum_{n=0}^{\infty} \alpha_{n}^{1+s} \gamma_{n}^{r} < \infty$.  
Obviously, it is sufficient to show that  
$\sum_{n=0}^{\infty } \alpha_{n} \gamma_{n}^{r} w_{n}$ 
converges w.p.1 on $\bigcap_{n=0}^{\infty} \{\theta_{n} \in Q \}$. 

Due to
Assumption \ref{a2.1}, we have 
\begin{align*}
	&
	\alpha_{n-1}^{s} \alpha_{n} \gamma_{n}^{r} 
	=
	\left(
	1 + \alpha_{n-1} (\alpha_{n}^{-1} - \alpha_{n-1}^{-1} )
	\right)^{s}
	\alpha_{n}^{1+s} \gamma_{n}^{r}
	=
	O(\alpha_{n}^{1+s} \gamma_{n}^{r} ), 
	\\
	&
	(\alpha_{n-1} - \alpha_{n} ) \gamma_{n}^{r}
	=
	(\alpha_{n}^{-1} - \alpha_{n-1}^{-1} ) 
	\left(
	1 + \alpha_{n-1} (\alpha_{n}^{-1} - \alpha_{n-1}^{-1} )
	\right)
	\alpha_{n}^{2} \gamma_{n}^{r} 
	=
	O(\alpha_{n}^{2} \gamma_{n}^{r} ), 	
	\\
	&
	\alpha_{n} (\gamma_{n+1}^{r} - \gamma_{n}^{r} )
	=
	\alpha_{n} \gamma_{n}^{r} 
	\left(
	(1 + \alpha_{n}/\gamma_{n} )^{r} - 1 
	\right)
	=
	\alpha_{n} \gamma_{n}^{r} 
	\left(
	r\alpha_{n}/\gamma_{n} + o(\alpha_{n}/\gamma_{n} ) 
	\right)
	=
	o(\alpha_{n}^{2} \gamma_{n}^{r} )
\end{align*}
as $n\rightarrow \infty$. 
Consequently, 
\begin{align}
	& \label{t2.1.1}
	\sum_{n=0}^{\infty } 
	\alpha_{n}^{s} \alpha_{n+1} \gamma_{n+1}^{r} 	
	< 
	\infty, 
	\\
	& \label{t2.1.3} 
	\sum_{n=0}^{\infty } 
	|\alpha_{n} \gamma_{n}^{r} - \alpha_{n+1} \gamma_{n+1}^{r} |
	\leq 
	\sum_{n=0}^{\infty } 
	\alpha_{n} |\gamma_{n}^{r} - \gamma_{n+1}^{r} |
	+
	\sum_{n=0}^{\infty } 
	|\alpha_{n} - \alpha_{n+1} | \gamma_{n+1}^{r} 
	< 
	\infty. 
\end{align}
On the other side, 
as a result of Assumption \ref{a2.3}, we get 
\begin{align*}
	E_{\theta,\xi }
	\left(
	\|w_{1,n} \|^{2}
	I_{ \{\tau_{Q} > n \} }
	\right)
	\leq &
	2 
	E_{\theta,\xi }
	\left(
	\varphi_{Q,s }^{2}(\xi_{n+1} ) 
	I_{ \{\tau_{Q} > n \} }
	\right)
	+
	2 
	E_{\theta,\xi }
	\left(
	\varphi_{Q,s }^{2}(\xi_{n} ) 
	I_{ \{\tau_{Q} > n-1 \} }
	\right), 
	\\
	E_{\theta,\xi }
	\left(
	\|w_{2,n} \|^{2}
	I_{ \{\tau_{Q} > n \} }
	\right)
	\leq &
	E_{\theta,\xi }
	\left(
	\varphi_{Q,s }(\xi_{n} )
	\|\theta_{n} - \theta_{n-1} \|^{s} 
	I_{ \{\tau_{Q} > n-1 \} }
	\right)
	\\
	\leq & 
	\alpha_{n-1}^{s} 
	E_{\theta,\xi }
	\left(
	\varphi_{Q,s }^{2}(\xi_{n} ) 
	I_{ \{\tau_{Q} > n-1 \} }
	\right), 
	\\
	E_{\theta,\xi }
	\left(
	\|w_{3,n} \|^{2}
	I_{ \{\tau_{Q} > n \} }
	\right)
	\leq &
	E_{\theta,\xi }
	\left(
	\varphi_{Q,s }^{2}(\xi_{n+1} )
	I_{ \{\tau_{Q} > n \} }
	\right)
\end{align*}
for all $\theta \in \mathbb{R}^{d_{\theta } }$, 
$\xi \in \mathbb{R}^{d_{\xi } }$, $n\geq 1$. 
Then, Assumption \ref{a2.1} and (\ref{t2.1.1}) yield 
\begin{align*}
	&
	E_{\theta,\xi }
	\left(
	\sum_{n=1}^{\infty } 
	\alpha_{n}^{2} \gamma_{n}^{2r} 
	\|w_{1,n} \|^{2}
	I_{ \{\tau_{Q} > n \} }
	\right)
	\leq 
	4
	\left(
	\sum_{n=1}^{\infty } 
	\alpha_{n}^{2} \gamma_{n}^{2r} 
	\right)
	\sup_{n\geq 0}
	E_{\theta,\xi }
	\left(
	\varphi_{Q,s }^{2}(\xi_{n} ) 
	I_{ \{\tau_{Q} \geq n \} }
	\right)
	< 
	\infty, 
	\\
	&
	E_{\theta,\xi }
	\left(
	\sum_{n=1}^{\infty } 
	\alpha_{n} \gamma_{n}^{r} 
	\|w_{2,n} \| 
	I_{ \{\tau_{Q} > n \} }
	\right)
	\leq 
	\left(
	\sum_{n=1}^{\infty } 
	\alpha_{n-1}^{s} \alpha_{n} \gamma_{n}^{r} 
	\right)
	\sup_{n\geq 0}
	E_{\theta,\xi }
	\left(
	\varphi_{Q,s }^{2}(\xi_{n} ) 
	I_{ \{\tau_{Q} \geq n \} }
	\right)
	<
	\infty 
\end{align*}
for any $\theta \in \mathbb{R}^{d_{\theta } }$, 
$\xi \in \mathbb{R}^{d_{\xi } }$, 
while (\ref{t2.1.3}) implies 
\begin{align*}
	&
	E_{\theta,\xi }
	\left(
	\sum_{n=1}^{\infty } 
	|\alpha_{n} \gamma_{n}^{r} - \alpha_{n+1} \gamma_{n+1}^{r} |
	\|w_{3,n} \| 
	I_{ \{\tau_{Q} > n \} }
	\right)
	\\
	& \;\;\; 
	\leq 
	\left(
	\sum_{n=1}^{\infty } 
	|\alpha_{n} \gamma_{n}^{r} - \alpha_{n+1} \gamma_{n+1}^{r} |
	\right)
	\sup_{n\geq 0}
	\left(
	E_{\theta,\xi }
	\left(
	\varphi_{Q,s }^{2}(\xi_{n} ) 
	I_{ \{\tau_{Q} \geq n \} }
	\right)
	\right)^{1/2} 
	<
	\infty, 
	\\
	&
	E_{\theta,\xi }
	\left(
	\sum_{n=1}^{\infty } 
	\alpha_{n+1}^{2} \gamma_{n+1}^{2r} 
	\|w_{3,n} \|^{2} 
	I_{ \{\tau_{Q} > n \} }
	\right)
	\\
	& \;\;\; 
	\leq 
	\left(
	\sum_{n=1}^{\infty } 
	\alpha_{n+1}^{2} \gamma_{n+1}^{2r} 
	\right)
	\sup_{n\geq 0}
	E_{\theta,\xi }
	\left(
	\varphi_{Q,s }^{2}(\xi_{n} ) 
	I_{ \{\tau_{Q} \geq n \} }
	\right)
	<
	\infty  
\end{align*}
for each $\theta \in \mathbb{R}^{d_{\theta } }$, 
$\xi \in \mathbb{R}^{d_{\xi } }$.  
Since 
\begin{align*}
	E_{\theta,\xi }
	\left(
	w_{1,n} 
	I_{ \{\tau_{Q} > n \} }
	|
	{\cal F}_{n} 
	\right)
	=
	\left(
	E_{\theta,\xi }
	\left(
	\tilde{F}(\theta_{n}, \xi_{n+1} )
	|
	{\cal F}_{n} 
	\right)
	-
	(\Pi\tilde{F} )(\theta_{n},\xi_{n} )
	\right) 
	I_{ \{\tau_{Q} > n \} }
	=
	0
\end{align*}
w.p.1 for every $\theta\in \mathbb{R}^{d_{\theta } }$, 
$\xi \in \mathbb{R}^{d_{\xi } }$, $n\geq 1$,  
it can be deduced easily that 
series 
\begin{align*}
	\sum_{n=1}^{\infty } \alpha_{n} \gamma_{n}^{r} w_{1,n}, 
	\;\;\; 
	\sum_{n=1}^{\infty } \alpha_{n} \gamma_{n}^{r} w_{2,n}, 
	\;\;\; 
	\sum_{n=1}^{\infty } (\alpha_{n} \gamma_{n}^{r} - \alpha_{n+1} \gamma_{n+1}^{r} ) w_{3,n}
\end{align*}
converge w.p.1 
on $\bigcap_{n=0}^{\infty } \{\theta_{n} \in Q \}$, 
as well as that 
$
	\lim_{n\rightarrow \infty } \alpha_{n} \gamma_{n}^{r} w_{3,n-1} = 0
$	
w.p.1 on the same event. 
Owing to this and (\ref{2.3*}), 
we have that 
$\sum_{n=0}^{\infty } \alpha_{n} \gamma_{n}^{r} w_{n}$
converges w.p.1 on $\bigcap_{n=0}^{\infty } \{\theta_{n} \in Q \}$. 
}
\end{vproof}

\section{Proof of Theorems \ref{theorem4.1} and \ref{theorem4.2}} \label{section4*} 

In this section, we use the following notation. 
For 
$\theta \in \mathbb{R}^{d_{\theta } }$, $x \in \mathbb{R}^{d_{x} }$, $y \in \mathbb{R}$, 
$\xi = (x,y)$, 
let 
\begin{align*}
	F(\theta, \xi ) = (y - G_{\theta }(x) ) H_{\theta }(x), 
\end{align*}
while 
$\xi_{n} = (x_{n}, y_{n} )$ for $n\geq 0$. 
With this notation, it is obvious that algorithm (\ref{4.1})
admits the form (\ref{2.1}). 

\begin{vproof}{Theorem \ref{theorem4.1}}
{Let $\theta = [a'_{1} \cdots a'_{N_{1} } \; a''_{1,1} \cdots a''_{N_{1},N_{2} } ]^{T}
\in \mathbb{R}^{d_{\theta } }$, while 
\begin{align*}
	\delta_{\theta } 
	=
	\frac{\varepsilon}{2KL N_{1} N_{2} (1 + \|\theta \| ) }
\end{align*}
and
$\hat{U}_{\theta } = 
\{\eta \in \mathbb{C}^{d_{\theta } }: \|\eta - \theta \| < \delta_{\theta } \}$
($\varepsilon$ is specified in Assumption \ref{a4.1}).
Moreover, for 
$\eta = [b'_{1} \cdots b'_{N_{1} } \; b''_{1,1} \cdots b''_{N_{1},N_{2} } ]^{T} 
\in \mathbb{C}^{d_{\theta } }$, $x\in \mathbb{R}^{d_{x} }$, let 
\begin{align*}
	&
	\hat{G}_{\eta}(x)
	=
	\hat{\phi}_{1}\left(
	\sum_{i_{1}=1}^{N_{1} }
	b'_{i_{1} }
	\hat{\phi}_{2}\left(
	\sum_{i_{2}=1}^{N_{2} } 
	b''_{i_{1},i_{2} } \phi_{i_{2} }(x)
	\right)
	\right),
	\\
	&
	\hat{f}(\eta )
	=
	\frac{1}{2}
	\int (y - \hat{G}_{\eta}(x) )^{2} \pi(dx,dy).  
\end{align*}
Then, we have 
\begin{align*}
	\left|
	\sum_{i_{2}=1}^{N_{2} } b''_{i_{1},i_{2} } \psi_{i_{2} }(x) 
	- 
	\sum_{i_{2}=1}^{N_{2} } a''_{i_{1},i_{2} } \psi_{i_{2} }(x) 
	\right|
	\leq 
	\sum_{i_{2}=1}^{N_{2} } 
	|b''_{i_{1},i_{2} } - a''_{i_{1},i_{2} } |
	\: |\psi_{i_{2} }(x) |
	\leq 
	\delta_{\theta } L N_{2} 
	< 
	\varepsilon
\end{align*}
for all 
$\eta = [b'_{1} \cdots b'_{N_{1} } \; b''_{1,1} \cdots b''_{N_{1},N_{2} } ]^{T}
\in \hat{U}_{\theta }$, 
$1\leq i_{1} \leq N_{1}$
and each 
$x\in \mathbb{R}^{d_{x} }$
satisfying 
$\max_{1\leq k \leq N_{2} } |\psi_{k}(x) | \leq L$. 
Consequently, Assumption \ref{a4.1} implies
\begin{align*}
	&
	\left|
	\sum_{i_{1}=1}^{N_{1} }
	b'_{i_{1} } 
	\hat{\phi}_{2}\left(
	\sum_{i_{2}=1}^{N_{2} } b''_{i_{1},i_{2} } \psi_{i_{2} }(x) 
	\right)
	-
	\sum_{i_{1}=1}^{N_{1} }
	a'_{i_{1} } 
	\phi_{2}\left(
	\sum_{i_{2}=1}^{N_{2} } a''_{i_{1},i_{2} } \psi_{i_{2} }(x) 
	\right)
	\right|
	\\
	&
	\begin{aligned}[b]
	\leq &
	\sum_{i_{1}=1}^{N_{1} }
	|b'_{i_{1} } - a'_{i_{1} } |
	\left|
	\hat{\phi}_{2}\left(
	\sum_{i_{2}=1}^{N_{2} } b''_{i_{1},i_{2} } \psi_{i_{2} }(x) 
	\right)
	\right| 
	\\
	&
	+
	\sum_{i_{1}=1}^{N_{1} }
	|a'_{i_{1} } |
	\left|
	\hat{\phi}_{2}\left(
	\sum_{i_{2}=1}^{N_{2} } b''_{i_{1},i_{2} } \psi_{i_{2} }(x) 
	\right)
	-
	\hat{\phi}_{2}\left(
	\sum_{i_{2}=1}^{N_{2} } a''_{i_{1},i_{2} } \psi_{i_{2} }(x) 
	\right)
	\right|
	\end{aligned}
	\\
	&
	\leq 
	\delta_{\theta } K N_{1} 
	+
	K 
	\sum_{i_{1}=1}^{N_{1} }
	|a'_{i_{1} } |
	\left|
	\sum_{i_{2}=1}^{N_{2} } b''_{i_{1},i_{2} } \psi_{i_{2} }(x) 
	-
	\sum_{i_{2}=1}^{N_{2} } a''_{i_{1},i_{2} } \psi_{i_{2} }(x) 
	\right|
	\\
	&
	\leq 
	\delta_{\theta } K N_{1} 
	+
	\delta_{\theta } K L N_{1} N_{2} \|\theta \|
	< 
	\varepsilon
\end{align*}
for any 
$\eta = [b'_{1} \cdots b'_{N_{1} } \; b''_{1,1} \cdots b''_{N_{1},N_{2} } ]^{T}
\in \hat{U}_{\theta }$ 
and each
$x\in \mathbb{R}^{d_{x} }$
satisfying 
$\max_{1\leq k \leq N_{2} } |\psi_{k}(x) | \leq L$. 
Then, it can be deduced that 
for all $x\in \mathbb{R}^{d_{x} }$ satisfying 
$\max_{1\leq k \leq N_{2} } |\psi_{k}(x) | \leq L$, 
$\hat{G}_{\eta}(x)$ is analytical in $\eta$ on $\hat{U}_{\theta }$. 
Moreover, Assumption \ref{a4.1} yields
\begin{align*}
	&
	|\hat{G}_{\eta}(x) |
	\leq 
	K
	\left(
	1 
	+
	\sum_{i_{1}=1}^{N_{1} } 
	|b'_{i_{1} } |
	\left|
	\hat{\phi}_{2}\left(
	\sum_{i_{2}=1}^{N_{2} } 
	b''_{i_{1},i_{2}} \psi_{i_{2} }(x) 
	\right)
	\right|
	\right)
	\leq 
	K^{2} (1 + \|\eta \| ), 
	\\
	&
	\left|
	\frac{\partial }{\partial b'_{k_{1} } } 
	\hat{G}_{\eta}(x)
	\right|
	=
	\left|
	\hat{\phi}'_{1}\left(
	\sum_{i_{1}=1}^{N_{1} } 
	b'_{i_{1} } 
	\hat{\phi}_{2}\left(
	\sum_{i_{2}=1}^{N_{2} } b''_{i_{1},i_{2} } 
	\psi_{i_{2} }(x) 
	\right)
	\right)
	\hat{\phi}_{2}\left(
	\sum_{i_{2}=1}^{N_{2} } b''_{k_{1},i_{2} } 
	\psi_{i_{2} }(x) 
	\right)
	\right|
	\leq 
	K^{2}, 
	\\
	&
	\begin{aligned}[b]
	\left|
	\frac{\partial }{\partial b''_{k_{1},k_{2} } } 
	\hat{G}_{\eta}(x)
	\right|
	= 
	&
	\left|
	\hat{\phi}'_{1}\left(
	\sum_{i_{1}=1}^{N_{1} } 
	b'_{i_{1} } 
	\hat{\phi}_{2}\left(
	\sum_{i_{2}=1}^{N_{2} } b''_{i_{1},i_{2} } 
	\psi_{i_{2} }(x) 
	\right)
	\right)
	\right.
	\\
	&
	\left.
	\cdot
	\hat{\phi}'_{2}\left(
	\sum_{i_{2}=1}^{N_{2} } b''_{k_{1},i_{2} } 
	\psi_{i_{2} }(x) 
	\right)
	b'_{k_{1} } b''_{k_{1}, k_{2} } \psi_{k_{2} }(x)
	\right|
	\\
	\leq &
	K^{2} L \|\eta \|^{2}
	\end{aligned}
\end{align*} 
for all 
$\eta = [b'_{1} \cdots b'_{N_{1} } \; b''_{1,1} \cdots b''_{N_{1},N_{2} } ]^{T}
\in \hat{U}_{\theta }$, 
$1\leq k_{1} \leq N_{1}$, $1 \leq k_{2} \leq N_{2}$ 
and each
$x\in \mathbb{R}^{d_{x} }$
satisfying 
$\max_{1\leq k \leq N_{2} } |\psi_{k}(x) | \leq L$. 
Therefore, 
\begin{align*}
	\|\nabla_{\eta} \hat{G}_{\eta}(x) \|
	\leq 
	K^{2} L N_{1} N_{2} (1 + \|\eta \| )^{2}
\end{align*}
for any $\eta \in \hat{U}_{\theta }$
and each $x\in \mathbb{R}^{d_{x} }$
satisfying $\max_{1\leq k \leq N_{2} } \|\psi_{k}(x) | \leq L$. 
Thus, 
\begin{align*}
	\|\nabla_{\eta } (y - \hat{G}_{\eta}(x) )^{2} \|
	=
	2 
	|y - \hat{G}_{\eta}(x) | 
	\|\nabla_{\eta} \hat{G}_{\eta}(x) \|
	\leq 
	4 K^{4} L^{2} N_{1} N_{2} 
	(1 + \|\eta \| )^{3}
\end{align*}
for all $\eta \in \hat{U}_{\theta }$
and each $x\in \mathbb{R}^{d_{x} }$, $y\in \mathbb{R}$
satisfying $\max_{1\leq k \leq N_{2} } \|\psi_{k}(x) | \leq L$, 
$|y| \leq L$. 
Then, the dominated convergence theorem and Assumption \ref{a4.2} 
imply that 
$\hat{f}(\cdot )$ is differentiable on $\hat{U}_{\theta }$.
Consequently, $\hat{f}(\cdot )$ is analytical on $\hat{U}_{\theta }$. 
Since $f(\theta ) = \hat{f}(\theta )$ for all $\theta \in \mathbb{R}^{d_{\theta } }$, 
we conclude that $f(\cdot )$ is real-analytic on entire $\mathbb{R}^{d_{\theta } }$. 
}
\end{vproof}

\begin{vproof}{Theorem \ref{theorem4.2}}
{As $\{\xi_{n} \}_{n\geq 0}$
can be interpreted as a Markov chain whose 
transition kernel does not depend 
on $\{\theta_{n} \}_{n\geq 0}$, 
it is straightforward to show that Assumptions \ref{a2.2} 
and \ref{a2.3} hold. 
The theorem's assertion then follows directly from 
Theorem \ref{theorem2.1}. 
}
\end{vproof}

\section{Proof of Theorems \ref{theorem5.1} and \ref{theorem5.2}} \label{section5*} 

In this section, we rely on the following notation. 
For $n\geq 0$, let $\xi_{n+1} = (x_{n}, x_{n+1}, y_{n} )$, 
while 
\begin{align*}
	F(\theta, \xi ) 
	= 
	-(c(i) + \beta G_{\theta }(j) - G_{\theta }(i) ) y
\end{align*}
for $\theta, y \in \mathbb{R}^{d_{\theta } }$, $i,j \in {\cal X}$ 
and $\xi = (i,j,y)$. 
Moreover, let 
\begin{align*}
	\Pi_{\theta }((i,j,y), (i',j')\times B )
	= &
	P(\xi_{1} \in (i',j')\times B 
	| \xi_{0} = (i,j,y) )
	\\
	= &
	I_{B}(\beta y + H_{\theta }(j) ) P(x_{1}=j'|x_{0}=j) I_{j}(i')
\end{align*}
for $\theta, y \in \mathbb{R}^{d_{\theta } }$, 
$B \in {\cal B}^{d_{\theta } }$, 
$i,i',j,j' \in {\cal X}$. 
Then, it is straightforward to verify that recursion  
(\ref{5.1}), (\ref{5.3}) admits the form of the algorithm 
studied in Section \ref{section2}. 

The following notation is also used in this section. 
$e$ is an $N$-dimensional column vector whose all components are one. 
For $1\leq i \leq N$,  
$e_{i} = [e_{i,1} \cdots e_{i,N} ]^{T}$ 
is an $N$-dimensional column vector such that 
$e_{i,i}=1$ and $e_{i,k}=0$ for $k\neq i$. 
$P$ and $\pi$
denote (respectively) the transition probability matrix 
and the invariant column probability vector of 
$\{x_{n} \}_{n\geq 0}$ 
(notice that 
$j,i$ entry of $P$ is 
$P(x_{1}=j|x_{0}=i)$).  
Furthermore
$c = [c(1) \cdots c(N) ]$
and 
$g = c \sum_{n=0}^{\infty } \beta^{n} P^{n}$, 
while 
$G_{\theta } = [G_{\theta }(1) \cdots G_{\theta }(N) ]$, 
$\tilde{G}_{\theta } = c + \beta G_{\theta } P - G_{\theta }$
and 
$H_{\theta } = [H_{\theta }(1) \cdots H_{\theta }(N) ]$
for $\theta \in \mathbb{R}^{d_{\theta } }$
(notice that $c$, $g$, $G_{\theta }$, $\tilde{G}_{\theta }$
are row vectors). 

\begin{lemma} \label{lemma5.1}
Let Assumption \ref{a5.1} and \ref{a5.2} hold. 
Then, there exists a real number $\varepsilon \in (0,1)$
and for any compact set $Q \subset \mathbb{R}^{d_{\theta } }$, 
there exists another real number $C_{Q} \in [1, \infty )$
such that 
\begin{align}
	&
	\|
	(\Pi^{n} F)(\theta, \xi )
	-
	\nabla f(\theta ) 
	\|
	\leq 
	C_{Q} n \varepsilon^{n} 
	(1 + \|y \| ), 
	\nonumber \\
	&
	\|
	\left(
	(\Pi^{n} F)(\theta', \xi )
	-
	\nabla f(\theta' ) 
	\right)
	-
	\left(
	(\Pi^{n} F)(\theta'', \xi )
	-
	\nabla f(\theta'' ) 
	\right)
	\|
	\leq 
	C_{Q} n \varepsilon^{n} \|\theta' - \theta'' \|
	(1 + \|y \| ), 
	\nonumber \\
	& \label{l5.1.1*} 
	E\left(
	\|y_{n} \|^{2} I_{ \{\tau_{\rho } \geq n \} }
	|\theta_{0}=\theta, \xi_{0}=\xi 
	\right)
	\leq 
	C_{Q} (1 + \|y \| )^{2} 
\end{align}
for all $\theta, \theta', \theta'' \in Q$, 
$y \in \mathbb{R}^{d_{\theta } }$, $i,j \in {\cal X}$ and 
$\xi = (i,j,y)$. 
\end{lemma}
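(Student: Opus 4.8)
\emph{Proof proposal.} The plan is to first put $f$, $\nabla f$ and $(\Pi_\theta^nF)(\theta,\xi)$ into explicit closed forms, and then to compare the last two by coupling the frozen‑$\theta$ chain against its stationary version. Since $\{x_n\}$ is geometrically ergodic and $\mathcal X$ is finite, there are a unique invariant probability $\pi$ and constants $C_0\in[1,\infty)$, $\varrho\in(0,1)$ with $\|\mathrm{Law}(x_k\mid x_0=j)-\pi\|_1\le C_0\varrho^k$ for all $j,k$; in particular $g=c\sum_{m\ge0}\beta^mP^m$ is well defined, $f(\theta)=\tfrac12\sum_i\pi_i(g(i)-G_\theta(i))^2$, and hence $\nabla f(\theta)=-\sum_i\pi_i(g(i)-G_\theta(i))H_\theta(i)$, a finite sum of analytic functions. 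Using the Bellman identity $g=c+\beta gP$ and telescoping, $g(i)-G_\theta(i)=E\big[\sum_{m\ge0}\beta^md_m\mid x_0=i\big]$ with $d_m=c(x_m)+\beta G_\theta(x_{m+1})-G_\theta(x_m)$; pulling out $H_\theta(x_0)$ (which is $\sigma(x_0)$‑measurable) and using stationarity gives $\nabla f(\theta)=-\sum_{m\ge0}\beta^mE_\pi[d_mH_\theta(x_0)]$. On the other hand, iterating the kernel $\Pi_\theta$ from $\xi=(i,j,y)$, the resulting chain occupies states $(b_{m-1},b_m,\eta_m)$, where $\{b_m\}_{m\ge0}$ is the Markov chain on $\mathcal X$ with $b_0=j$ and transition $P$ (the $x$‑component of $\Pi_\theta$ does not depend on $\theta$, and the first coordinate $i$ drops out after one step) and $\eta_m=\beta^my+\sum_{l=0}^{m-1}\beta^{m-1-l}H_\theta(b_l)$; consequently
\begin{align*}
	(\Pi_\theta^nF)(\theta,\xi)=-E\big[\,(c(b_{n-1})+\beta G_\theta(b_n)-G_\theta(b_{n-1}))\,\eta_n\ \big|\ b_0=j\,\big].
\end{align*}
Reindexing by stationarity shows this converges as $n\to\infty$ to $-\sum_{m\ge0}\beta^mE_\pi[d_mH_\theta(x_0)]=\nabla f(\theta)$, the structural fact behind the first two bounds.

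For the convergence bound, split $\eta_n=\beta^ny+\sum_{k=0}^{n-1}\beta^{n-1-k}H_\theta(b_k)$ and decompose $(\Pi_\theta^nF)(\theta,\xi)-\nabla f(\theta)$ into: (i) the $\beta^ny$ contribution, bounded by $C_Q\beta^n\|y\|$ since the temporal‑difference factor is bounded on $Q$; (ii) the stationary tail $\sum_{k\le-1}\beta^{n-1-k}(\cdots)$, bounded by $C_Q\beta^n/(1-\beta)$; and (iii) the main sum $\sum_{k=0}^{n-1}\beta^{n-1-k}\Delta_k(\theta)$, where $\Delta_k(\theta)$ is the difference of $E[d'_{n-1}H_\theta(b_k)\mid b_0=j]$ and the analogous stationary expectation, with $d'_{n-1}=c(b_{n-1})+\beta G_\theta(b_n)-G_\theta(b_{n-1})$. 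Conditioning on $b_k$ and using the Markov property, $\Delta_k(\theta)=\sum_\ell\big(\mathrm{Law}(b_k\mid b_0=j)(\ell)-\pi_\ell\big)H_\theta(\ell)e_k(\theta,\ell)$, where $e_k(\theta,\ell)=E[d'_{n-1}\mid b_k=\ell]$ is the same in both versions (same forward dynamics) and is bounded on $Q$ uniformly in $n,k,\ell$; hence $\|\Delta_k(\theta)\|\le C_Q\varrho^k$ and $\big\|\sum_{k=0}^{n-1}\beta^{n-1-k}\Delta_k(\theta)\big\|\le C_Q\sum_{k=0}^{n-1}\beta^{n-1-k}\varrho^k\le C_Q\,n\,\varepsilon^n$ with $\varepsilon=\max\{\beta,\varrho\}\in(0,1)$ (the convolution of two geometric sequences forces the extra factor $n$). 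Collecting the three pieces gives $\|(\Pi_\theta^nF)(\theta,\xi)-\nabla f(\theta)\|\le C_Q\,n\,\varepsilon^n(1+\|y\|)$ for $n\ge1$, while for $n=0$ the left side is just $\|F(\theta,\xi)-\nabla f(\theta)\|\le C_Q(1+\|y\|)$.

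The Lipschitz bound uses the same decomposition with $\theta$ differenced. Since $\{b_m\}$ does not depend on $\theta$, the $\theta$‑dependence sits entirely in the factors $d'_{n-1}(\theta)$, $H_\theta(\ell)$ and $e_k(\theta,\ell)$, each of which is bounded and Lipschitz in $\theta$ uniformly over the finite state space on the convex hull of $Q$, with Lipschitz constants $\sup\|H_\theta(i)\|$ and $\sup\|\nabla^2G_\theta(i)\|$ finite by analyticity of $G_\theta(i)$ and compactness; integrating along segments inside that convex hull gives the Lipschitz estimates on $Q$. Thus $\|\Delta_k(\theta')-\Delta_k(\theta'')\|\le C_Q\varrho^k\|\theta'-\theta''\|$, the $\beta^ny$ and tail terms acquire the same factor $\|\theta'-\theta''\|$, and summing reproduces $C_Q\,n\,\varepsilon^n\|\theta'-\theta''\|(1+\|y\|)$. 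The third bound is elementary: on $\{\tau_Q\ge n\}$ one has $\theta_0,\dots,\theta_{n-1}\in Q$, so each $\|H_{\theta_k}(x_{k+1})\|\le\sup_{\theta\in Q,\,i\in\mathcal X}\|H_\theta(i)\|=:C_Q<\infty$; unrolling the eligibility recursion then yields the deterministic bound $\|y_n\|\,I_{\{\tau_Q\ge n\}}\le\beta^n\|y\|+C_Q/(1-\beta)\le C_Q(1+\|y\|)$, and squaring and taking the conditional expectation finishes the proof.

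The main obstacle is not any single estimate but the two‑time‑scale bookkeeping in step (iii): the temporal‑difference factor lives near index $n$ while the eligibility summands live at indices $0,\dots,n-1$, so for the $k$‑th term one must simultaneously exploit that $b_k$ has relaxed towards $\pi$ (useful only for $k$ large) and that it carries the weight $\beta^{n-1-k}$ (small only for $k$ close to $n$); balancing these two geometric decays — together with establishing the limit identity $\lim_n(\Pi_\theta^nF)(\theta,\xi)=\nabla f(\theta)$ through the Bellman/telescoping computation — is where the real work, and the rate $n\varepsilon^n$ rather than $\varepsilon^n$, comes from.
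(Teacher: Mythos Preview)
Your proposal is correct and follows essentially the same route as the paper: both compute $(\Pi^nF)(\theta,\xi)$ explicitly, identify $\nabla f(\theta)$ inside it, and bound the difference by the same three pieces --- the $\beta^n y$ term, the geometric tail $\sum_{k\ge n}\beta^k(\cdots)$, and the convolution $\sum_{k=0}^{n-1}\beta^{n-1-k}\varrho^{k}$ that produces the factor $n\varepsilon^n$ --- with the Lipschitz bound obtained by differencing the same decomposition and the moment bound by unrolling the eligibility recursion. The only distinction is notational: the paper carries out the computation in matrix form (writing $(\Pi^nF)(\theta,\xi)=\nabla f(\theta)-\beta^n y\,\tilde G_\theta P^{n-1}e_j+H_\theta\,\mathrm{diag}(\tilde G_\theta\sum_{k\ge n}\beta^kP^k)\pi-\sum_{k=0}^{n-1}\beta^k H_\theta\,\mathrm{diag}(\tilde G_\theta P^k)(P^{n-k-1}-\pi e^T)e_j$), while you phrase the same identities probabilistically via conditioning on $b_k$ and coupling to the stationary chain.
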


\begin{proof}
Let $Q\subset \mathbb{R}^{d_{\theta } }$ be an arbitrary compact set, 
while $\varepsilon \in (0, 1 )$, $\tilde{C} \in [1,\infty )$
are real numbers such that 
$\varepsilon \geq \max\{1/2,\beta \}$, 
$\|P^{n} \| \leq \tilde{C}$ and 
\begin{align*}
	\|P^{n} - \pi e^{T} \|
	\leq 
	\tilde{C} \varepsilon^{n}
\end{align*}
for $n\geq 0$
(the existence of $\varepsilon, \tilde{C}$ is ensured by Assumption \ref{a5.1}). 
Moreover, 
$\tilde{C}_{1,Q} \in [1,\infty )$ denotes 
an upper bound of 
$\|G_{\theta } \|, \|\tilde{G}_{\theta } \|, \|H_{\theta } \|$ 
on 
$Q$, 
while $\tilde{C}_{2,Q} \in [1,\infty )$ is 
a Lipschitz constant of 
$G_{\theta }, \tilde{G}_{\theta }, H_{\theta }$ 
on the same set. 
Furthermore, 
$C_{Q} = 
6 \tilde{C}^{2} (\tilde{C}_{1,Q} + \tilde{C}_{2,Q} )^{2}
/(1- \varepsilon )^{2}$. 

It is straightforward to show 
$\nabla f(\theta ) = 
H_{\theta } \diag(G_{\theta } - g ) \pi$
and 
\begin{align*}
	(\Pi^{n} F)(\theta, \xi )
	= &	
	\begin{aligned}[t]
	-
	E\Bigg(
	&
	(c(x_{n} ) + \beta G_{\theta }(x_{n+1} ) - G_{\theta }(x_{n} ) )
	\\
	&
	\cdot
	\left. 
	\left(
	\beta^{n} y 
	+
	\sum_{k=0}^{n-1} \beta^{k} H_{\theta }(x_{n-k} ) 
	\right)
	\right| x_{1} = j 
	\Bigg)
	\end{aligned}
	\\
	= &
	-\beta^{n} y \tilde{G}_{\theta } P^{n-1} e_{j} 
	-
	\sum_{k=0}^{n-1} 
	\beta^{k} H_{\theta } 
	\diag(\tilde{G}_{\theta } P^{k} ) P^{n-k-1} e_{j} 
	\\
	= &
	\nabla f(\theta ) 
	-\beta^{n} y \tilde{G}_{\theta } P^{n-1} e_{j} 
	+
	H_{\theta } 
	\diag\left(
	\tilde{G}_{\theta } \sum_{k=n}^{\infty } \beta^{k} P^{k} 
	\right)
	\pi
	\\
	&
	-
	\sum_{k=0}^{n-1} 
	\beta^{k} H_{\theta } 
	\diag(\tilde{G}_{\theta } P^{k} )
	(P^{n-k-1} - \pi e^{T} ) e_{j} 
\end{align*}
for $\theta, y \in \mathbb{R}^{d_{\theta } }$, $i,j \in {\cal X}$
and 
$\xi = (i,j,y)$. 
Therefore, 
\begin{align*}
	&
	\|
	(\Pi^{n} F)(\theta, \xi )
	-
	\nabla f(\theta ) 
	\|
	\\
	&
	\;\;\; 
	\leq 
	\tilde{C} \tilde{C}_{1,Q} 
	\beta^{n} \|y\|
	+
	\tilde{C} \tilde{C}_{1,Q}^{2} 
	\sum_{k=n}^{\infty } \beta^{k} 
	+ 
	\tilde{C}^{2} \tilde{C}_{1,Q}^{2} 
	\sum_{k=0}^{n-1} \beta^{k} \varepsilon^{n-k-1} 
	\\
	&
	\;\;\; 
	\leq 
	C_{Q} n \varepsilon^{n} (1 + \|y \| )
\end{align*}
for all 
$\theta \in Q$, $y\in \mathbb{R}^{d_{\theta } }$, 
$i,j \in {\cal X}$, $n\geq 0$ and $\xi = (i,j,y)$. 
Moreover, 
\begin{align*}
	&
	\|
	\left(
	(\Pi^{n} F)(\theta', \xi )
	-
	\nabla f(\theta' ) 
	\right)
	-
	\left(
	(\Pi^{n} F)(\theta'', \xi )
	-
	\nabla f(\theta'' ) 
	\right)
	\|
	\\
	&
	\;\;\; 
	\begin{aligned}[b]
		\leq &
		\tilde{C} 
		\beta^{n} \|y\|
		\|\tilde{G}_{\theta' } - \tilde{G}_{\theta'' } \|
		+
		\tilde{C} \tilde{C}_{1,Q} 
		(\|\tilde{G}_{\theta' } - \tilde{G}_{\theta'' } \|
		\\
		&
		+
		\|H_{\theta' } - H_{\theta'' } \| )
		\sum_{k=n}^{\infty } \beta^{k} 
		+ 
		\tilde{C}^{2} \tilde{C}_{1,Q}
		(\|\tilde{G}_{\theta' } - \tilde{G}_{\theta'' } \|
		+
		\|H_{\theta' } - H_{\theta'' } \| )
		\sum_{k=0}^{n-1} \beta^{k} \varepsilon^{n-k-1} 
		\\
		\leq &
		C_{Q} n \varepsilon^{n} \|\theta' - \theta'' 
		\| (1 + \|y \| )
	\end{aligned}
\end{align*}
for any 
$\theta', \theta'' \in Q$, $y\in \mathbb{R}^{d_{\theta } }$, 
$i,j \in {\cal X}$, $n\geq 0$ and $\xi = (i,j,y)$. 
On the other side, we have
\begin{align*}
	\|y_{n+1} \| I_{\{\tau_{Q} \geq n+1 \} }
	\leq 
	\beta \|y_{n} \| I_{\{\tau_{Q} \geq n \} }
	+
	\tilde{C}_{1,Q}
\end{align*}
for $n\geq 0$. 
Consequently, 
\begin{align*}
	\beta \|y_{n} \| I_{\{\tau_{Q} \geq n \} }
	\leq 
	\|y_{0} \| 
	+
	\tilde{C}_{1,Q} \sum_{k=0}^{n-1} \beta^{k}
	\leq 
	C_{Q}^{1/2} (1 + \|y_{0} \| )
\end{align*}
for $n\geq 0$, 
wherefrom (\ref{l5.1.1*}) immediately follows. 
\end{proof}

\begin{vproof}{Theorem \ref{theorem5.1}}
{Since 
\begin{align*}
	f(\theta )
	=
	\frac{1}{2}
	\sum_{i=1}^{N_{x} } 
	(g(i) - G_{\theta }(i) )^{2} 
	\pi(i)
\end{align*}
for each $\theta \in \mathbb{R}^{d_{\theta } }$
($\pi(i)$ is the $i$-th component of $\pi$), 
Assumption \ref{a5.2} implies that $f(\cdot )$ is analytic 
on entire $\mathbb{R}^{d_{\theta } }$. 
}
\end{vproof}

\begin{vproof}{Theorem \ref{theorem5.2}}
{Using Lemma \ref{lemma5.1}, it can be concluded easily that 
Assumption \ref{a2.2} and \ref{a2.3} hold. 
Then, the theorem's assertion directly follows from Theorem 
\ref{theorem2.1}. 
}
\end{vproof}

\section{Proof of Theorems \ref{theorem6.1} and \ref{theorem6.2}} \label{section6*}

In this section, we use the following notation. 
For $n\geq 0$, let 
\begin{align*}
	z_{n} = [x_{n}^{T} \; y_{n} \cdots y_{n-M + 1} ]^{T}, 
	\;\;\;\;\;
	\xi_{n} = [z_{n}^{T} \; \varepsilon_{n} \; \psi_{n}^{T} \cdots \varepsilon_{n-N +1} \; \psi_{n-N +1}^{T} ]^{T}, 
\end{align*}
while $d_{\xi} = L + M + N (d_{\theta } + 1 )$. 
For $\theta \in \Theta$, 
let $\varepsilon_{0}^{\theta } = \cdots = \varepsilon_{-N+1} = 0$, 
$\psi_{0}^{\theta } = \cdots = \psi_{-N+1} = 0$, 
while 
$\{\varepsilon_{n}^{\theta } \}_{n\geq 0}$, 
$\{\psi_{n}^{\theta } \}_{n\geq 0}$ are defined by the following recursion: 
\begin{align*}
	&
	\phi_{n-1}^{\theta } 
	= 
	[y_{n-1} \cdots y_{n-M} \; \varepsilon_{n-1}^{\theta} 
	\cdots \varepsilon_{n-N}^{\theta } ]^{T}, 
	\\
	&
	\varepsilon_{n}^{\theta}
	=
	y_{n} - (\phi_{n-1}^{\theta } )^{T} \theta, 
	\\
	&
	\psi_{n}^{\theta }
	=
	\phi_{n-1}^{\theta }
	-
	[\psi_{n-1}^{\theta } \cdots \psi_{n-N}^{\theta} ] A_{0} \theta, 
	\\
	&
	\xi_{n}^{\theta } 
	=
	[z_{n}^{T} \; \varepsilon_{n}^{\theta } \; ( \psi_{n}^{\theta } )^{T} \cdots 
	\varepsilon_{n-N+1}^{\theta } \; ( \psi_{n-N+1}^{\theta } )^{T} ]^{T},
	\;\;\; n\geq 1. 
\end{align*}
Then, it is straightforward to verify that 
$\{\varepsilon_{n}^{\theta } \}_{n\geq 0}$ satisfies the recursion 
(\ref{6.1''}), as well as that 
$\psi_{n}^{\theta } = \nabla_{\theta } \varepsilon_{n}^{\theta }$
for $n\geq 0$. 
Moreover, it can be deduced easily 
that there exist
a matrix valued function $G_{\theta } : \Theta \rightarrow \mathbb{R}^{d_{\xi } \times d_{\xi } }$
and a matrix $H \in \mathbb{R}^{d_{\xi} \times L}$ with the following properties: 
\begin{romannum}
\item
$G_{\theta }$ is linear in $\theta$ and
its eigenvalues lie outside 
$\{z \in \mathbb{C}: |z| \leq 1 \}$ for each $\theta \in \Theta$. 
\item
Equations 
\begin{align*}
	&
	\xi_{n+1}^{\theta } = G_{\theta } \xi_{n}^{\theta } + H w_{n}, 
	\;\;\;\;\;
	\xi_{n+1} = G_{\theta_{n} } \xi_{n} + H w_{n}
\end{align*}
hold for all $\theta \in \Theta$, $n\geq 0$. 
\end{romannum}

The following notation is also used in this section. 
For $\theta \in \Theta$, 
$z \in \mathbb{R}^{L+M}$, 
$u_{1},\dots,u_{N} \in \mathbb{R}$, 
$v_{1},\dots,v_{N} \in \mathbb{R}^{d_{\theta } }$
and 
$\xi = [z^{T} \; u_{1} \; v_{1}^{T} \cdots u_{N} \; v_{N}^{T} ]^{T}$, 
let 
\begin{align*}
	F(\theta, \xi ) = v_{1} u_{1}, 
	\;\;\;\;\;
	\phi(\xi ) = u_{1}^{2}, 
\end{align*}	
while 
\begin{align*}
	\Pi_{\theta }(\xi,B)
	=
	E(I_{B}(G_{\theta } \xi + H w_{0} ))
\end{align*}
for a Borel-measurable set 
$B$ from $\mathbb{R}^{d_{\xi}}$. 
Then, it can be deduced easily that 
recursion (\ref{6.1}) -- (\ref{6.7}) admits the form of 
the algorithm considered in Section \ref{section2}. 
Furthermore, it can be shown that 
\begin{align}
	& \label{6.1*}
	(\Pi^{n} \phi)(\theta, 0 ) 
	= 
	E\big((\varepsilon_{n}^{\theta } )^{2} \big), 
	\\
	& \label{6.3*}
	(\Pi^{n} F)(\theta, 0 ) 
	= 
	E\big(\psi_{n}^{\theta } \varepsilon_{n}^{\theta } \big)
	=
	\nabla_{\theta }(\Pi^{n} \phi)(\theta, 0 ) 
\end{align}
for each $\theta \in \Theta$, $n\geq 0$. 

\begin{vproof}{Theorem \ref{theorem6.1}}
{Let 
$m=E(y_{0} )$ and 
$r_{k} = r_{-k} = {\rm Cov}(y_{0},y_{k} )$ for $k\geq 0$, 
while 
\begin{align*}
	\varphi(\omega )
	=
	\sum_{k=-\infty }^{\infty } r_{k} e^{-i\omega k }
\end{align*}
for $\omega \in [-\pi,\pi]$.
Moreover, 
for $\theta \in \Theta$, $z \in \mathbb{C}$, 
let $C_{\theta }(z) = A_{\theta }(z)/B_{\theta }(z)$, 
while 
\begin{align*}
	\alpha_{\theta}
	=
	1
	+
	\max_{\omega \in [-\pi,\pi] } |A_{\theta }(e^{i\omega } ) |, 
	\;\;\;\;\; 
	\beta_{\theta}
	=
	\min_{\omega \in [-\pi,\pi] } |B_{\theta }(e^{i\omega } ) |, 
	\;\;\;\;\; 
	\delta_{\theta }
	=
	\frac{\beta_{\theta } }{4d_{\theta } \alpha_{\theta } }. 
\end{align*}
Obviously, $1\leq \alpha_{\theta } < \infty$, 
$0<\beta_{\theta }, \delta_{\theta } < \infty$
(notice that the zeros of $B_{\theta }(\cdot )$ are outside 
$\{z\in \mathbb{C}: |z|\leq 1 \}$). 

As $\sum_{k=0}^{\infty } r_{k} < \infty$, 
$|\varphi(\cdot )|$ is uniformly bounded. 
Consequently, 
the spectral theory for stationary processes
(see e.g. \cite[Chapter 2]{caines})
yields 
\begin{align*}
	&
	\lim_{n\rightarrow \infty } E(\varepsilon_{n}^{\theta } ) 
	=
	C_{\theta }(1) m, 
	\\
	&
	\lim_{n\rightarrow \infty } 
	{\rm Cov}(\varepsilon_{n}^{\theta }, \varepsilon_{n+k}^{\theta } )
	=
	\frac{1}{2\pi} 
	\int_{-\pi}^{\pi} 
	|C_{\theta }(e^{i\omega } ) |^{2} \varphi(\omega ) e^{i\omega k } d\omega
\end{align*}
for all $\theta \in \Theta$, $k\geq 0$
(notice that 
$\varepsilon_{n}^{\theta } = C_{\theta }(q) y_{n}$
and the poles of $C_{\theta }(\cdot )$ are in 
$\{z\in \mathbb{C}: |z| > 1 \}$).  
Therefore, 
\begin{align} \label{t6.1.1}
	f(\theta ) 
	=
	\frac{1}{4\pi} 
	\int_{-\pi}^{\pi} 
	|C_{\theta }(e^{i\omega } ) |^{2} \varphi(\omega ) d\omega
	+
	|C_{\theta }(1) |^{2} \frac{m^{2} }{2} 
\end{align}
for any $\theta \in \Theta$. 
On the other side, it is straightforward to verify 
\begin{align*}
	&
	\frac{\partial}{\partial a_{k} } A_{\theta }(e^{i\omega } )
	=
	- e^{-i\omega k }, 
	\\
	&
	\frac{\partial^{2} }{\partial a_{k_{1} } \partial a_{k_{2} } } A_{\theta }(e^{i\omega } )
	=
	0, 
	\\
	&
	\begin{aligned}[b]
	\frac{\partial^{l_{1}+\cdots+l_{N} } }
	{\partial b_{1}^{l_{1} } \cdots \partial b_{N}^{l_{N} } }
	\left(
	\frac{1}{B_{\theta }(e^{i\omega } ) }
	\right)
	= &
	-
	(l_{1} + l_{2} + \cdots + l_{N} )! \:
	e^{-i\omega (l_{1} + 2l_{2} + \cdots + Nl_{N} ) }
	\\
	&
	\cdot
	\left(
	-\frac{1}{B_{\theta }(e^{i\omega } ) }
	\right)^{l_{1} + l_{2} + \cdots + l_{N} + 1}
	\end{aligned}
\end{align*}
for every $\theta =[a_{1} \cdots a_{M} \; b_{1} \cdots b_{N} ]^{T} \in \Theta$, $\omega \in [-\pi,\pi]$, 
$1\leq k,k_{1},k_{2} \leq M$,
$l_{1},\dots,l_{N} \geq 0$. 
Thus, 
\begin{align*}
	&
	\left|
	\frac{\partial^{k_{1} + \cdots + k_{M} + l_{1} + \cdots l_{N} } }
	{\partial a_{1}^{k_{1} } \cdots \partial a_{M}^{k_{M} } 
	\partial b_{1}^{l_{1} } \cdots \partial b_{N}^{l_{N} } }
	C_{\theta }(e^{i\omega } )
	\right|
	\\
	&
	= 
	\left|
	\frac{\partial^{k_{1} + \cdots + k_{M} } }
	{\partial a_{1}^{k_{1} } \cdots \partial a_{M}^{k_{M} } }
	A_{\theta }(e^{i\omega } )
	\right|
	\;
	\left|
	\frac{\partial^{l_{1} + \cdots l_{N} } }
	{\partial b_{1}^{l_{1} } \cdots \partial b_{N}^{l_{N} } }
	\left(\frac{1}{B_{\theta }(e^{i\omega } ) } \right)
	\right|
	\\
	&
	\leq 
	(l_{1} + \cdots l_{N} )! \:
	\alpha_{\theta } 
	(1/\beta_{\theta } )^{l_{1} + \cdots l_{N} + 1}
\end{align*}
for all $\theta =[a_{1} \cdots a_{M} \; b_{1} \cdots b_{N} ]^{T} \in \Theta$, $\omega \in [-\pi,\pi]$, 
$k_{1},\dots,k_{M} \geq 0$, $l_{1},\dots,l_{N} \geq 0$. 
Then, it can be deduced easily 
\begin{align*}
	|D_{\theta }^{k_{1},\dots,k_{d_{\theta } } } C_{\theta }(e^{i\omega } ) |
	\leq 
	(k_{1}+\cdots+k_{d_{\theta} } )!
	(\alpha_{\theta }/ \beta_{\theta } )^{k_{1}+\cdots+k_{d_{\theta} } + 1 }
\end{align*}
for all 
$\theta \in \Theta$, $\omega \in [-\pi,\pi]$, $k_{1},\dots,k_{d_{\theta} } \geq 0$
($D_{\theta }^{k_{1},\dots,k_{d_{\theta } } }$ denotes 
$\partial^{k_{1}+\cdots+k_{d_{\theta} } }/
\partial \vartheta_{1}^{k_{1} } \cdots \partial \vartheta_{d_{\theta } }^{k_{\theta} }$, 
where $\vartheta_{i}$ is the $i$-th component of $\theta$). 
Since 
\begin{align*}
	D_{\theta}^{k_{1},\dots,k_{d_{\theta } } } |C_{\theta }(e^{i\omega } ) |^{2}
	=	
	\sum_{j_{1}=0}^{k_{1} } \cdots \sum_{j_{d_{\theta } }=0}^{k_{d_{\theta } } } 
	&
	\binom{k_{1} }{j_{1} } \cdots \binom{k_{d_{\theta } } }{j_{d_{\theta } } } 
	D_{\theta }^{j_{1},\dots,j_{d_{\theta } } } C_{\theta }(e^{i\omega } ) 
	\\
	&
	\cdot
	D_{\theta}^{k_{1}-j_{1},\dots,k_{d_{\theta } } - j_{d_{\theta } } } C_{\theta }(e^{-i\omega } )
\end{align*}
for each $\theta \in \Theta$, $\omega \in [-\pi,\pi]$, 
$k_{1},\dots,k_{d_{\theta}} \geq 0$, 
we have 
\begin{align*}
	&
	\big|
	D_{\theta}^{k_{1},\dots,k_{d_{\theta } } } |C_{\theta }(e^{i\omega } ) |^{2}
	\big|
	\\
	&
	\begin{aligned}[b]
	\leq 
	(k_{1} + \cdots + k_{d_{\theta } } )!
	\left(
	\frac{\alpha_{\theta } }{\beta_{\theta } } 
	\right)^{k_{1} + \cdots + k_{d_{\theta } } + 2 } 
	\sum_{j_{1}=0}^{k_{1} } \cdots \sum_{j_{d_{\theta } }=0}^{k_{d_{\theta } } } 
	\frac{\binom{k_{1} }{j_{1} } \cdots \binom{k_{d_{\theta } } }{j_{d_{\theta } } } }
	{\binom{k_{1} + \cdots k_{d_{\theta } } }{j_{1} + \cdots j_{d_{\theta } } } }
	\end{aligned}
	\\
	&
	\leq 
	(k_{1} + \cdots + k_{d_{\theta } } )!
	\left(
	\frac{\alpha_{\theta } }{\beta_{\theta } } 
	\right)^{k_{1} + \cdots + k_{d_{\theta } } + 2 } 
	\sum_{j_{1}=0}^{k_{1} } \cdots \sum_{j_{d_{\theta } }=0}^{k_{d_{\theta } } } 
	\binom{k_{1} }{j_{1} } \cdots \binom{k_{d_{\theta } } }{j_{d_{\theta } } } 
	\\
	&
	\leq 
	(k_{1} + \cdots + k_{d_{\theta } } )!
	\left(
	\frac{2\alpha_{\theta } }{\beta_{\theta } } 
	\right)^{k_{1} + \cdots + k_{d_{\theta } } + 2 } 
\end{align*}
for any $\theta \in \Theta$, $\omega \in [-\pi,\pi]$, 
$k_{1},\dots,k_{d_{\theta}} \geq 0$.   
Consequently, the multinomial formula
(see \cite[Theorem 1.3.1]{krantz&parks}) implies 
\begin{align*}
	&
	\sum_{k_{1}=0}^{\infty } \cdots \sum_{k_{d_{\theta } }=0}^{\infty } 
	\frac{\big|
	D_{\theta}^{k_{1},\dots,k_{d_{\theta } } } |C_{\theta }(e^{i\omega } ) |^{2}
	\big|}
	{k_{1}! \cdots k_{d_{\theta} }! }
	\delta_{\theta }^{k_{1} + \cdots + k_{d_{\theta } } }
	\\
	&
	\leq 
	\left(
	\frac{2\alpha_{\theta } }{\beta_{\theta } } 
	\right)^{2} 
	\sum_{k_{1}=0}^{\infty } \cdots \sum_{k_{d_{\theta } }=0}^{\infty } 
	\frac{(k_{1} + \cdots + k_{d_{\theta } } )! }
	{k_{1}! \cdots k_{d_{\theta} }! }
	\left(
	\frac{2\alpha_{\theta } \delta_{\theta } }{\beta_{\theta } } 
	\right)^{k_{1} + \cdots + k_{d_{\theta } } } 
	\\
	&
	= 
	\left(
	\frac{2\alpha_{\theta } }{\beta_{\theta } } 
	\right)^{2} 
	\sum_{n=0}^{\infty }
	\;\: 
	\sum_{\stackrel{\scriptstyle 0\leq k_{1},\dots,k_{d_{\theta } } \leq n}
	{k_{1} + \cdots k_{d_{\theta } } = n } } 
	\frac{(k_{1} + \cdots + k_{d_{\theta } } )! }
	{k_{1}! \cdots k_{d_{\theta} }! }
	\left(
	\frac{2\alpha_{\theta } \delta_{\theta } }{\beta_{\theta } } 
	\right)^{k_{1} + \cdots + k_{d_{\theta } } } 
	\\
	&
	= 
	\left(
	\frac{2\alpha_{\theta } }{\beta_{\theta } } 
	\right)^{2} 
	\sum_{n=0}^{\infty }
	\left(
	\frac{2 d_{\theta } \alpha_{\theta } \delta_{\theta } }{\beta_{\theta } } 
	\right)^{n} 
	\\
	&
	= 
	\left(
	\frac{2\alpha_{\theta } }{\beta_{\theta } } 
	\right)^{2} 
	\sum_{n=0}^{\infty }
	\left(
	\frac{1}{2} 
	\right)^{n} 
	< \infty
\end{align*}
for every $\theta \in \Theta$, $\omega \in [-\pi,\pi]$. 
Then, the analyticity of $f(\cdot )$
directly follows from (\ref{t6.1.1})
and the fact that $|\varphi(\cdot )|$ is uniformly bounded
(also notice that $C_{\theta}(1)$ is analytic in $\theta$).
}
\end{vproof}

\begin{vproof}{Theorem \ref{theorem6.2}}
{It is straightforward to show 
\begin{align*}
	&
	\max\{\|F(\theta, \xi ) \|, \phi(\xi ) \} 
	\leq 
	\|\xi \|, 
	\\
	&
	\max\{\|F(\theta, \xi' ) - F(\theta, \xi'' ) \|, |\phi(\xi' ) - \phi(\xi'' ) | \}
	\leq 
	2\|\xi' - \xi'' \|(\|\xi' \| + \|\xi'' \| )
\end{align*}
for all $\theta \in \Theta$, $\xi,\xi',\xi'' \in \mathbb{R}^{d_{\xi} }$. 
Moreover, it can be deduced easily 
that for 
any compact set $Q \subset \mathbb{R}^{d_{\theta } }$, 
there exist real numbers 
$\delta_{1,Q} \in (0,1)$, 
$C_{1,Q} \in [1,\infty )$ such that 
$\|G_{\theta }^{n} \| \leq C_{1,Q} \delta_{1,Q}^{n}$
and 
\begin{align*}
	\|G_{\theta'} - G_{\theta''} \|
	\leq 
	C_{1,Q} \|\theta' - \theta'' \|
\end{align*}
for each $\theta, \theta', \theta'' \in Q$, $n\geq 0$. 
Then, the results of \cite[Section II.2.3]{benveniste}  
imply that 
there exist a locally Lipschitz continuous function 
$g:\Theta \rightarrow \mathbb{R}^{d_{\theta } }$ and 
a Borel-measurable function 
$\tilde{F}: \Theta \times \mathbb{R}^{d_{\xi} } \rightarrow \mathbb{R}^{d_{\theta } }$
such that 
\begin{align*}
	F(\theta, \xi )
	-
	g(\theta )
	=
	\tilde{F}(\theta, \xi ) - (\Pi \tilde{F} )(\theta, \xi )
\end{align*}
for every $\theta \in \Theta$, $\xi \in \mathbb{R}^{d_{\xi } }$. 
Due to the same results, 
there exists a locally Lipschitz continuous function 
$h: \Theta \rightarrow \mathbb{R}$
and for any compact set $Q \subset \mathbb{R}^{d_{\theta } }$, there exist 
real numbers $\delta_{2,Q} \in (0,1)$, $C_{2,Q} \in [1,\infty )$
such that 
\begin{align}
	& \label{t6.2.1}
	\max\{
	\|(\Pi^{n} F)(\theta, \xi ) - g(\theta ) \|, 
	|(\Pi^{n} \phi)(\theta, \xi ) - h(\theta ) |
	\}
	\leq 
	C_{2,Q} \delta_{2,Q}^{n} (1 + \|\xi\| )^{2}, 
	\\ \nonumber 
	&
	\max\{
	\|\tilde{F}(\theta, \xi ) \|, \|(\Pi \tilde{F} )(\theta, \xi ) \| 
	\}
	\leq 
	C_{2,Q} (1 + \|\xi \|)^{2}, 
	\\ \nonumber 
	&
	\|\tilde{F}(\theta', \xi ) - \tilde{F}(\theta'', \xi ) \|
	\leq 
	C_{2,Q} \|\theta' - \theta'' \|(1 + \|\xi \| )^{2}
\end{align}
for each 
$\theta, \theta', \theta'' \in Q$, $\xi, \xi', \xi'' \in \mathbb{R}^{d_{\xi } }$. 
Combining (\ref{6.1*}), (\ref{6.3*}), (\ref{t6.2.1})
with the dominated convergence theorem, 
we get 
$h(\cdot ) = f(\cdot )$, 
$g(\cdot ) = \nabla f(\cdot )$. 
On the other side, owing to the fact that 
$\{x_{n} \}_{n\geq 0}$ is a geometrically ergodic Markov chain,
we have that  
$\{y_{n} \}_{n\geq 0}$ admits a stationary regime for $n\rightarrow \infty$. 
Consequently, Theorem \ref{theorem6.1} implies that 
$f(\cdot )$ is analytic on $\Theta$. 
Then, the theorem's assertion directly follows from Theorem \ref{theorem2.1}. 
}
\end{vproof}

\Appendix\section*{}
In this section, we study certain aspects of Assumption \ref{a1.3}. 
More specifically, we show that Assumption \ref{a1.3} is true 
if its `local version', Assumption \ref{a1.3}$^{\prime}$ (below) holds. 
We also demonstrate that (Lojasiewicz coefficients) 
$\delta_{Q,a}$, $\mu_{Q,a}$ and $M_{Q,a}$
have `measurable versions'
for which 
$\hat{\delta}$, $\hat{\mu}$ and $\hat{M}$ 
(defined in Section \ref{section1}) are random variables 
in probability space $(\Omega, {\cal F}, P )$
(i.e., measurable with respect to ${\cal F}$). 
We study these aspects of 
Assumption \ref{a1.3} under the following condition: 

\begin{bassumption}{\ref{a1.3}$^{\prime}$} 
{There exists an open vicinity $U$ of $S$
with the following property: 
For any compact set $Q \subset U$ and any real number 
$a\in f(Q)$, 
there exist real numbers 
$\delta'_{Q,a} \in (0,1)$, $\mu'_{Q,a} \in (1,2]$, 
$M'_{Q,a} \in [1,\infty )$ such that 
\begin{align*}
	|f(\theta ) - a |
	\leq 
	M'_{Q,a} \|\nabla f(\theta ) \|^{\mu'_{Q,a} }
\end{align*}
for all $\theta \in Q$
satisfying $|f(\theta ) - a | \leq \delta'_{Q,a}$.
}
\end{bassumption}

Throughout this section, we rely on the following notation. 
$\varepsilon \in (0,1)$ is a fixed constant. 
For a compact set $Q \subset \mathbb{R}^{d_{\theta } }$, $a\in f(Q)$ 
and $\delta \in (0,1)$, let 
\begin{align*}
	\phi_{Q,a}(\delta )
	=
	\sup
	\left\{
	\frac{1}{2}, 
	\frac{\log\|\nabla f(\theta ) \| }{\log|f(\theta ) - a | }
	:
	\theta \in Q\setminus S, 
	0 < |f(\theta ) - a | \leq \delta 
	\right\}, 
\end{align*}
while 
\begin{align*}
	\delta_{Q,a}
	=
	\sup
	\left\{
	\varepsilon \, \delta
	:
	\delta \in (0,1), 
	\phi_{Q,a}(\delta ) < 1
	\right\}
\end{align*}
and 
$\mu_{Q,a} = 1/\phi_{Q,a}(\delta_{Q,a} )$, 
$M_{Q,a} = 1$. 

\begin{lemma} \label{lemma1a}
Let Assumption \ref{a1.3}$^{\prime}$ hold. 
Moreover, let $Q \subset \mathbb{R}^{d_{\theta } }$ be an arbitrary compact set, 
while $a\in f(Q)$ is an arbitrary real number. 
Then, 
$\delta_{Q,a}$, $\mu_{Q,a}$, $M_{Q,a}$ specified in this section 
satisfy all requirements of Assumption \ref{a1.3}. 
\end{lemma}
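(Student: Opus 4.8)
The plan is to verify, one at a time, the four requirements that Assumption~\ref{a1.3} imposes on the triple $\delta_{Q,a}$, $\mu_{Q,a}=1/\phi_{Q,a}(\delta_{Q,a})$, $M_{Q,a}=1$ defined in this section. Two are immediate: $M_{Q,a}=1\in[1,\infty)$, and since $\phi_{Q,a}(\delta)\ge 1/2$ is built into the definition, $\mu_{Q,a}=1/\phi_{Q,a}(\delta_{Q,a})\le 2$ once $\phi_{Q,a}(\delta_{Q,a})$ is a well-defined positive number. So the substance lies in (i) $\delta_{Q,a}\in(0,1)$; (ii) $\phi_{Q,a}(\delta_{Q,a})<1$, i.e.\ $\mu_{Q,a}>1$; (iii) $|f(\theta)-a|\le\|\nabla f(\theta)\|^{\mu_{Q,a}}$ for all $\theta\in Q$ with $|f(\theta)-a|\le\delta_{Q,a}$. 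Throughout I use that $\nabla f$ is continuous (so $S$ is closed and $S\cap Q$ and the sets below are compact), write $T_\delta=\{\theta\in Q\setminus S:0<|f(\theta)-a|\le\delta\}$ for the index set in $\phi_{Q,a}(\delta)$, and use that $\log|f(\theta)-a|<0$ whenever $0<|f(\theta)-a|<1$.

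For (i) I would show $\phi_{Q,a}(\delta)<1$ for all small $\delta>0$, which forces $\delta_{Q,a}\ge\varepsilon\delta>0$ (and $\delta_{Q,a}\le\varepsilon<1$ is clear). Split $Q=Q_1\cup Q_2$ with $Q_1=\{\theta\in Q:d(\theta,S)\le\rho_0/2\}$, where $\rho_0>0$ is chosen — via compactness of $Q$ and openness of $U\supseteq S$ — so that $\{\theta\in Q:d(\theta,S)<\rho_0\}\subseteq U$, and $Q_2=\{\theta\in Q:d(\theta,S)\ge\rho_0/2\}$. Then $Q_1\subseteq U$ is compact, $Q_2$ is compact with $Q_2\cap S=\emptyset$, so $m:=\inf_{Q_2}\|\nabla f\|>0$. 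For $\theta\in T_\delta\cap Q_2$ one gets $\log\|\nabla f(\theta)\|/\log|f(\theta)-a|\le\max\{0,\log m/\log\delta\}<1/2$ once $\delta$ is small. For $Q_1$: if $a\notin f(Q_1)$ then $T_\delta\cap Q_1=\emptyset$ for $\delta<d(a,f(Q_1))$; if $a\in f(Q_1)$, Assumption~\ref{a1.3}$^{\prime}$ applied to $Q_1$ at the value $a$ gives $\delta_1,\mu_1,M_1$ with $\|\nabla f(\theta)\|\ge(|f(\theta)-a|/M_1)^{1/\mu_1}$ on $T_\delta\cap Q_1$ for $\delta\le\delta_1$, whence $\log\|\nabla f(\theta)\|/\log|f(\theta)-a|\le\tfrac1{\mu_1}-\tfrac{\log M_1}{\mu_1\log\delta}\to\tfrac1{\mu_1}<1$ as $\delta\downarrow0$. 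Combining the estimates produces $\delta_*>0$ with $\phi_{Q,a}(\delta)<1$ on $(0,\delta_*]$. (If $S=\emptyset$ the split degenerates to $Q=Q_2$ and this is easier.)

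For (ii) and the easy part of (iii): since $\delta\mapsto T_\delta$ is increasing, $\phi_{Q,a}$ is non-decreasing, so with $\bar\delta:=\delta_{Q,a}/\varepsilon=\sup\{\delta\in(0,1):\phi_{Q,a}(\delta)<1\}$ one has $\phi_{Q,a}(\delta)<1$ for every $\delta<\bar\delta$, in particular $\phi_{Q,a}(\delta_{Q,a})=\phi_{Q,a}(\varepsilon\bar\delta)<1$ (as $\varepsilon<1$); together with $\phi_{Q,a}(\delta_{Q,a})\ge1/2$ this gives $\mu_{Q,a}\in(1,2]$. Then for $\theta\in Q$ with $|f(\theta)-a|\le\delta_{Q,a}$: if $f(\theta)=a$ the inequality is trivial, and if $0<|f(\theta)-a|$ with $\theta\notin S$ then $\theta\in T_{\delta_{Q,a}}$, so $\log\|\nabla f(\theta)\|/\log|f(\theta)-a|\le\phi_{Q,a}(\delta_{Q,a})=1/\mu_{Q,a}$; multiplying by $\log|f(\theta)-a|<0$ and exponentiating gives exactly $\|\nabla f(\theta)\|^{\mu_{Q,a}}\ge|f(\theta)-a|$.

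The hard part will be the remaining case of (iii): $\theta\in S$ with $0<|f(\theta)-a|\le\delta_{Q,a}$, where the right-hand side vanishes, so I must show no such $\theta$ exists, i.e.\ $\delta_{Q,a}<d_0$ where $d_0:=\inf\{|f(\theta)-a|:\theta\in S\cap Q,\ f(\theta)\ne a\}\in(0,\infty]$. First, $d_0>0$: a sequence $\theta_n\in S\cap Q$ with $f(\theta_n)\ne a$, $|f(\theta_n)-a|\to0$ would subconverge to $\theta^*\in S\cap Q$ with $f(\theta^*)=a$, and Assumption~\ref{a1.3}$^{\prime}$ on a small closed ball about $\theta^*$ inside $U$ (at value $a$) would force $f(\theta_n)=a$ for large $n$ — contradiction; this also makes $a$ isolated in $f(S\cap Q)$, so $f(S\cap Q)\setminus\{a\}$ is compact. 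If $d_0=\infty$ or $d_0\ge1$ we are done since $\delta_{Q,a}\le\varepsilon<1\le d_0$; otherwise $d_0$ is attained at some $\theta^*$ with $f(\theta^*)=:b$, $0<|b-a|=d_0<1$. I would then show $\phi_{Q,a}(\delta)=+\infty$ for every $\delta\in(d_0,1)$ by exhibiting non-stationary points $\vartheta_n$ with $f(\vartheta_n)\to b$ and $\nabla f(\vartheta_n)\to0$ — directly when $\theta^*$ is not interior to $S$, and, when it is, at a boundary point of the component of $\operatorname{int}S$ through $\theta^*$, using Assumption~\ref{a1.3}$^{\prime}$ at the value $b$ to certify that the approximating points are genuinely non-stationary — so that $\log\|\nabla f(\vartheta_n)\|/\log|f(\vartheta_n)-a|\to+\infty$; hence $\bar\delta\le d_0$ and $\delta_{Q,a}=\varepsilon\bar\delta<d_0$, as needed. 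The delicate point is precisely this construction: controlling the ratio near stationary points lying at a critical level $\ne a$ and, in particular, locating non-stationary points that approach such a level while remaining in $Q$. This is where the local hypothesis Assumption~\ref{a1.3}$^{\prime}$, invoked both at $a$ and at the nearby bad value $b$, does the essential work; the bookkeeping with $Q_1,Q_2$ and the monotonicity of $\phi_{Q,a}$ is routine by comparison.
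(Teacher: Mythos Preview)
Your treatment of (i), (ii) and the non-stationary subcase of (iii) is correct and parallels the paper's argument closely; your $Q_1/Q_2$ split is a repackaging of the paper's three cases ($Q\cap S=\emptyset$; $Q\cap S\neq\emptyset$ with $a\notin f(Q\cap S)$; $a\in f(Q\cap S)$), and your use of the monotonicity of $\phi_{Q,a}$ to pass from $\bar\delta$ to $\varepsilon\bar\delta$ is exactly what the paper relies on implicitly. You also deserve credit for isolating the residual case $\theta\in S\cap Q$ with $0<|f(\theta)-a|\le\delta_{Q,a}$: the paper does not address it at all---its final paragraph derives (\ref{a1.3.1}) only for $\theta\in Q\setminus S$ and then simply asserts it ``for all $\theta\in Q$''.

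However, your proposed fix for this case has a genuine gap. The argument hinges on producing non-stationary points $\vartheta_n\in Q$ with $\nabla f(\vartheta_n)\to 0$ and $f(\vartheta_n)\to b=f(\theta^*)$, so that they lie in $T_\delta$ for $\delta>d_0$ and drive $\phi_{Q,a}(\delta)$ to $+\infty$. But nothing in the hypotheses guarantees that $Q$ contains \emph{any} non-stationary points near $\theta^*$ (or near a boundary point of its component of $\operatorname{int}S$): $Q$ is an arbitrary compact set and may, for instance, be finite. Concretely, take $f(\theta)=\theta^4-\theta^2$ on $\mathbb R$, $Q=\{0,\theta_0\}$ with $\theta_0$ small and non-stationary, and $a=f(\theta_0)$. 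Then $Q\setminus S=\{\theta_0\}$ and $|f(\theta_0)-a|=0$, so $T_\delta=\emptyset$ for every $\delta$; hence $\phi_{Q,a}\equiv 1/2$ and $\delta_{Q,a}=\varepsilon$, while $d_0=|f(0)-a|=|a|$ can be made smaller than $\varepsilon$. The inequality (\ref{a1.3.1}) then genuinely fails at $\theta=0$. Thus, without additional structure on $Q$ (for example, that it be the closure of an open neighbourhood of a set---which is precisely how $\hat Q$ is used later in the paper), neither your route nor the paper's final ``Hence'' closes this case.
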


\begin{proof}
First, we show $\delta_{Q,a} > 0$. 
To do so, 
we consider separately the following cases: 

{\em Case $Q\cap S = \emptyset$:}
Let 
\begin{align*}
	\tilde{\delta}_{Q,a}
	=
	\inf\left\{
	\exp(-2|\log\|\nabla f(\theta ) \| \, | \, )
	: 
	\theta \in Q
	\right\}. 
\end{align*}
Obviously, 
$0 < \tilde{\delta}_{Q,a} < 1$
(notice that $\inf_{\theta \in Q} \|\nabla f(\theta ) \| > 0$). 
We also have 
\begin{align}\label{l1a.1}
	2|\log\|\nabla f(\theta ) \| \, | 
	\leq 
	\log(1/\tilde{\delta}_{Q,a} )
\end{align}
for all $\theta \in Q$. 
Consequently, 
\begin{align}\label{l1a.3}
	\left|
	\frac{\log\|\nabla f(\theta ) \| }{\log|f(\theta ) - a | }
	\right|
	\leq 
	\frac{|\log\|\nabla f(\theta ) \| \, | }{\log(1/\tilde{\delta}_{Q,a} ) }
	\leq 
	1/2
\end{align}
for any $\theta \in Q$ satisfying $0 < |f(\theta ) - a | \leq \tilde{\delta}_{Q,a}$. 
Thus, 
$\phi_{Q,a}(\delta ) \leq 1/2$ for each $\delta \in (0,\tilde{\delta }_{Q,a} ]$, 
and hence, 
$\delta_{Q,a} \geq \varepsilon \tilde{\delta }_{Q,a} > 0$. 

{\em Case $Q\cap S \neq \emptyset$, $a\not\in f(Q\cap S )$:}
Let 
\begin{align*}
	&
	\tilde{\delta}'_{Q,a}
	=
	\frac{1}{2}
	\inf\left\{
	1, |f(\theta ) - a |
	:
	\theta \in Q\cap S
	\right\}, 
	\\
	&
	\tilde{\delta}''_{Q,a} 
	=
	\inf\left\{
	\exp(-2|\log\|\nabla f(\theta ) \| \, | )
	: 
	\theta \in Q, 
	|f(\theta ) - a | \leq \tilde{\delta}'_{Q,a}
	\right\}, 
\end{align*}
while 
$\tilde{\delta}_{Q,a} = \min\{\tilde{\delta}'_{Q,a}, \tilde{\delta}''_{Q,a} \}$. 
Obviously, 
$0<\tilde{\delta}_{Q,a} \leq 1/2$ 
(notice that $0 < \tilde{\delta}'_{Q,a} \leq 1/2$
and that 
$\theta \not\in Q\cap S$ if $|f(\theta ) - a | \leq \tilde{\delta}'_{Q,a}$; 
also notice that 
$0 < \inf\{\|\nabla f(\theta ) \| : \theta \in Q, |f(\theta ) - a | \leq \tilde{\delta}'_{Q,a} \}$). 
Moreover, 
(\ref{l1a.1}) holds for all $\theta \in Q$ 
satisfying $0 < |f(\theta ) - a | \leq \tilde{\delta}_{Q,a}$. 
Then, (\ref{l1a.3}) is true for any $\theta \in Q$ 
fulfilling $0 < |f(\theta ) - a | \leq \tilde{\delta}_{Q,a}$. 
Hence, 
$\phi_{Q,a}(\delta ) \leq 1/2$
for all $\delta \in (0,\tilde{\delta}_{Q,a} ]$, 
and 
consequently, 
$\delta_{Q,a} \geq \varepsilon \tilde{\delta}_{Q,a} > 0$. 

{\em Case $Q\cap S \neq \emptyset$, $a\in f(Q\cap S )$:}
Let 
$\rho_{Q} = d(Q\cap S, U^{c} ) /2$
and 
$\tilde{Q} = \{\theta \in \mathbb{R}^{d_{\theta } }: d(\theta, Q\cap S ) \leq \rho_{Q} \}$, 
while 
$\tilde{\delta}'_{Q,a} = \delta'_{\tilde{Q},a }$, 
$\tilde{\mu}_{Q,a} = \mu'_{\tilde{Q},a }$, 
$\tilde{M}_{Q,a} = M'_{\tilde{Q},a }$
($\delta'_{\tilde{Q},a }$, 
$\mu'_{\tilde{Q},a }$, 
$M'_{\tilde{Q},a }$
are introduced in Assumption \ref{a1.3}$^{\prime}$). 
Moreover, let 
\begin{align*}
	\tilde{\delta}''_{Q,a}
	=
	\inf\left\{
	\frac{1}{2},
	\exp(-2 |\log\|\nabla f(\theta ) \| \, | )
	: 
	\theta \in Q\setminus \tilde{Q}
	\right\}
\end{align*}
and 
$\tilde{\delta}_{Q,a} = 
\min\left\{\tilde{\delta}'_{Q,a}, \tilde{\delta}''_{Q,a}, \tilde{M}_{Q,a}^{-2/(\tilde{\mu}_{Q,a} - 1 ) }
\right\}$. 
Obviously, 
$\tilde{Q} \subset U$
and 
$0 < \tilde{\delta}_{Q,a} \leq 1/2$. 
Moreover, 
(\ref{l1a.1}) is true for all $\theta \in Q\setminus \tilde{Q}$. 
Therefore, 
(\ref{l1a.3}) holds for 
all $\theta \in Q\setminus \tilde{Q}$
satisfying $0 < |f(\theta ) - a | \leq \tilde{\delta}_{Q,a}$. 
On the other side, Assumption \ref{a1.3}$^{\prime}$ implies 
\begin{align*}
	\log|f(\theta ) - a |
	\leq
	\log \tilde{M}_{Q,a} 
	+
	\tilde{\mu}_{Q,a} 
	\log\|\nabla f(\theta ) \|
\end{align*}
for all $\theta \in \tilde{Q}\setminus S$
satisfying 
$0 < |f(\theta ) - a | \leq \tilde{\delta}_{Q,a}$
(notice that $\tilde{\delta }_{Q,a} \leq \delta'_{\tilde{Q},a}$). 
Consequently, 
\begin{align}\label{l1a.5}
	\frac{\log\|\nabla f(\theta ) \| }{\log|f(\theta ) - a | }
	\leq &
	\frac{1}{\tilde{\mu}_{Q,a} }
	\left(
	1
	-
	\frac{\log\tilde{M}_{Q,a} }{\log|f(\theta ) - a | }
	\right)
	\nonumber\\
	\leq &
	\frac{1}{\tilde{\mu}_{Q,a} }
	\left(
	1
	+
	\frac{\log\tilde{M}_{Q,a} }{\log(1/\tilde{\delta}_{Q,a} ) }
	\right)
	\nonumber\\
	\leq & 
	\frac{\tilde{\mu}_{Q,a} + 1}{2 \tilde{\mu}_{Q,a} }
	< 
	1
\end{align}
for all $\theta \in \tilde{Q}\setminus S$ 
satisfying $0 < |f(\theta ) - a | \leq \tilde{\delta}_{Q,a}$
(notice that 
$\log(1/\tilde{\delta}_{Q,a} ) \geq 2 \log \tilde{M}_{Q,a} / (\tilde{\mu}_{Q,a} - 1 )$). 
Thus, as a result of (\ref{l1a.3}), (\ref{l1a.5}), 
we have $\phi_{Q,a}(\delta ) < 1$
for all $\delta \in (0, \tilde{\delta}_{Q,a} ]$, 
and consequently, 
$\delta_{Q,a} \geq \varepsilon \tilde{\delta}_{Q,a} > 0$. 

Now, we prove that 
$\delta_{Q,a}$, $\mu_{Q,a}$, $M_{Q,a}$
fulfill all other requirements of Assumption \ref{a1.3}. 
By the definition of $\phi_{Q,a}(\cdot )$ and $\delta_{Q,a}$, 
we have 
$0 < \delta_{Q,a} < 1$, 
$1/2 \leq \phi_{Q,a}(\delta_{Q,a} ) < 1$ and 
\begin{align*}
	\frac{\log\|\nabla f(\theta ) \| }{\log|f(\theta ) - a | }
	\leq 
	\phi_{Q,a}(\delta_{Q,a} ) 
\end{align*}
for all $\theta \in Q\setminus S$ satisfying 
$0 < |f(\theta ) - a | \leq \delta_{Q,a}$. 
Therefore, 
$1 < \mu_{Q,a} = 1/\phi_{Q,a}(\delta_{Q,a} ) \leq 2$
and 
\begin{align*}
	\mu_{Q,a} \log\|\nabla f(\theta ) \|
	=
	\frac{\log\|\nabla f(\theta ) \| }{\phi_{Q,a}(\delta_{Q,a} ) }
	\geq 
	\log|f(\theta ) - a |
\end{align*}
for each $\theta \in Q\setminus S$ 
fulfilling $0 < |f(\theta ) - a | \leq \delta_{Q,a}$. 
Hence, (\ref{a1.3.1}) holds for all $\theta \in Q$ 
satisfying $0 < |f(\theta ) - a | \leq \delta_{Q,a}$. 
\end{proof}

\begin{lemma} \label{lemma3a}
Let $\hat{\delta}$, $\hat{\mu}$, $\hat{M}$
be defined using (\ref{1.21}), (\ref{1.23}) 
and $\delta_{Q,a}$, $\mu_{Q,a}$, $M_{Q,a}$ specified in this section. 
Then, $\hat{\delta}$, $\hat{\mu}$, $\hat{M}$ are random variables in 
probability space $(\Omega, {\cal F}, P )$. 
\end{lemma}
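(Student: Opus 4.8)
The plan is to reduce the whole statement to one joint–measurability fact and then to remove the uncountable suprema in the definition of $\phi_{Q,a}(\cdot)$ by a countable decomposition together with a measurable–selection argument.

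\emph{Step 1 (reduction).} Since $M_{Q,a}\equiv 1$ we have $\hat M\equiv 1$, which is trivially measurable; on $\Lambda^{c}$ the quantities $\hat\delta,\hat\mu$ equal the constants $1,2$ prescribed by (\ref{1.23}), and $\Lambda\in{\cal F}$; hence it suffices to prove measurability of $\hat\delta$ and $\hat\mu$ on $\Lambda$. The map $\delta\mapsto\phi_{Q,a}(\delta)$ is nondecreasing, so $\{\delta\in(0,1):\phi_{Q,a}(\delta)<1\}$ is downward closed in $(0,1)$, and therefore its supremum coincides with its supremum over rational $\delta$; consequently $\hat\delta(\omega)=\varepsilon\sup\{\delta\in\mathbb{Q}\cap(0,1):\phi_{\hat Q(\omega),\hat f(\omega)}(\delta)<1\}$ and $\hat\mu(\omega)=1/\phi_{\hat Q(\omega),\hat f(\omega)}(\hat\delta(\omega))$ (on $\Lambda$ the set in the definition of $\hat\delta$ is nonempty, as $\hat f\in f(\hat A)\subseteq f(\hat Q)$ there and Lemma \ref{lemma1a} applies, though this is not needed for measurability). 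Thus everything follows once we show that $(\delta,\omega)\mapsto\phi_{\hat Q(\omega),\hat f(\omega)}(\delta)$ is jointly ${\cal B}((0,1))\otimes{\cal F}$-measurable on $(0,1)\times\Lambda$: then $\hat\delta$ is a countable supremum of measurable functions, and $\hat\mu$ is the composition of that joint map with $\omega\mapsto(\hat\delta(\omega),\omega)$.

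\emph{Step 2 (measurability of the data).} On $\Lambda$ one has $d(\theta,\hat A(\omega))=\liminf_{n\to\infty}\|\theta-\theta_{n}(\omega)\|$ for every $\theta$ (both inequalities follow from the boundedness of $\{\theta_{n}(\omega)\}$ on $\Lambda$ together with sequential compactness), so $(\theta,\omega)\mapsto d(\theta,\hat A(\omega))$ is jointly measurable, being a pointwise liminf of maps continuous in $\theta$ and measurable in $\omega$; hence the graph $\{(\theta,\omega)\in\mathbb{R}^{d_{\theta}}\times\Lambda:\theta\in\hat Q(\omega)\}=\{(\theta,\omega):d(\theta,\hat A(\omega))\leq\rho\}$ is jointly measurable. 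Also $\hat f=\liminf_{n}f(\theta_{n})$ is measurable and $f,\nabla f$ are continuous, so $(\theta,a)\mapsto g_{a}(\theta):=\log\|\nabla f(\theta)\|/\log|f(\theta)-a|$ is continuous wherever $\nabla f(\theta)\neq 0$ and $0<|f(\theta)-a|<1$; and for each $m$ the constraints $\theta\in\hat Q(\omega)$, $\|\nabla f(\theta)\|\geq 1/m$ and $1/m\leq|f(\theta)-\hat f(\omega)|\leq\delta$ cut out a jointly measurable subset of $(0,1)\times\Lambda\times\mathbb{R}^{d_{\theta}}$.

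\emph{Step 3 (removing the uncountable supremum; the main obstacle).} Write $D_{Q,a,\delta}=\{\theta\in Q\setminus S:0<|f(\theta)-a|\leq\delta\}=\bigcup_{m\geq 1}D^{m}_{Q,a,\delta}$, where $D^{m}_{Q,a,\delta}=Q\cap\{\|\nabla f\|\geq 1/m\}\cap\{1/m\leq|f-a|\leq\delta\}$. When $Q$ is compact --- which $\hat Q(\omega)$ is on $\Lambda$, since $\hat A(\omega)$ is then a nonempty compact set --- each $D^{m}_{Q,a,\delta}$ is compact and $g_{a}$ is continuous on it; hence $\phi_{Q,a}(\delta)=\max\{1/2,\sup_{m\geq 1}\Psi_{m}(Q,a,\delta)\}$ with $\Psi_{m}(Q,a,\delta)=\sup_{\theta\in D^{m}_{Q,a,\delta}}g_{a}(\theta)$ (and $\sup\emptyset:=1/2$). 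By Step 2, for each $(m,\delta)$ the map $\omega\mapsto D^{m}_{\hat Q(\omega),\hat f(\omega),\delta}$ is a compact-valued multifunction with ${\cal B}\otimes{\cal F}$-measurable graph, and $g_{\hat f(\omega)}$ is continuous in $\theta$; the von Neumann--Aumann projection theorem (applied, as usual, with the $P$-completion of ${\cal F}$) then gives a Castaing representation of this multifunction by countably many measurable selections $\sigma_{i}^{m}$, whence $\Psi_{m}(\hat Q(\omega),\hat f(\omega),\delta)=\sup_{i}g_{\hat f(\omega)}(\sigma_{i}^{m}(\delta,\omega))$ is jointly measurable in $(\delta,\omega)$; equivalently, $\{(\delta,\omega):\Psi_{m}>c\}$ is the projection of a jointly measurable set. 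Taking the countable supremum over $m$ and the maximum with $1/2$ shows $(\delta,\omega)\mapsto\phi_{\hat Q(\omega),\hat f(\omega)}(\delta)$ is jointly measurable, which by Step 1 yields the lemma. The delicate point is precisely Step 3 --- passing from graph-measurability of the random compact sets $D^{m}_{\hat Q,\hat f,\delta}$ to measurability of the supremum over them of the (itself random) continuous function $g_{\hat f}$; this is where the measurable-selection/projection argument is unavoidable, and also where one must be slightly careful about completeness of the underlying $\sigma$-algebra.
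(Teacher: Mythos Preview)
Your argument follows the same skeleton as the paper's: both reduce everything to the joint $(\delta,\omega)$-measurability of the auxiliary function $\hat\phi(\delta,\omega)=\phi_{\hat Q(\omega),\hat f(\omega)}(\delta)$, and then read off $\hat\delta$ and $\hat\mu$ from it (the paper writes the same formulas $\hat\delta=\sup\{\varepsilon\delta:\hat\phi(\delta)<1\}$ and $\hat\mu=1/\hat\phi(\hat\delta)$). The difference is that the paper simply \emph{asserts} that $\hat\phi$ is measurable --- it defines $\hat\Phi(\theta,\delta)$, observes that this is jointly measurable in $(\theta,\delta,\omega)$, and then declares the uncountable supremum over $\theta$ to be measurable as well, with no further justification. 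You, by contrast, actually supply an argument for that step: the compact exhaustion $D=\bigcup_m D^m$, continuity of $g_a$ on each $D^m$, graph-measurability of the random compact sets $D^m_{\hat Q,\hat f,\delta}$, and a Castaing/projection argument to turn the supremum into a countable one. This is exactly the point the paper glosses over, and your treatment is a correct way to make it rigorous. Your honest caveat about needing the $P$-completion of ${\cal F}$ for the projection theorem is appropriate; the paper's proof makes no mention of this issue, and without some such device (or an alternative direct verification of weak measurability of the multifunctions $\omega\mapsto D^m$) the passage from joint measurability of $\hat\Phi$ to measurability of $\hat\phi=\sup_\theta\hat\Phi$ is not automatic.
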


\begin{proof}
For $\theta \in \mathbb{R}^{d_{\theta } }$, $\delta \in (0,1)$, let 
\begin{align*}
	\hat{\Phi}(\theta, \delta )
	=
	\frac{\log\|\nabla f(\theta ) \| }{\log|f(\theta ) - \hat{f} | }
	\; I_{S^{c} }(\theta )
	\; I_{(0,\delta ] }\left( |f(\theta ) - \hat{f} | \right)
	\; I_{[0,\rho ] }\left(\liminf_{n\rightarrow \infty } \|\theta - \theta_{n} \| \right)
\end{align*}
($\rho$ is specified in the definition of $\hat{Q}$, Section \ref{section1}), 
while 
\begin{align*}
	\hat{\phi}(\delta)
	=
	\sup\left\{
	1/2, \hat{\Phi}(\theta, \delta ) 
	: 
	\theta \in \mathbb{R}^{d_{\theta } }
	\right\}
	\; I_{\Lambda }
\end{align*}
($\Lambda$ is defined in Section \ref{section1*}). 
Obviously, 
$\hat{\Phi}(\theta, \delta )$ and 
$\hat{\phi}(\delta )$
are measurable random functions of 
$(\theta, \delta )$ and $\delta$
(i.e., $\hat{\Phi}(\theta, \delta )$ and 
$\hat{\phi}(\delta )$ are measurable with respect 
to $\sigma$-algebras
${\cal B}(\mathbb{R}^{d_{\theta } } ) \times {\cal B}( (0,1) ) \times {\cal F}$
and 
${\cal B}( (0,1) ) \times {\cal F}$). 
On the other side, it is straightforward to verify that 
\begin{align*}
	\hat{\delta}
	=
	\sup\{\varepsilon\, \delta : \delta \in (0,1), \hat{\phi}(\delta ) < 1 \}
\end{align*}
and $\hat{\mu} = 1/\hat{\phi}(\hat{\delta} )$
on $\Lambda$. 
Then, it is clear that $\hat{\delta}$, $\hat{\mu}$, $\hat{M}$
are random variables in probability space $(\Omega, {\cal F}, P )$.
\end{proof}

\end{document}